\numberwithin{equation}{section}
\newtheorem{theorem}{Theorem}[section]
\newtheorem{cor}[theorem]{Corollary}
\newtheorem{lemma}[theorem]{Lemma}
\newtheorem{proposition}[theorem]{Proposition}
\newtheorem{definition}[theorem]{Definition}
\newtheorem{remark}[theorem]{Remark}
\newtheorem{conj}[theorem]{Conjecture}
\newtheorem{example}[theorem]{Example}
\numberwithin{equation}{section}
\def\beq{\begin{equation}}
\def\eeq{\end{equation}}
\def\ben{\begin{enumerate}}
\def\een{\end{enumerate}}
\def\cA{{\mathcal A}}
\def\cE{{\mathcal E}}
\def\cG{{\mathcal G}}
\def\cH{{\mathcal H}}
\def\cI{{\mathcal I}}
\def\cJ{{\mathcal J}}
\def\cL{{\mathcal L}}
\def\cN{{\mathcal N}}
\def\cR{{\mathcal R}}
\def\cT{{\mathcal T}}
\def\cU{{\mathcal U}}
\def\cV{{\mathcal V}}
\def\cW{{\mathcal W}}
\def\cY{{\mathcal Y}}
\def\cZ{{\mathcal Z}}
\def\cW{\mathcal W}
\newcommand{\CC}{{\mathbb C}}
\newcommand{\KK}{{\mathbb K}}
\newcommand{\NN}{{\mathbb N}}
\newcommand{\fR}{{\mathfrak R}}
\newcommand{\mtm}{m\times m}
\newcommand{\ntn}{n\times n}
\newcommand{\mtn}{m\times n}
\newcommand{\ntm}{n\times m}
\newcommand{\sts}{s\times s}
\newcommand{\LTL}{L\times L}
\newcommand{\wt}{\widetilde}
\newcommand{\ul}{\underline}
\newcommand{\bB}{\mathbf{B}}
\newcommand{\bT}{\mathbf{T}}
\newcommand{\bA}{\mathbf{A}}
\newcommand{\fC}{\mathfrak{C}}
\newcommand{\fc}{\mathfrak{c}}
\newcommand{\fo}{\mathfrak{o}}
\newcommand{\fO}{\mathfrak{O}}
\newcommand{\fB}{\mathfrak{B}}
\newcommand{\fA}{\mathfrak{A}}
\newcommand{\fX}{\mathfrak{X}}
\newcommand{\fY}{\mathfrak{Y}}
\newcommand{\fZ}{\mathfrak{Z}}
\newcommand{\fa}{\mathfrak{a}}
\newcommand{\fr}{\mathfrak{r}}
\newcommand{\fS}{\mathfrak{S}}
\newcommand{\fb}{\mathfrak{b}}
\newcommand{\fM}{\mathfrak{M}}
\DeclareMathOperator{\Ima}{Im}
\def\moverlay{\mathpalette\mov@rlay}
\def\mov@rlay#1#2{\leavevmode\vtop{
    \baselineskip\z@skip \lineskiplimit-\maxdimen
    \ialign{\hfil$#1##$\hfil\cr#2\crcr}}}
\newcommand{\plangle}{\moverlay{(\cr<}}
\newcommand{\prangle}{\moverlay{)\cr>}}
\newcommand{\introthmname}{}
\newtheorem{introthminn}{\introthmname}
\newenvironment{introthm}[1]
  {\renewcommand{\introthmname}{#1}\begin{introthminn}}
  {\end{introthminn}}
\begin{document}

\title[The lost-abbey conditions]{Realizations of Non-Commutative Rational Functions Around a Matrix Centre, II:
The Lost-Abbey Conditions}
\thanks{The research of both authors was 
partially supported by the US--Israel 
Binational Science Foundation (BSF) 
Grant No. 2010432, Deutsche Forschungsgemeinschaft (DFG) 
Grant No. SCHW 1723/1-1, 
and Israel Science Foundation (ISF) 
Grant No. 2123/17.}
\author[M. Porat]{Motke Porat}
\address{Department of Mathematics\\
Ben-Gurion University of the Negev\\
P.O. Box 653,
Beer-Sheva 84105\\ Israel}
\email{motpor@gmail.com}
\author[V. Vinnikov]{Victor Vinnikov}
\address{Department of Mathematics\\
Ben-Gurion University of the Negev\\
P.O. Box 653,
Beer-Sheva 84105\\ Israel}
\email{vinnikov@math.bgu.ac.il}
\begin{abstract}
In a previous paper the authors 
generalized classical results of minimal 
realizations of non-commutative (nc) rational functions, 
using  nc
Fornasini--Marchesini
realizations which are centred at an arbitrary matrix point. 
In particular, it was proved that the domain of regularity of a nc 
rational function is contained in  the invertibility 
set of a corresponding pencil of any minimal realization 
of the function.
In this paper we prove an equality between the domain of a nc rational function and the domain of any of its minimal realizations. 
As for evaluations over stably finite algebras, we show that the domain of the realization w.r.t any such algebra coincides with the so called matrix domain of the function w.r.t the algebra. As a corollary we show that the domain of regularity and the stable extended domain coincide.
In contrary to both the classical case and the scalar case --- where every matrix coefficients which satisfy the controllability and observability conditions can appear in a minimal realization of a nc rational function --- the matrix coefficients in our case have to satisfy certain equations, called linearized lost-abbey conditions, which are related to Taylor--Taylor expansions in nc function theory.
\end{abstract}
\maketitle
\tableofcontents

\section*{Introduction}
Non-commutative (nc, for short) rational functions 
are a skew field of fractions
--- more precisely, the universal skew field of fractions --- 
of the
ring of nc polynomials, i.e., polynomials in 
noncommuting indeterminates (the free associative algebra).
Essentially, they are obtained
by starting with nc polynomials and applying
successive arithmetic operations;
a considerable amount of technical details is necessary here since
in contrast to the commutative case there is no canonical coprime
fraction representation for a nc rational function. NC rational functions originated from several sources: the general theory of free rings and of skew fields
(see 
\cite{Co61,Hu70,Co71a,Co72,Le74,Linn93,Lich00},
\cite{Co71,Co06,Co95} 
for comprehensive expositions, and
\cite{R99,Linn06} 
for good surveys); 
the theory of rings with
rational identities (see 
\cite{AM66}, 
also 
\cite{Be70} and
\cite[Chapter 8]{Row80}); 
and rational former power series in the
theory of formal languages and finite automata (see
\cite{Kle,Schutz61,Schutz62b,Fliess70,Fliess74a,Fliess74b} 
and
\cite{BR} for a good survey). 

Much like in the case of rational functions of a single variable 
\cite{BGK,KAF}
(and unlike the case of several commuting variables 
\cite{G1,Kac1}),
nc rational functions that are regular at 
$\ul{0}$
admit a good state space realization theory, 
see in particular Theorem 
\ref{thm:realization} below.
This was first established in the context of finite automata and recognizable power series,
and more recently reformulated, with additional details, in the context of transfer functions of multidimensional systems with evolution
along the free monoid 
(see 
\cite{BV1,BGM1,BGM2,BGM3,AK0,BK-V}).
State space realizations of nc rational functions have figured
prominently in work on robust control of linear systems subjected
to structured possibly time-varying uncertainty (see 
\cite{Beck,
BeckDoyleGlover, LuZhouDoyle}). 
Another important application of nc rational functions
appears in the area of Linear Matrix Inequalities (LMIs, see,
e.g., 
\cite{NN,N06,SIG97}). 
Most optimization problems of system theory and
control are dimensionless in the sense that the natural variables
are matrices, and the problem involves nc rational expressions in these matrix
variables which have therefore the same form independent of matrix sizes (see
\cite{AK,CHSY,H1,H3,HM}).
State space realizations are exactly what is
needed to convert (numerically unmanageable) rational matrix
inequalities into (highly manageable) linear matrix inequalities
(see \cite{HKMV,HM2,HMcCV}). 
 
 Coming from a different direction, the method of state space realizations,
also known as the linearization trick, found important recent applications in free
probability, see 
\cite{BMS,HMS,Sp1,Sp3}. 
Here it is crucial to evaluate nc rational expressions
on a general algebra --- which is stably finite in many important cases ---
rather than on matrices of all sizes. Stably finite algebras appeared in this context in the work of Cohn
\cite{Co06} and they play an important and not surprising role in our analysis.

Here is  a full characterization of nc rational functions which are regular at $\ul{0}$
and their (matrix) domains of regularity, 
in terms of their minimal realizations $\nu$
(for the proofs, see 
\cite{BGM1,BGM2,Fliess70,Fliess74a,Fliess74b,KV3,KV4,Kle}).
\begin{introthm}{Theorem}
\label{thm:realization}
If 
$\fR$
is a  nc rational function of 
$x_1,\ldots,x_d$ 
and 
$\fR$ 
is regular at 
$\ul{0}$, 
then
$\fR$ 
admits a unique (up to unique similarity) 
minimal nc Fornasini--Marchesini realization
\begin{equation*}
\fR(x_1,\ldots,x_d)=D+C
\Big(I_L-
\sum_{k=1}^d A_kx_k
\Big)^{-1}
\sum_{k=1}^d B_kx_k,
\end{equation*}
where 
$A_1,\ldots,A_d\in\KK^{L\times L} 
,B_1,\ldots,B_d\in\KK^{L\times 1}, 
C\in\KK^{1\times L},D=\fR(\ul{0})\in\KK$ 
and 
$L\in\NN$. 
Moreover, for all 
$m\in\NN:$ 
$\ul{X}=(X_1,\ldots,X_d)\in(\KK^{\mtm})^d$ 
is in the domain of regularity of 
$\fR$ if and only if  
$\det\left(I_{Lm}-X_1\otimes A_1-\ldots-
X_d\otimes A_d\right)\ne0;$ in that case
\begin{equation*}
\fR(\ul{X})=
I_m\otimes D+(I_m\otimes C)
\Big( I_{Lm}-\sum_{k=1}^d
X_k\otimes A_k\Big)^{-1}\sum_{k=1}^d
X_k\otimes B_k.
\end{equation*}
\end{introthm}
Here a realization is called minimal if the state space dimension 
$L$ 
is as small as possible; this is equivalent to   
the realization being observable, i.e., 
$$\bigcap_{0\le k}
\bigcap_{\,1\le i_1,\ldots,i_k\le d}\ker(CA_{i_1}\cdots A_{i_k})
=\{\ul{0}\},$$ and controllable, i.e.,
$$\bigvee_{0\le k}
\bigvee_{\,1\le i_1,\ldots, i_k,j\le d,} A_{i_1}\cdots A_{i_k}B_j=\KK^L.$$
Theorem 
\ref{thm:realization} 
is strongly related to  
expansions of nc rational functions which are regular at 
$\ul{0}$ 
into formal nc power series around 
$\ul{0}$; 
that is why it is \textbf{not} 
applicable for all nc rational functions. 
For example, the nc rational expression
$R(x_1,x_2)=(x_1x_2-x_2x_1)^{-1}$
is not defined at $\ul{0}$, nor at any 
pair
$(y_1,y_2)\in\KK^2$, 
therefore one can not consider 
realizations of 
$R$
which are centred at 
$\ul{0}$ 
as in Theorem 
\ref{thm:realization}, nor at any scalar point 
(a tuple of scalars). A realization theory for such 
expressions (and hence functions) 
is required  in particular for all of the applications 
mentioned above.  Such a theory is 
presented in our first paper 
\cite{PV1} and continues here, using the ideas of the general 
theory of nc functions. Other types of realizations of 
nc rational functions that are 
not necessarily regular at 
$\ul{0}$ have been considered (see 
\cite{CR1,CR2}, 
\cite{V2}, 
and also the recent papers
\cite{Schr1,Schr2,Schr3,Schr4}).

The theory of \textbf{nc functions} has its roots in the works by Taylor \cite{T1,T2} on noncommutative
spectral theory. 
It was further developed by Voiculescu 
\cite{VDN,Voi04,Voi09} 
and Kalyuzhnyi-Verbovetskyi--Vinnikov \cite{KV1},
including a detailed discussion on nc difference-differential calculus.
The main underlying idea is that
a function of 
$d$ 
non-commuting variables is a function of 
$d-$tuples of square matrices of all sizes
that respects direct sums and simultaneous similarities.
See also
the work of Helton--Klep--McCullough 
\cite{HKMcC1,HKMcC2}, 
of Popescu 
\cite{Po06,Po10}, 
of Muhly--Solel 
\cite{MS}, and of Agler--McCarthy 
\cite{AgMcC1,AgMcC2,AgMcC6}.

A crucial fact 
\cite[Chapters 4-7]{KV1} 
is that nc functions admit power series expansions,
called Taylor--Taylor series in honor of 
Brook Taylor and of Joseph L. Taylor,
around an arbitrary
matrix point in their domain.
However, a main difference between the scalar and the non-scalar centre cases, is that the coefficients in the Taylor--Taylor series are not arbitrary and must satisfy some compatibility conditions, called the lost-abbey conditions
(see 
\cite[equations (4.14)-(4.17)]{KV1}, 
or equations
$(\ref{eq:7May18a})-(\ref{eq:7May18c}$) below). 
This motivates us to generalize realizations 
as in Theorem 
\ref{thm:realization} above,
to the
case where the centre is a 
$d-$tuple of matrices
rather than 
$\ul{0}$ 
or a 
$d-$tuple of scalars.
We refer also to
\cite{KVV}
for a recent application of the lost-abbey conditions for the study of germs of nc functions. 
\\\\
This is the second in a series of papers with the goal
of generalizing the theory of Fornasini--Marchesini realizations centred at $\ul{0}$ (or any other scalar point), to the case of Fornasini--Marchesini realizations centred at an arbitrary matrix point in the domain of regularity of a nc rational function. 
In the first paper of the series 
(\cite{PV1}),
we proved the following main result:
\begin{introthm}{Theorem}[\cite{PV1}, Corollary 2.18 and Theorem 3.3]
\label{theorem5}
If $\fR$ 
is a nc rational function of 
$x_1,\ldots,x_d$ 
over 
$\KK$, 
then for every 
$\ul{Y}=(Y_1,\ldots,Y_d)\in dom_s(\fR)$ 
there exists a unique 
(up to unique similarity) minimal (observable and controllable) 
nc Fornasini--Marchesini realization
\begin{align*}
\cR(\ul{X})= D+C
\Big(I_{L}-\sum_{k=1}^d 
\bA_k(X_k-Y_k)\Big)^{-1}
\sum_{k=1}^d 
\bB_k(X_k- Y_k)
\end{align*}
centred at 
$\ul{Y}$, 
such that
\begin{multline*}
dom_{sm}(\fR)\subseteq 
\Omega_{sm}(\cR):=\Big\{ \ul{X}\in
(\KK^{sm\times sm})^d: 
\\ \det\Big( I_{Lm}-\sum_{k=1}^d
(X_k-I_m\otimes Y_k)\bA_k\Big)\ne0\Big\}
\end{multline*}
and 
\begin{multline*}
\fR(\ul{X})=\cR(\ul{X})=I_m\otimes D+(I_m\otimes C)
\Big(I_{Lm}-\sum_{k=1}^d 
(X_k-I_m\otimes Y_k)\bA_k\Big)^{-1} \\
\sum_{k=1}^d (X_k-I_m\otimes Y_k)\bB_k
\end{multline*}
for every 
$\ul{X}=(X_1,\ldots,X_d)\in dom_{sm}(\fR)$
and 
$m\in\NN$.
\end{introthm} 
Here $C,D$
are matrices of appropriate sizes, while $\bA_1\ldots,\bA_d,
\bB_1,\ldots,\bB_d$
are linear mappings from 
$\KK^{\sts}$
to matrices of appropriate sizes, 
see Subsection
\ref{subsec:prelimFirstPaper} 
below for the exact description.
 
This is a partial generalization 
of Theorem 
\ref{thm:realization},
as it only shows the inclusion of the domain of a nc rational function 
in the domain of a minimal nc Fornasini--Marchesini realization that the
function admits. 
The difficulty is that in contrast to Theorem 
\ref{thm:realization}, 
a minimal nc Fornasini--Marchesini
realization 
of a nc rational expression centred at a matrix point  is no longer a nc
rational expression 
by itself.
An equality between 
the two domains does hold (Theorem
\ref{thm:3Jun19a}), but the proof requires more tools from the theory of
nc functions, including their nc generalized power series expansions and
difference-differential calculus,
which are developed in this paper. 

Notice that we made a change of notations from 
\cite{PV1}, where what is denoted here by
$\Omega_{sm}(\cR)$, was denoted by $DOM_{sm}(\cR)$.
\\
\\
\textbf{\uline{Outline and Main Results:}}
In Section
\ref{sec:la} we provide a linearization of the lost-abbey 
($\cL\cA$) conditions,
in the case where the nc generalized 
power series admits a minimal nc 
Fornasini--Marchesini realization (Lemma
\ref{lem:13Dec17a}). Then we prove 
that for any nc Fornasini--Marchesini realization $\cR$ such that its coefficients
satisfy these linearized lost-abbey ($\cL-\cL\cA$)  
conditions, the evaluation of
$\cR$
is a nc function on the invertibility set 
$\Omega(\cR)$ of the corresponding pencil, 
which is an upper admissible nc set
(Theorem 
\ref{thm:19Oct17a}). 

Then, in Section
\ref{subsec:cal},
we use the fact that 
$\cR$
is a nc function on the upper admissible nc set
$\Omega(\cR)$, to 
apply the nc difference-differential calculus and to show that $\Omega(\cR)$
is similarity invariant
(Theorem 
\ref{thm:6Aug19a}), 
under the assumptions that
$\cR$ is controllable and observable, 
and its coefficients satisfy the $\cL-\cL\cA$ 
conditions.
Some formulas such as
(\ref{eq:16Jul19b}), that appear in the proof of Lemma 
\ref{lem:12Ap17a} 
might be of separate interest for applications in free analysis.
 
Finally, in Section \ref{sec:mainR}
we show that if a realization is controllable and observable, and its coefficients
satisfy the $\cL-\cL\cA$ conditions, 
then the realization is actually the restriction of a nc rational
function (Theorem 
\ref{thm:26Jun19a}). In the proof of Theorem 
\ref{thm:26Jun19a} 
we make an extensive use of the results from Sections 
\ref{sec:la}
and
\ref{subsec:cal}, 
as well as of 
\cite[Lemma 3.9]{V1}
(cf. Lemma
\ref{lem:25Jun19a}).

As a corollary of Theorem
\ref{thm:26Jun19a}, 
we prove one of the main results in the paper, that is that the domain of
a nc rational function
\textbf{coincides} with --- and not only contains as in 
Theorem \ref{theorem5} ---
the domain of any of its minimal realizations, centred
at
an arbitrary matrix point, which allows us to evaluate 
the nc rational function on \textbf{all} of its domain using the evaluation
of the realization. The corresponding result when evaluating over stably finite algebras 
is given too, 
while
one has to modify our definition of the $\cA-$domain of the function and
consider its $\cA-$matrix domain. 
\begin{introthm}{Theorem}[Theorem
\ref{thm:3Jun19a},
Theorem
\ref{thm:10Sep19b}]
\label{theorem2}
If 
$\fR$ 
is a nc rational function of 
$x_1,\ldots,x_d$ 
over 
$\KK$, 
then for every 
$\ul{Y}=(Y_1,\ldots,Y_d)\in dom_s(\fR)$, 
there exists a unique 
minimal nc Fornasini--Marchesini realization
$\cR$
centred at 
$\ul{Y}$, 
such that 
$dom_{sm}(\fR)=
\Omega_{sm}(\cR)$
for every 
$m\in\NN$ 
and
\begin{multline*}
\fR(\ul{X})
=I_m\otimes D+(I_m\otimes C)
\Big(I_{Lm}-\sum_{k=1}^d 
(X_k-I_m\otimes Y_k)\bA_k\Big)^{-1} \\
\sum_{k=1}^d (X_k-I_m\otimes Y_k)\bB_k
\end{multline*}
for every 
$\ul{X}\in dom_{sm}(\fR)$.
Moreover, for every $n\in\NN$, 
\begin{align*}
dom_n(\fR)=
\big\{\ul{X}\in (\KK^{\ntn})^d:
I_s\otimes\ul{X}\in\Omega_{sn}(\cR)
\big\}
\end{align*}
and
$I_s\otimes\fR(\ul{X})=\cR(I_s\otimes\ul{X})$
for every
$\ul{X}\in dom_n(\fR)$. Furthermore, 
\begin{align*}
dom_{\cA}^{Mat}(\fR)=
\big\{\ul{\fa}\in\cA^d:
I_s\otimes\ul{\fa}\in 
\Omega_{\cA}(\cR)
\big\}
\end{align*}
and $I_s\otimes\fR(\ul{\fa})=\cR^{\cA}(I_s\otimes\ul{\fa})$
for every
$\ul{\fa}\in dom_{\cA}^{Mat}(\fR)$,
whenever $\cA$
ia a stably finite
$\KK-$unital algebra.
\end{introthm}
Here 
$\Omega_{\cA}(\cR)$
is the invertibility set of the corresponding matrix pencil over the algebra $\cA$ 
(see
(\ref{eq:12Apr20c}) 
for the definition), 
$dom_{\cA}^{Mat}(\fR)$
denotes a variation of the domain of the rational function $\fR$ over the algebra 
$\cA$ called the matrix domain (see
(\ref{eq:5Nov19b})
for the precise definition),
and an algebra is called stably finite if every square left invertible matrix over the algebra is also right invertible (see 
Definition
\ref{def:9Mar20a}).
We also present in Corollary
\ref{cor:25Mar21a}
a version of this result using realizations which are independent of a matrix centre, in the spirit of Cohn, Reutenauer and Fliess.

We finish this section by giving a full characterization of all 
the minimal nc Fornasini--Marchesini realizations of a nc rational function,
which
are centred at $\ul{Y}\in dom(\fR)$, 
using the 
$\cL-\cL\cA$ 
conditions:
\begin{introthm}{Theorem}[Theorem
\ref{thm:5Nov19a}]
\label{theorem3}
Let  
$\cR$ 
be a nc Fornasini--Marchesini realization centred at 
$\ul{Y}\in(\KK^{\sts})^d$ 
that is described by 
$(L;D,C,\ul{\bA},\ul{\bB})$, and suppose  $\cR$ is both controllable and
observable. 
The following are equivalent:
\begin{enumerate}
\item[1.]
There exists a nc rational function 
$\fR\in\KK\plangle \ul{x}\prangle$
regular at 
$\ul{Y}$, 
such that
$dom_{sm}(\fR)=\Omega_{sm}(\cR)$
and 
$\fR(\ul{X})=\cR(\ul{X})$,
for every 
$\ul{X}\in dom_{sm}(\fR)$ 
and 
$m\in\NN$.
\item[2.]
The coefficients of $\cR$
satisfy the 
$\cL-\cL\cA$ 
conditions (cf. equations 
(\ref{eq:12Jun17a})--(\ref{eq:12Jun17d})).
\end{enumerate}
\end{introthm}
There is a different notion of domain for nc rational functions,
the so called extended domain, which is based on evaluations on generic matrices.
It was discovered in
\cite{KV1} and a priori it contains the usual domain of regularity of the
function.
The extended domain by itself is problematic as it is not closed w.r.t direct
sums, but this can be fixed by introducing the stable extended domain
(see 
\cite{V1}). 

In Section
\ref{sec:stable} 
we use the crucial fact
that if the coefficients of a nc Fornasini--Marchesini realization, that
is controllable and observable, satisfy the $\cL-\cL\cA$ conditions, then
the realization defines a
nc function on an upper admissible nc set and we can 
apply to this function the
 difference-differential calculus. 
The main result ---
which is a generalization of 
\cite[Theorem 3.10]{V1}, in which the function is assumed to be regular at some
scalar point --- then
is that the stable extended domain of any nc 
rational function coincides
with its usual domain of regularity:
\begin{introthm}{Theorem}[Corollary 
\ref{cor:10Nov19b}]
For every nc rational function 
$\fR\in\KK\plangle\ul{x}\prangle$, 
we have
$$edom^{st}(\fR)=dom(\fR).$$
Moreover, for every 
$n\in\NN$ 
we have
$$edom^{st}_n(\fR)
=dom_n(\fR)=
\big\{\ul{X}\in(\KK^{\ntn})^d:I_s\otimes
\ul{X}\in\Omega_{sn}(\cR)\big\},$$
whenever 
$\cR$ 
is a minimal realization of 
$\fR$, 
centred at a point from
$dom_s(\fR)$.
\end{introthm}
In the next paper
\cite{PV3}, 
we will  
use the theory of 
realizations with a matrix centre developed in
\cite{PV1}
and in the present paper, 
to
characterize explicitly the ring of 
rational nc generalized power series with matrix centre 
$\ul{Y}$.
More precisely, we will establish a 
generalization of the Fliess-Kronecker 
theorem in which the characterization is 
given in terms of two conditions: the 
$\cL\cA$ conditions 
and a finiteness condition 
on the rank of an infinite Hankel matrix.

As part of the tools 
we will construct a functional model and use it 
to provide a different one step proof 
(based on Hankel realizations, see
\cite{KAF} for the classical case and
\cite{BGM1} for the scalar nc case) 
for the existence of a (minimal) 
realization formula for nc rational functions, 
without using synthesis. 
Furthermore, we present
an explicit construction of the free skew field
$\KK\plangle\ul{x}\prangle$,
with a self-contained proof that it 
is the universal skew field of fractions of 
the ring of nc polynomials. 
\\\\
\textbf{Acknowledgments.}
The authors would like to thank
Dmitry Kalyuzhnyi-Verbovetskyi, Roland Speicher, and Juri Vol\v{c}i\v{c}
for their helpful comments and discussions. 
A special gratitude is due to Igor Klep and Juri Vol\v{c}i\v{c}
for the details of
Example
\ref{ex:10Nov19a}. 
The idea of developing 
realization theory around 
a matrix point, 
along the lines presented here,
was first explored by the 
second author at
a talk at MFO workshop on free 
probability theory in 2015 
(\cite{KV2}); 
the second author would like to 
thank MFO and the workshop 
organizers for their hospitality. 
Finally, we would like to thank the anonymous referee for helpful comment.

\section{Preliminaries}
\label{sec:preliminaries}
\textbf{Notations:} 
$d$ 
will stand for the number of noncommuting variables, 
which will be usually denoted by
$x_1,\ldots,x_d$, we often abbreviate non-commuting by nc.
For a positive integer 
$d$, 
we denote by 
$\cG_d$ 
the free monoid generated by 
$d$ 
generators 
$g_1,\ldots,g_d$,
we say that a word 
$\omega=g_{i_1}\ldots g_{i_\ell}\in\cG_d$ 
is of length 
$|\omega|=\ell$
if $\ell\ge1$ and 
$\omega=\emptyset$ 
is of length 
$0$.
For a field
$\KK$ 
and 
$n\in\NN$, let 
$\KK^{\ntn}$ 
be the vector space of 
$\ntn$ 
matrices over 
$\KK$, 
let 
$\{\ul{e}_1,\ldots,\ul{e}_n\}$ 
be the standard basis of 
$\KK^n$ 
and let 
$\cE_n=\left\{ E_{ij}=\ul{e}_i\ul{e}_j^T: 
1\le i,j\le n\right\}$ 
be the standard basis of 
$\KK^{\ntn}$.
The tensor (Kronecker) product of two matrices 
$P\in\KK^{n_1\times n_2}$
and
$Q\in\KK^{n_3\times n_4}$ 
is  the 
$n_1n_3\times n_2n_4$ 
block matrix 
$P\otimes Q=
\big[p_{ij}Q
\big]_{1\le i\le n_1,1\le j\le n_2}.$  
The range of a matrix 
$P$, 
that is the span of all of its columns, denoted by 
$\Ima(P)$.
For any two square matrices 
$X$ 
and 
$Y$ of the same size, their commutator is defined by $[X,Y]=XY-YX$.

We denote operators on matrices by bold letters such as 
$\bA,\bB$, 
and the action of 
$\bA$ 
on 
$X$ by 
$\bA(X)$. If $\bA$ is defined on 
$\sts$ matrices we extend 
$\bA$ to act on
$sm\times sm$ 
matrices for any 
$m\in\NN$, 
by viewing an
$sm\times sm$ 
matrix 
$X$ 
as an 
$\mtm$ 
matrix with 
$\sts$ 
blocks and by evaluating
$\bA$ 
on the 
$\sts$ 
blocks (cf. equation
(\ref{eq:26May20a})); 
in that case we denote the evaluation by 
$(X)\bA$. 
If 
$C$ 
is a constant matrix and 
$\bA$ 
is an operator, then 
$C\cdot\bA$ 
and 
$\bA\cdot C$ 
are two operators, defined by  
$(C\cdot\bA)(X):=C\bA(X)$ 
and 
$(\bA\cdot C)(X):=\bA(X) C$.
For every 
$n_1,n_2\in\NN$, we define the permutation matrix
$$E(n_1,n_2)=
\big[
E_{ij}^T
\big]_{1\le i\le n_1,1\le j\le n_2}\in\KK^{n_1n_2
\times n_1n_2}$$
and use these matrices to change the order of factors in the Kronecker product of two matrices by the following rule 
\begin{align}
\label{eq:24Jan19a}
P\otimes Q=E(n_1,n_3)(Q\otimes P)E(n_2,n_4)^T,
\end{align}
for all
$n_1,n_2,n_3,n_4\in\NN,\,
Q\in\KK^{n_1\times n_2},$
and
$P\in\KK^{n_3\times n_4}$;
for more details
see 
\cite[pp. 259--261]{HJ}. 
If 
$P=
\big[
P_{ij}\big]_{1\le i,j\le m},Q=
\big[
Q_{ij}\big]_{1\le i,j\le m}
\in(\KK^{\sts})^{\mtm}$, then we use the notation
$$P\odot_s Q:=
\Big[
\sum_{k=1}^m P_{ik}\otimes Q_{kj}\Big]_{1\le i,j\le m}$$
for the so-called faux product of 
$P$ and $Q$,
viewed as
$\mtm$ matrix over the tensor algebra of 
$\KK^{\sts}$, 
see 
\cite{Ef} 
for its origins in operator spaces. 
If 
$\ul{X}=(X_1,\ldots, X_d)\in(\KK^{sm\times sm})^d$ 
and 
$\omega=g_{i_1}\ldots g_{i_{\ell}}\in\cG_d$, 
then
\begin{align}
\label{eq:21Mar19a}
\ul{X}^{\odot_s\omega}:=X_{i_1}\odot_s\cdots\odot_s X_{i_{\ell}}.
\end{align}
Recall that 
$\ul{X}$ 
is called jointly nilpotent, if there exists 
$\kappa\in\NN$ such that
$\ul{X}^{\odot_s\omega}=0$ for every $\omega\in\cG_d$ satisfying $|\omega|\ge\kappa$.
For every matrix
$Z$ over
$\KK$
and a unital
$\KK-$algebra
$\cA$, we 
use the notation
$Z_{\cA}:=Z\otimes \bf{1}_{\cA}$, 
where
$\bf{1}_{\cA}$
is the unit element in 
$\cA$. 
Notice that if 
$\ul{X}^{i,j}=(X^{i,j}_1,
\ldots,X^{i,j}_d)\in\big(\KK^{n_i\times n_j}\big)^d$ 
for 
$1\le i,j\le 2$, we use the notation
\begin{align*}
\begin{pmatrix}
\ul{X}^{1,1}&\ul{X}^{1,2}\\
\ul{X}^{2,1}&\ul{X}^{2,2}\\
\end{pmatrix}:=
\left(\begin{bmatrix}
X^{1,1}_1&X^{1,2}_1\\
X^{2,1}_1&X^{2,2}_1\\
\end{bmatrix},\ldots,
\begin{bmatrix}
X^{1,1}_d&X^{1,2}_d\\
X^{2,1}_d&X^{2,2}_d\\
\end{bmatrix}\right)
\in\Big(\KK^{(n_1+n_2)\times(n_1+n_2)}\Big)^d.
\end{align*}
We use
$\fR,\cR,R$, and
$\fr$ 
for nc rational function, nc 
Fornasini--Marchesini realization, nc
rational expression, and matrix valued 
nc rational function, respectively.
Likewise, we use 
$\fa$ 
to denote elements in an algebra 
$\cA$
and
$\fA$ 
to denote matrices over 
$\cA$. 
All over the paper, we use underline 
to denote vectors or 
$d-$tuples.
\subsection{NC rational expressions \& functions}
\label{sec:NCREF} 
Let 
$\KK$ 
be a field, 
$d\ge1$ 
an integer, and 
$x_1,\ldots,x_d$ 
noncommuting variables. 
 We denote by
$\KK\langle x_1,\ldots,x_d\rangle$ 
the 
$\KK-$algebra of nc polynomials in the 
$d$ 
nc variables 
$x_1,\ldots,x_d$ 
over 
$\KK$.
We obtain nc rational expressions by applying 
successive arithmetic operations 
(addition, multiplication and taking inverse)
on 
$\KK\langle x_1,\ldots,x_d\rangle$.

For a nc rational expression 
$R$ 
and 
$n\in\NN$, 
the set 
$dom_n(R)$
consists of all 
$d-$tuples of 
$\ntn$ 
matrices over 
$\KK$ 
for which all the inverses in 
$R$ 
exist. 
The \textbf{domain of regularity} of 
$R$ 
is then defined by
\begin{align*}
dom(R):=
\coprod_{n=1}^\infty dom_n(R).
\end{align*}
For example, 
$R(x_1,x_2)=(x_1x_2-x_2x_1)^{-1}$
is a nc rational expression in 
$x_1,x_2$, with 
$dom_1(R)=\emptyset$ 
and 
$dom_n(R)=\big\{ (X_1,X_2)\in(\KK^{\ntn})^2:
\det(X_1X_2-X_2X_1)\ne0\big\}\ne\emptyset$
for any $n>1$.

A nc rational expression 
$R$
is called \textbf{non-degenerate} if
$dom(R)\ne\emptyset$ 
(for instance the nc rational expression 
$(x_1-x_1)^{-1}$ is degenerate).
Let 
$R_1$ 
and 
$R_2$ 
be nc rational expressions in 
$x_1,\ldots,x_d$ 
over 
$\KK$. 
We say that 
$R_1$ 
and 
$R_2$ 
are 
$(\KK^d)_{nc}-$\textbf{evaluation equivalent},
if
$R_1(\ul{X})=R_2(\ul{X})$
for every 
$\ul{X}\in 
dom(R_1)\cap dom(R_2)$.

For example the nc rational expressions 
$R_1(x_1,x_2)=x_2(x_1x_2)^{-1}x_1+x_1x_2,\,
R_2(x_1,x_2)=x_2^{-1}(x_2+x_2x_1x_2)$
and
$R_3(x_1,x_2)=1+x_1x_2$,
are 
$(\KK^3)_{nc}-$evaluation equivalent. 
\begin{definition}[NC Rational Functions]
\label{def:21Apr20c} 
A \textbf{nc rational function} 
of 
$x_1,\ldots,x_d$ 
over 
$\KK$
is an 
equivalence class of non-degenerate nc 
rational expressions, w.r.t the 
$(\KK^d)_{nc}-$evaluation equivalence relation. 
For every nc rational function 
$\fR$ of $x_1,\ldots,x_d$, 
define its 
\textbf{domain of regularity}
\begin{align}
\label{eq:9Mar19a}
dom(\fR):=\bigcup_{R\in\fR} dom(R).
\end{align}
For every 
$\ul{X}\in dom(\fR)$, 
we say that $\fR$
is \textbf{regular} at 
$\ul{X}$ and
its evaluation is given by 
$\fR(\ul{X})=R(\ul{X})$
for every nc rational expression  
$R\in\fR$ 
such that 
$\ul{X}\in dom(R)$.
\end{definition}
The 
$\KK-$algebra of all nc rational functions of 
$x_1,\ldots,x_d$ 
over 
$\KK$ is denoted by 
$\KK\plangle x_1,\ldots,x_d\prangle$ 
and it is a skew field, called the 
\textbf{free 
skew field}. Moreover,
$\KK\plangle x_1,\ldots, x_d\prangle$  
is the universal skew field of fractions of 
$\KK\langle x_1,\ldots,x_d\rangle$. 
See 
\cite{AM66,Be70,Co71a,Co72,Row80} 
for the original proofs and  
\cite{Co06} 
for a more modern reference, 
while a proof of the equivalence with the 
evaluations over matrices is presented in 
\cite{KV3,KV4}. 
\\\\
\textbf{$\cA-$Domains and Evaluations.}
Let
$\cA$ be a unital 
$\KK-$algebra. If
$\ul{\fa}=(\fa_1,\ldots,\fa_d)\in\cA^d$ 
and 
$\omega=g_{i_1}\ldots g_{i_\ell}\in\cG_d$, 
then we use the notations 
$\ul{\fa}^\omega:=\fa_{i_1}\cdots 
\fa_{i_\ell}$ 
and
$\ul{\fa}^{\emptyset}=\bf{1}_{\cA}$, 
where 
$\bf{1}_{\cA}$ 
is the unit element in 
$\cA$. 
Evaluations and 
domains of nc rational expressions over 
$\cA$ are defined in a natural way, see 
\cite{HMS}
or
\cite[subsection 1.2]{PV1}  
for more details.
We are interested in a certain family of algebras, called stably finite algebras, also called weakly finite in
\cite{Co06}. 
\begin{definition}
\label{def:9Mar20a}
A unital 
$\KK-$algebra 
$\cA$ 
is called 
\textbf{stably finite} 
if for every 
$m\in\NN$ 
and 
$\fA,\fB\in\cA^{\mtm}$, we have
$\fA\fB=I_m\otimes 
{\bf{1}}_{\cA}$
if and only if
$\fB\fA=I_m\otimes 
\bf{1}_{\cA}$.
\end{definition}
\subsection{Realization theory around a matrix centre}
\label{subsec:prelimFirstPaper}
We summarize now the main definitions and 
main results from 
\cite{PV1} about realizations of nc rational 
expressions and nc rational functions 
around a matrix centre. 
Notice we denote here by 
$\Omega_{sm}(\cR)$ 
what was denoted in 
\cite{PV1} 
by 
$DOM_{sm}(\cR)$. 
\begin{definition}[\cite{PV1}, Definition 2.1]
\label{def:13Apr19a}
Let 
$s\in\NN,\,
L\in\NN,\,
\ul{Y}=(Y_1,\ldots,Y_d)\in(\KK^{\sts})^d,\,
\bA_1,\ldots,\bA_d:\KK^{\sts}
\rightarrow\KK^{L\times L}$
and
$\bB_1,\ldots,\bB_d:\KK^{\sts}
\rightarrow\KK^{L\times s}$
be linear mappings, 
$C\in\KK^{s\times L}$, 
and
$D\in\KK^{\sts}$. 
Then
\begin{equation}
\label{eq:18Aug18a}
\cR(X_1,\ldots,X_d)= 
D+C\Big(I_{L}-\sum_{k=1}^d 
\bA_k(X_k-Y_k)\Big)^{-1}
\sum_{k=1}^d 
\bB_k(X_k- Y_k)
\end{equation}
is called a 
\textbf{nc Fornasini--Marchesini realization} 
centred at 
$\ul{Y}$
and it is defined for every 
$\ul{X}=(X_1,\ldots,X_d)\in \Omega_s(\cR)$,
where
\begin{equation*}
\Omega_s(\cR):=\Big\{ 
\ul{X}\in(\KK^{\sts})^d:\det
\Big(
I_L-\sum_{k=1}^d
\bA_k(X_k-Y_k)
\Big)\ne0
\Big\}.
\end{equation*}
\end{definition}
In that case we say that the realization 
$\cR$ 
is \textbf{described by} 
$(L;D,C,\ul{\bA},\ul{\bB})$ 
and the corresponding generalized 
linear pencil centred at 
$\ul{Y}$ is
\begin{align}
\label{eq:9Mar20c}
\Lambda_{\ul{\bA},\ul{Y}}(\ul{X}):=I_L-\sum_{k=1}^d
\bA_k(X_k-Y_k).
\end{align}
Let
$s_1,s_2,s_3,s_4\in\NN$. 
If 
$\bT:\KK^{s_1\times s_2}
\rightarrow
\KK^{s_3\times s_4}$ 
is a linear mapping and 
$m\in\NN$, 
then 
$\bT$ 
can be naturally extended to a linear mapping 
$\bT:\KK^{s_1m\times s_2m}
\rightarrow
\KK^{s_3m\times s_4m}$, 
by the following rule:
\begin{equation}  
\label{eq:26May20a}
X=
\big[
X_{ij}\big]_{1\le i,j\le m}
\in\KK^{s_1m\times s_2m}
\Longrightarrow
(X)\bT=
\big[
\bT(X_{ij})
\big]_{1\le i,j\le m},
\end{equation}
i.e., 
$(X)\bT$  
is an 
$\mtm$ 
block matrix with entries in 
$\KK^{s_3\times s_4}$.
Therefore, we can
extend the pencil 
$\Lambda_{\ul{\bA},\ul{Y}}$ 
to act on 
$d-$tuples of $sm\times sm$ matrices,
by
$$\Lambda_{\ul{\bA},\ul{Y}}
(\ul{X})=
I_{Lm}-\sum_{k=1}^d 
(X_k-I_m\otimes Y_k)\bA_k,\, 
\forall\ul{X}=(X_1,\ldots,X_d)
\in(\KK^{sm\times sm})^d,$$
hence we
 extend the realization
(\ref{eq:18Aug18a}) 
to act on 
$d-$tuples of 
$sm\times sm$ 
matrices: for every 
\begin{equation*}
\ul{X}=(X_1,\ldots,X_d)\in
\Omega_{sm}(\cR):=
\Big\{\ul{X}
\in(\KK^{sm\times sm})^d:
\det\big(\Lambda_{\ul{\bA},\ul{Y}}(\ul{X})\big)\ne0
\Big\},
\end{equation*}
we define
\begin{equation*}
\cR(\ul{X}):=
I_m\otimes D+
(I_m\otimes C)
\Lambda_{\ul{\bA},\ul{Y}}(\ul{X})^{-1}
\sum_{k=1}^d 
(X_k-I_m\otimes Y_k)\bB_k.
\end{equation*}
In addition, if
$\cA$
is a unital 
$\KK-$algebra, a linear mapping
$\bT:\KK^{s_1\times s_2}\rightarrow
\KK^{s_3\times s_4}$
can be also naturally extended to a linear mapping
$\bT^{\cA}:\cA^{s_1\times s_2}
\rightarrow\cA^{s_3\times s_4}$, 
by the following rule:
\begin{equation}
\label{eq:9Apr20b}
\fA
=\sum_{i=1}^{s_1}
\sum_{j=1}^{s_2} 
E_{ij}\otimes \fa_{ij}
\in\cA^{s_1\times s_2}
\Longrightarrow (\fA)\bT^{\cA}
=\sum_{i=1}^{s_1}
\sum_{j=1}^{s_2}
\bT(E_{ij})
\otimes \fa_{ij}\in
\cA^{s_3\times s_4},
\end{equation}
where
$E_{ij}=\ul{e}_i\ul{e}_j^T
\in\KK^{s_1\times s_2}$
and
$\fa_{ij}\in\cA$.
Thus we extend the pencil 
$\Lambda_{\ul{\bA},\ul{Y}}$ to act on $d-$tuples of $\sts$ matrices over $\cA$, by 
$$\Lambda_{\ul{\bA},\ul{Y}}^{\cA}
(\ul{\fA})=
I_L\otimes {\bf1}_{\cA}-\sum_{k=1}^d
(\fA_k-Y_k\otimes {\bf{1}}_{\cA})
\bA_k^{\cA},\,
\forall\ul{\fA}=(\fA_1,\ldots,
\fA_d)\in(\cA^{\sts})^d.$$
The 
\textbf{$\cA-$domain} 
of a nc Fornasini--Marchesini realization 
$\cR$ centred at 
$\ul{Y}$, 
as in 
(\ref{eq:18Aug18a}), 
is then defined to be the subset of 
$(\cA^{\sts})^d$
given by
\begin{equation}
\label{eq:12Apr20c}
\Omega_{\cA}(\cR)
:=\Big\{
\ul{\fA}\in(\cA^{\sts})^d:
\Lambda^{\cA}_{\ul{\bA},\ul{Y}}(\ul{\fA})
\text{ is invertible in }
\cA^{L\times L}\Big\}
\end{equation}
and for every 
$\ul{\fA}=(\fA_1,\ldots,\fA_d)\in 
\Omega_{\cA}(\cR)$, 
the evaluation of 
$\cR$ 
at 
$\ul{\fA}$
is defined by 
\begin{align}
\label{eq:12Apr20d}
\cR^{\cA}(\ul{\fA}):=
D\otimes {\bf1}_\cA
+(C\otimes {\bf1}_\cA)
\Lambda^{\cA}_{\ul{\bA},\ul{Y}}(\ul{\fA})^{-1}
\sum_{k=1}^d 
\big[(\fA_k)\bB_k^{\cA}-
\bB_k(Y_k)\otimes {\bf1}_\cA
\big].
\end{align}
\begin{definition}[\cite{PV1}, Definition 2.3]
\label{def:25Sep18a}
Let 
$R$ 
be a nc rational expression in
$x_1,\ldots,x_d$ 
over 
$\KK,\,\ul{Y}
\in dom_s(R)$,
$\cR$ 
be a nc Fornasini--Marchesini realization centred at
$\ul{Y}$, 
and 
$\cA$ 
be a unital
$\KK-$algebra. 
We say that:
\\1.
$R$ 
\textbf{admits the realization} 
$\cR$, 
or that 
$\cR$ 
is a realization of 
$R$, 
if 
\begin{align*}
dom_{sm}(R)\subseteq \Omega_{sm}(\cR)
\text{ and }R(\ul{X})=\cR(\ul{X}),\,
\forall \ul{X}\in dom_{sm}(R),
\, m\in\NN.
\end{align*} 
2.
$R$
\textbf{admits the realization 
$\cR$ 
w.r.t}
$\cA$,
or that
$\cR$ 
is a realization of 
$R$ 
w.r.t 
$\cA$, 
if
\begin{multline*}
dom_{\cA}(R)\subseteq
\{\ul{\fa}\in\cA^d: 
I_s\otimes 
\ul{\fa}\in\Omega_{\cA}(\cR) \}
\text{ and } 
\\ I_s\otimes R^{\cA}(\ul{\fa})
=\cR^{\cA}(I_s\otimes\ul{\fa}),\,
\forall\ul{\fa}\in dom_{\cA}(R).
\end{multline*}
\end{definition}
\begin{definition}[\cite{PV1}, Definition 2.7]
\label{def:25Jan19a}
Let
$\bA_1,\ldots,\bA_d:\KK^{\sts}
\rightarrow\KK^{L\times L}$
and
$\bB_1,\ldots,\bB_d:
\KK^{\sts}\rightarrow\KK^{L\times s}$ 
be linear mappings, and 
$C\in\KK^{s\times L}$. 
\\1. $(\ul{\bA},\ul{\bB})$ 
is called 
\textbf{controllable}, if
$$\bigvee_{\omega\in\cG_d,\,
X_1,\ldots,X_{|\omega|+1}
\in\KK^{s\times s},\,
1\le k\le d} 
\Ima
\left(\ul{\bA}^\omega
(X_1,\ldots,
X_{|\omega|})
\bB_k
(X_{|\omega|+1})
\right)=\KK^L.$$
\\2. $(C,\ul{\bA})$ 
is called \textbf{observable},
if
$$\bigcap_{\omega\in\cG_d,\,
X_1,\ldots,X_{|\omega|}
\in\KK^{s\times s}}
\ker \left(C\ul{\bA}^\omega
\left(X_1,\ldots,
X_{|\omega|}\right)\right)=\{\ul{0}\}.$$
\end{definition}
If
$\cR$
is a nc Fornasini--Marchesini realization of a nc rational expression
$R$,
that is centred at $\ul{Y}$, 
then it is said to be
\textbf{minimal}
if the dimension 
$L$ 
is the smallest integer for which 
$R$ 
admits such 
a realization, i.e., if 
$\cR^\prime$
is a nc Fornasini--Marchesini realization of 
$R$ 
centred at 
$\ul{Y}$ 
of dimension 
$L^\prime$, 
then 
$L\le L^\prime$. 
\begin{theorem}[\cite{PV1}, Theorem 2.16]
\label{thm:minObsCont}
Let 
$R$
be a nc rational expression in 
$x_1,\ldots,x_d$ 
over 
$\KK$ 
and
$\cR$
be a nc Fornasini--Marchesini realization of
$R$
centred at 
$\ul{Y}\in(\KK^{\sts})^d$.
Then 
$\cR$ 
is minimal if and only if 
$\cR$ is controllable and observable. 
\end{theorem}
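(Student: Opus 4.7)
The plan is to prove each implication separately: the forward direction (minimal $\Rightarrow$ controllable and observable) by contrapositive via invariant-subspace truncation, and the converse by matching Taylor--Taylor moments at $\ul{Y}$ together with a Hankel-rank argument.

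For the forward direction, suppose $\cR$ is not controllable, and set
$$V := \bigvee_{\omega\in\cG_d,\,X_1,\ldots,X_{|\omega|+1}\in\KK^{\sts},\,1\le k\le d}\Ima\big(\ul{\bA}^\omega(X_1,\ldots,X_{|\omega|})\bB_k(X_{|\omega|+1})\big)\subsetneq\KK^L.$$
Appending a letter to $\omega$ shows that $V$ is invariant under $\bA_j(X)$ for every $j$ and every $X\in\KK^{\sts}$, while the word $\omega=\emptyset$ gives $\Ima\bB_k(X)\subseteq V$ for all $k,X$. Choosing a basis of $\KK^L$ whose first $L':=\dim V$ vectors span $V$, the maps $\bA_k(X)$ become block upper triangular with top-left block $\widetilde{\bA}_k(X)$, and the maps $\bB_k(X)$ take the form $\binom{\widetilde{\bB}_k(X)}{0}$. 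A Schur-complement computation of the block triangular inverse of the pencil $\Lambda_{\ul{\bA},\ul{Y}}(\ul{X})$ shows that the compressed realization $\widetilde{\cR}$ described by $(L';D,\widetilde{C},\widetilde{\ul{\bA}},\widetilde{\ul{\bB}})$, with $\widetilde{C}:=C|_V$, satisfies $\widetilde{\cR}(\ul{X})=\cR(\ul{X})$ wherever both are defined, hence is a realization of $R$ of strictly smaller dimension---contradicting minimality. If instead $\cR$ is not observable, the dual construction via $W:=\bigcap_{\omega,X_i}\ker\!\big(C\,\ul{\bA}^\omega(X_1,\ldots,X_{|\omega|})\big)$ (invariant under each $\bA_k(X)$ by appending a letter inside the kernel, with $CW=0$ from $\omega=\emptyset$) yields a quotient realization on $\KK^L/W$ of smaller dimension.

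For the converse, let $\cR'$ described by $(L';D',C',\ul{\bA}',\ul{\bB}')$ be any realization of $R$ centred at $\ul{Y}$. Expanding the Neumann series of $\Lambda_{\ul{\bA},\ul{Y}}(\ul{X})^{-1}$ in a neighbourhood of $\ul{Y}$ gives
$$\cR(\ul{X})=D+\sum_{n\ge 0}\sum_{i_1,\ldots,i_n,\,k}C\,\bA_{i_1}(X_{i_1}-Y_{i_1})\cdots\bA_{i_n}(X_{i_n}-Y_{i_n})\bB_{k}(X_{k}-Y_{k}),$$
and analogously for $\cR'$. Since both expansions evaluate to $R(\ul{X})$ on a common open set, uniqueness of the Taylor--Taylor expansion together with multilinearity of each factor in its matrix argument forces the agreement of the ``moments''
$$C\,\bA_{i_1}(Z_1)\cdots\bA_{i_n}(Z_n)\bB_k(Z_{n+1})=C'\,\bA'_{i_1}(Z_1)\cdots\bA'_{i_n}(Z_n)\bB'_k(Z_{n+1})$$
for arbitrary $Z_1,\ldots,Z_{n+1}\in\KK^{\sts}$. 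Packing these moments into an infinite block Hankel operator $H$ whose rows are indexed by pairs (past word, matrix tuple input to $C\,\ul{\bA}^\alpha$) and whose columns are indexed by triples (future word, matrix tuple, index $k$), one obtains a factorization $H=\cO\cdot\cC$, where $\cO$ is the observability map and $\cC$ the controllability map of $\cR$. Injectivity of $\cO$ (observability) and surjectivity of $\cC$ (controllability) give $\operatorname{rank} H=L$, while the same $H$ also factors through $\KK^{L'}$ via $\cR'$, yielding $\operatorname{rank} H\le L'$ and hence $L\le L'$, as required.

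The main obstacle is the bookkeeping forced by the matrix centre: the coefficients $\bA_k,\bB_k$ are linear maps on $\KK^{\sts}$ rather than fixed matrices, so invariance of $V$ (resp.\ $W$) must hold simultaneously under $\bA_k(X)$ for \emph{every} matrix input $X\in\KK^{\sts}$, and the Hankel operator is indexed by pairs (word, tuple of matrix inputs) rather than merely by words. The passage from matching of the Taylor--Taylor expansion (where the $j$-th factor has argument $X_{i_j}-Y_{i_j}$, not an independent variable) to agreement of the multilinear moments on independent inputs $Z_j$ requires a polarization argument; once this is done, the rank comparison $L\le L'$ proceeds as in the classical scalar realization theory.
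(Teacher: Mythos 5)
Your proposal is correct and follows essentially the same route as the proof the paper cites from \cite{PV1} (Theorem 2.16): for minimal $\Rightarrow$ controllable and observable, compression onto the controllable subspace (respectively, quotient by the unobservable subspace), using block-triangularity of the pencil to keep the domain containment and the values; for the converse, identification of the moments of any realization with the Taylor--Taylor coefficients of $R$ at $\ul{Y}$, followed by the Hankel factorization through the observability and controllability matrices and the rank comparison $L\le L'$. The only refinement needed is that over a general field the ``Neumann expansion on a neighbourhood'' must be replaced by evaluation on the jointly nilpotent ball around $\ul{Y}$ (where the expansion terminates), which is exactly how the moments are matched with the Taylor--Taylor coefficients in \cite{PV1} (Lemma 2.12), rather than by polarization in the literal sense.
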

Let 
$\cR_1$
and
$\cR_2$
be two nc Fornasini--Marchesini realizations,
described by 
$(L_1; D^1,C^1,\ul{\bA}^1,\ul{\bB}^1)$ and
$(L_2; D^2,C^2,\ul{\bA}^2,\ul{\bB}^2)$,
respectively, both centred at 
$\ul{Y}\in(\KK^{\sts})^d$.  
Then $\cR_1$ 
and 
$\cR_2$
are said to be \textbf{uniquely similar}, if
$L_1=L_2,\,D^1=D^2$,
and there exists a unique invertible matrix 
$T\in\KK^{L_1\times L_1}$ 
such that 
\begin{align*}
C^2=C^1T^{-1},\, 
\bB^2_k=T\cdot\bB^1_k, 
\text{ and }\bA_k^2=
T\cdot\bA_k^1\cdot T^{-1},\,
1\le k\le d.
\end{align*}
\begin{cor}[\cite{PV1}, Corollary 2.18]
\label{cor:ExistAndUnique}
Let
$R$ 
be a nc rational expression in 
$x_1,\ldots,x_d$ 
over 
$\KK$ 
and 
$\ul{Y}\in dom_s(R)$.
Then
$R$ 
admits a unique (up to unique similarity) 
minimal nc Fornasini--Marchesini realization 
centred at 
$\ul{Y}$, 
that is also a realization of 
$R$ 
w.r.t any 
unital stably finite $\KK-$algebra.
\end{cor}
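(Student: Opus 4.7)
The plan is to establish the corollary in three movements: synthesis of some realization, reduction to a minimal one, and verification that what was built is also a realization over any stably finite algebra; the similarity/uniqueness assertion then follows from a standard intertwining argument based on controllability and observability.

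For existence I would build the realization by induction on the construction of the nc rational expression $R$. The base case of a nc polynomial $p$ is handled by using its Taylor--Taylor expansion around $\ul{Y}$: the (finitely many) homogeneous terms package into linear mappings $\bA_k:\KK^{\sts}\to\KK^{L\times L}$ and $\bB_k:\KK^{\sts}\to\KK^{L\times s}$, with $D=p(\ul{Y})$ and $C$ reading off the top block. The inductive steps for $R_1+R_2$ and $R_1\cdot R_2$ are the usual block-diagonal and cascade constructions, while $R_1^{-1}$ is handled by the Schur-complement inversion formula; here $\ul{Y}\in dom_s(R_1)$ is exactly what is needed to invert $D_1=R_1(\ul{Y})\in\KK^{\sts}$, which allows the inverse realization to be written as a new Fornasini--Marchesini realization centred at $\ul{Y}$. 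Once a realization $\cR_0$ is in hand, I would pass to a minimal one by quotienting out the unobservable subspace (intersection of kernels in Definition \ref{def:25Jan19a}(2), which is $\ul{\bA}$-invariant in the extended sense and annihilated by $C$) and then restricting to the controllable subspace (span of ranges in Definition \ref{def:25Jan19a}(1), $\ul{\bA}$-invariant and containing the images of all $\bB_k$). Iteration yields a controllable and observable realization, which is minimal by Theorem \ref{thm:minObsCont}.

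For uniqueness up to unique similarity, I would expand $\cR(\ul{X})$ formally by geometric series in the shifts $X_k-I_m\otimes Y_k$ and extract that any two minimal realizations $\cR_1,\cR_2$ of $R$ centred at $\ul{Y}$ produce identical coefficients $C^j\ul{\bA}^{j,\omega}\bB_k^j$ when tested against arbitrary tuples of $\sts$ matrices. Define $T$ on a spanning set of controllable vectors by matching the two realizations; observability of the second realization shows $T$ is well-defined and injective, and controllability of the second realization shows $T$ is surjective, so $T$ is an invertible similarity. Uniqueness of $T$ is forced jointly by controllability of the first and observability of the second. The equality $D^1=D^2=R(\ul{Y})$ is automatic.

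For the stably finite extension, the point is that the synthesis procedure above is algebraic and commutes with the replacement of scalar/matrix evaluations by $\cA$-valued evaluations through the extension \eqref{eq:9Apr20b}. Each inversion of a block-triangular matrix appearing in the Schur-complement step has invertible diagonal blocks precisely when the corresponding $R_1^{\cA}(\ul{\fa})$-style inverse exists; the reverse direction---namely, that invertibility of the pencil $\Lambda^{\cA}_{\ul{\bA},\ul{Y}}(I_s\otimes\ul{\fa})$ descends to invertibility of each subexpression's evaluation---is exactly where stable finiteness enters, via Lemma 5.2 of \cite{HMS} (invertibility of a block-triangular matrix over a stably finite algebra forces invertibility of its diagonal blocks). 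I expect the main obstacle to lie precisely here: tracking the synthesis carefully enough so that the block-triangular form of the pencil is manifest at every inductive step, and then matching $I_s\otimes R^{\cA}(\ul{\fa})=\cR^{\cA}(I_s\otimes\ul{\fa})$ by inspection of the Schur-complement formula applied to $\cA$-valued inputs. The minimization step does not interfere with this, because quotienting and restricting are performed over $\KK$ and are compatible with the tensor extension $\bullet^{\cA}$.
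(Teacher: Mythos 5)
Your overall architecture --- synthesis by induction on the structure of $R$, reduction to a controllable and observable (hence, by Theorem \ref{thm:minObsCont}, minimal) realization, and the standard intertwining argument for uniqueness up to unique similarity --- is essentially the route taken in \cite{PV1}, and those parts are sound up to a small slip in the inversion step: what makes $D_1=R_1(\ul{Y})$ invertible is the hypothesis $\ul{Y}\in dom_s(R_1^{-1})$ for the expression being synthesized, not membership in $dom_s(R_1)$ alone, which only guarantees that $R_1(\ul{Y})$ is defined.

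The genuine gap is in the stably finite part, where you have located the use of stable finiteness in the wrong step. For the synthesized (non-minimal) realization $\cR_0$, the inclusion $dom_{\cA}(R)\subseteq\{\ul{\fa}\in\cA^d: I_s\otimes\ul{\fa}\in\Omega_{\cA}(\cR_0)\}$ and the identity $I_s\otimes R^{\cA}(\ul{\fa})=\cR_0^{\cA}(I_s\otimes\ul{\fa})$ hold over an \emph{arbitrary} unital $\KK-$algebra: each inversion in the synthesis is justified by a Schur complement whose relevant blocks are invertible by the inductive hypothesis, and the ``reverse direction'' you invoke (invertibility of the pencil forcing invertibility of the evaluations of subexpressions) is not part of what the corollary asserts. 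Where stable finiteness is genuinely needed is exactly the step you dismiss: transferring the $\cA-$statements from $\cR_0$ to the minimal realization $\cR$. The minimal realization is not similar to $\cR_0$; it is a sub-quotient, so after a change of basis over $\KK$ the pencil of $\cR_0$ becomes block triangular with the pencil of $\cR$ as one of the diagonal blocks. Over $\KK$ one passes from invertibility of the full pencil to invertibility of that diagonal block by a determinant argument, but over a general unital algebra this descent fails (with the unilateral shift $S$ one has the unitary, hence invertible, block triangular matrix with diagonal blocks $S$ and $S^{*}$, neither invertible), and it is precisely the characterization of stable finiteness in \cite[Lemma 5.2]{HMS} --- invertibility of a block triangular matrix over a stably finite algebra forces invertibility of its diagonal blocks --- that rescues it. Your justification that ``quotienting and restricting are performed over $\KK$ and are compatible with the tensor extension'' does not address this invertibility descent over $\cA$, so as written the claim that the \emph{minimal} realization is a realization of $R$ w.r.t.\ every unital stably finite algebra is not proved; since the statement is actually false for general unital algebras, some such use of stable finiteness in the reduction step is unavoidable.
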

\begin{theorem}[\cite{PV1}, Theorem 3.3]
\label{thm:MainThmFirstPaper}
Let 
$\fR\in\KK\plangle x_1,\ldots,x_d\prangle$. 
For every
two integers
$s,n\in\NN$,
a point 
$\ul{Y}\in dom_{s}(\fR)$,
a minimal nc Fornasini--Marchesini realization 
$\cR$ 
centred at 
$\ul{Y}$ 
of 
$\fR$,
and a unital stably finite 
$\KK-$algebra
$\cA$,
we have the following properties:
\begin{itemize}
\item[1.]
$dom_n(\fR)\subseteq
\{\ul{Z}\in(\KK^{\ntn})^d:
I_s\otimes \ul{Z}\in \Omega_{sn}(\cR)\}$
and
$I_s\otimes
\fR(\ul{Z})=\cR(I_s\otimes \ul{Z}),\,\forall\ul{Z}\in dom_n(\fR).$
\item[2.]
If 
$s\mid n$,  
then
$dom_{n}(\fR)\subseteq \Omega_{n}(\cR)$ 
and 
$\fR(\ul{Z})=\cR(\ul{Z}),\,
\forall 
\ul{Z}\in dom_{n}(\fR)$.
\item[3.]
$dom_{\cA}(\fR)
\subseteq
\{ \ul{\fa}\in\cA^d:
I_s\otimes\ul{\fa}\in \Omega_{\cA}(\cR)\}$
and 
$I_s\otimes
\fR^{\cA}(\ul{\fa})
=\cR^{\cA}(I_s\otimes\ul{\fa}),
\,\forall\ul{\fa}\in dom_{\cA}(\fR).$
\end{itemize}
\end{theorem}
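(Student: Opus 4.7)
The plan is to reduce the statement about the nc rational function $\fR$ to the corresponding statement about a nc rational expression representing $\fR$, which is precisely the content of Corollary \ref{cor:ExistAndUnique}. The minimal realization $\cR$ of $\fR$ centred at $\ul{Y}$ is, by definition, the minimal realization (centred at $\ul{Y}$) of any fixed representative $R_Y\in\fR$ that is regular at $\ul{Y}$, and uniqueness up to unique similarity guarantees that this does not depend on the chosen representative. So the entire theorem reduces to showing that, for every $\ul{Z}$ in $dom_n(\fR)$ (respectively, every $\ul{\fa}\in dom_{\cA}(\fR)$), the realization $\cR$ is actually a realization of some representative that knows about $\ul{Z}$ (respectively $\ul{\fa}$) as well as $\ul{Y}$.

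The technical heart of the argument is the following auxiliary lemma, which I would establish first: if $\fR$ is regular at two points $\ul{Y}\in(\KK^{\sts})^d$ and $\ul{Z}\in(\KK^{\ntn})^d$ (or at a pair $\ul{Y},\ul{\fa}$ where $\ul{\fa}\in\cA^d$), then there exists a single nc rational expression $R'\in\fR$ that is simultaneously regular at both points. This is the standard ``common-representative'' fact for nc rational functions and follows from the fact that two non-degenerate expressions representing the same function agree on the intersection of their domains, together with a synthesis procedure that enlarges the domain without changing the evaluation. Granted this lemma, the uniqueness part of Corollary \ref{cor:ExistAndUnique} identifies $\cR$ (up to unique similarity) with the minimal realization of $R'$ centred at $\ul{Y}$, so that $R'$ admits $\cR$ as a realization in the sense of Definition \ref{def:25Sep18a}.

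With this in hand, parts 1 and 2 are quick. For part 1, fix $\ul{Z}\in dom_n(\fR)$ and take a common representative $R'$. Since $R'$ is regular at $\ul{Z}$ and evaluation of nc rational expressions respects direct sums, we have $I_s\otimes\ul{Z}\in dom_{sn}(R')$, so by Definition \ref{def:25Sep18a}, $I_s\otimes\ul{Z}\in\Omega_{sn}(\cR)$ and
\begin{equation*}
\cR(I_s\otimes\ul{Z})=R'(I_s\otimes\ul{Z})=I_s\otimes R'(\ul{Z})=I_s\otimes\fR(\ul{Z}).
\end{equation*}
For part 2 we assume $s\mid n$ and write $n=sm$; then $\ul{Z}\in dom_n(\fR)\subseteq dom_{sm}(R')$, and applying Definition \ref{def:25Sep18a} directly (without any tensoring) gives $\ul{Z}\in\Omega_{sm}(\cR)=\Omega_n(\cR)$ together with $\cR(\ul{Z})=R'(\ul{Z})=\fR(\ul{Z})$.

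For part 3 the argument runs in parallel, but now I invoke the second half of Corollary \ref{cor:ExistAndUnique}: the minimal realization $\cR$ of $R'$ centred at $\ul{Y}$ is also a realization of $R'$ with respect to any unital stably finite $\KK$-algebra $\cA$. Choosing a common representative $R'\in\fR$ regular at both $\ul{Y}$ and $\ul{\fa}$, the definition of admitting a realization w.r.t.\ $\cA$ directly yields $I_s\otimes\ul{\fa}\in\Omega_{\cA}(\cR)$ and $I_s\otimes\fR^{\cA}(\ul{\fa})=I_s\otimes R'^{\cA}(\ul{\fa})=\cR^{\cA}(I_s\otimes\ul{\fa})$. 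The main obstacle throughout is the common-representative lemma, which is not a formal consequence of the definitions alone; it depends on the fact that one can perform algebraic manipulations of nc rational expressions to enlarge their domain while staying within the same equivalence class, and this in turn relies on the skew field structure of $\KK\plangle\ul{x}\prangle$ and the equivalence of the evaluation equivalence relation with formal equality in the universal skew field of fractions, as proved in \cite{KV3,KV4}.
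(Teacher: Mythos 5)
There is a genuine gap, and it sits exactly where you locate it yourself: the ``common-representative lemma''. You treat as standard the claim that whenever $\fR$ is regular at $\ul{Y}\in(\KK^{\sts})^d$ and at $\ul{Z}\in(\KK^{\ntn})^d$ (or at $\ul{\fa}\in\cA^d$), some single expression $R'\in\fR$ is regular at both points, but this is neither standard nor proved by the ingredients you cite. For matrix points it is in fact equivalent to the closure of $dom(\fR)$ under direct sums: if $R'$ is regular at $\ul{Y}$ and $\ul{Z}$ then it is regular at $\ul{Y}\oplus\ul{Z}$, and conversely any representative regular at $\ul{Y}\oplus\ul{Z}$ is regular at each summand, since every subexpression evaluates blockwise and a block-diagonal matrix is invertible iff its blocks are. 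But closure of $dom(\fR)$ under direct sums is precisely what this series of papers treats as a hard theorem: in the present paper it appears only as Corollary \ref{cor:12May20a}, is explicitly flagged as ``highly non trivial'', and its proof uses Theorems \ref{thm:19Oct17a}, \ref{thm:6Aug19a} and \ref{thm:3Jun19a}, which themselves take Theorem \ref{thm:MainThmFirstPaper} as an input. So, within this development, assuming the common-representative lemma makes your argument circular; and the justification you sketch --- that equivalent expressions agree on the intersection of their domains plus a ``domain-enlarging synthesis procedure'', or an appeal to the universal skew field and \cite{KV3,KV4} --- does not produce it: agreement on intersections says nothing about finding one expression covering two prescribed points, and no such enlargement procedure exists in general. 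The difficulty is even sharper in part 3, where the needed representative would have to be simultaneously regular at the matrix point $\ul{Y}$ and at an $\cA$-point $\ul{\fa}$ for an arbitrary unital stably finite $\cA$; the direct-sum reformulation is not even available there, and Example \ref{ex:10Nov19a} shows how delicate algebra evaluations can be.

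Two further remarks. First, the present paper does not prove this statement at all --- it is quoted verbatim from \cite{PV1}, where the argument is built on the synthesis construction of realizations of a fixed representative and their evaluation over stably finite algebras (the route behind Corollary \ref{cor:ExistAndUnique}), not on comparing representatives at two points; so your route is not a rearrangement of the existing proof but a reduction to an unproven statement at least as strong as the theorem's hardest consequences. Second, a smaller point: your identification of ``the'' minimal realization of $\fR$ centred at $\ul{Y}$ with the minimal realization of an arbitrary representative regular at $\ul{Y}$ does need the uniqueness-across-representatives statement of \cite{PV1} (as quoted in Corollary \ref{cor:ExistAndUnique}), which is fine to invoke, but it should be said explicitly since minimality is defined expression by expression. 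The modular structure of parts 1--3, granted the lemma, is otherwise sound; the lemma itself is the missing (and, here, unobtainable without circularity) ingredient.
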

\subsection{NC functions}
\label{sec:NCfunctions}
The definitions and results mentioned in this subsection are taken from the book 
\cite{KV1}. We do not quote those in their full generality, as in the book they appear in the framework of a module  
over a commutative ring, 
while here we consider the framework of a vector space 
$\cV$ 
over a field 
$\KK$. 
Moreover, we will mostly consider the vector space 
$\cV=\KK^d$ 
for an integer 
$d\in\NN$.
\smallskip

If 
$\cV$ 
is a vector space over a field
$\KK$, 
then  
$\cV_{nc}:=\coprod_{n=1}^\infty 
\cV^{\ntn}$
is called the nc space over 
$\cV$ 
and consists of all square matrices over 
$\cV$.
For every 
$X\in\cV^{\ntn}$
and
$Y\in\cV^{\mtm}$, 
where
$m,n\in\NN$, we define their direct sum as
$$X\oplus Y:=
\begin{bmatrix}
X&0\\0&Y\\
\end{bmatrix}\in\cV^{(n+m)\times(n+m)}.$$ 
For every 
$\Omega\subseteq \cV_{nc}$ 
and
$n\in\NN$ 
we use the notation 
$\Omega_n:=\Omega\cap \cV^{\ntn}$.
\begin{definition}
\label{def:12May20a}
1. A subset $\Omega\subseteq\cV_{nc}$
is called a \textbf{nc set} if it is 
closed under direct sums, 
i.e., if
$X\in\Omega_n$
and
$Y\in\Omega_m$,
then 
$X\oplus Y\in \Omega_{n+m}$,
for every two integers $n,m\in\NN$. 
\\2.
A nc set $\Omega\subseteq\cV_{nc}$ 
is called \textbf{upper} 
(resp., \textbf{lower}) \textbf{admissible} 
if for every
$n,m\in\NN,\,
X\in\Omega_n,\,
Y\in\Omega_m$, 
and 
$Z\in\cV^{n\times m}$
(resp., $Z\in\cV^{\mtn}$), there exists
$0\ne c\in\KK$ such that 
$$\begin{bmatrix}
X&cZ\\0&Y\\
\end{bmatrix}\in \Omega_{n+m}\,
\Big(resp.,\,
\begin{bmatrix}
X&0\\
cZ&Y\\
\end{bmatrix}\in\Omega_{n+m}\Big).$$
3.
A subset
$\Omega\subseteq\cV_{nc}$
is called \textbf{similarity invariant}, 
if for every 
$n\in\NN,\,X\in\Omega_n$, and invertible 
$T\in\KK^{\ntn}$, 
we have 
$T\cdot X\cdot T^{-1}\in\Omega_n$.
\end{definition}
Notice that if 
$X\in\cV^{\ntn}$ 
and 
$T\in\KK^{\ntn}$, 
by the products 
$T\cdot X$ 
and 
$X\cdot T$ 
we mean the standard matrix multiplication and we use the action of 
$\KK$ 
on 
$\cV$. 
In the special and most common case where 
$\cV=\KK^d$, 
we have the identification
\begin{equation*}
\big(\KK^d\big)_{nc}
=\coprod_{n=1}^\infty
\big(\KK^d\big)^{\ntn}\cong
\coprod_{n=1}^\infty 
\big(\KK^{\ntn}\big)^d,
\end{equation*}
that is the nc space of all 
$d-$tuples of square matrices over 
$\KK$.
Thus, in this particular case, for every 
$\ul{X}=(X_1,\ldots,X_d)\in(\KK^{\ntn})^d$
and
$T\in\KK^{\ntn}$, 
the products
$T\cdot\ul{X}$ and $\ul{X}\cdot T$ are given by
\begin{equation*}
T\cdot\ul{X}:=(TX_1,\ldots,TX_d)
\text{ and }\ul{X}\cdot 
T:=(X_1T,\ldots,X_dT).
\end{equation*}
For every nc rational expression $R$
in $x_1,\ldots,x_d$ over 
$\KK$, the domain
of  regularity of $R$ 
is an upper admissible, similarity invariant nc set. We will show 
eventually that the domain of any 
nc rational function of 
$x_1,\ldots,x_d$ over 
$\KK$ is an upper admissible, similarity invariant nc set as well
(cf. Corollary
\ref{cor:12May20a}). 
If
$\KK=\CC$, 
the domains of nc rational expressions or functions are open in the uniformly-open topology (cf. 
\cite[Chapter 4.2]{KV1} 
for a discussion on the uniformly-open topology and
\cite[Lemma A.5]{HMcCV}
for the proof in the case where regularity at $0$ is assumed).

Another important example of a similarity invariant, upper admissible nc set is a nilpotent ball:
for every 
$s\in\NN$ 
and 
$\ul{Y}=(Y_1,\ldots,Y_d)\in(\KK^{\sts})^d$, 
we define the \textbf{nilpotent ball around} 
$\ul{Y}$ as follows 
\begin{align}
Nilp(\ul{Y}):=\coprod_{m=1}^\infty
Nilp_{sm}(\ul{Y})\subseteq (\KK^d)_{nc},
\end{align}
where
\begin{align}
Nilp_{sm}(\ul{Y})
:=\big\{
\ul{X}\in(\KK^{sm\times sm})^d: 
(\ul{X}-I_m\otimes \ul{Y})
\text{ is jointly nilpotent}
\big\}
\end{align}
for every 
$m\in\NN$.
\begin{definition}
\label{def:21Apr20e}
Let 
$\cV$
and
$\cW$ 
be vector spaces over a field 
$\KK$ 
and 
$\Omega\subseteq\cV_{nc}$ 
be a nc set, then 
$f:\Omega\rightarrow\cW_{nc}$ 
is called a
\textbf{nc function}, if
\\1. $f$ 
is graded, i.e., if
$n\in\NN$
and
$X\in\Omega_n$, 
then
$f(X)\in\cW^{\ntn}$;
\\2.
$f$ 
respects direct sums, i.e., if 
$X,Y\in\Omega$, 
then
$f(X\oplus Y)=f(X)\oplus f(Y)$; and
\\3.
$f$ respects similarities, i.e.,  
if
$n\in\NN
,\,X\in\Omega_n$,
and
$T\in\KK^{\ntn}$ 
is invertible such that 
$T\cdot X\cdot T^{-1}\in\Omega_n$, 
then
$f(T\cdot X\cdot T^{-1})=
T\cdot f(X)\cdot T^{-1}$.
\end{definition}
Conditions $2$ and
$3$ in Definition
\ref{def:21Apr20e} are equivalent to a single one: 
$f$ 
{\bf respects intertwining}, namely if 
$XS=SY$, 
then $f(X)S=Sf(Y)$, where
$X\in\Omega_n,\,
Y\in\Omega_m,$
and
$S\in\KK^{\ntm}$ ($n,m\in\NN$). 

Every nc rational expression 
$R$ in $x_1,\ldots,x_d$ over $\KK$
is a nc function on
$dom(R)$, with $\cV=\KK^d$ and $\cW=\KK$.
It is also true that for any nc rational function 
$\fR$ of 
$x_1,\ldots,x_d$ over
$\KK$,
$\fR\restriction_{\Omega}$ 
is a nc function on 
$\Omega$, 
for every nc set 
$\Omega\subseteq dom(\fR)$.
For instance we can take
$\Omega$
to be the nilpotent ball around a point 
$\ul{Y}\in dom(\fR)$. 
Since we will show later that 
$dom(\fR)$
is itself an nc set,
it follows also that 
$\fR$
is a nc function on
$dom(\fR)$ 
(cf. Corollary
\ref{cor:12May20a}). 
\begin{remark}
If 
$\Omega\subseteq\cV_{nc}$ 
is a nc set, then by 
$\wt{\Omega}$ 
we denote the smallest nc set that contains
$\Omega$ 
and that is similarity invariant; 
$\wt{\Omega}$
is called the \textbf{similarity invariant envelope}
of 
$\Omega$.
If $\Omega$ is upper admissible, then so is 
$\wt{\Omega}$. Another important result that we will use later on is the following: 
if
$\Omega\subseteq\cV_{nc}$
is a nc set and 
$f:\Omega\rightarrow\cW_{nc}$ is a nc function, then there exists
a unique nc function 
$\wt{f}:\wt{\Omega}\rightarrow\cW_{nc}$
such that  
$\wt{f}\mid_{\Omega}=f$
(cf.
\cite[Appendix A]{KV1}). 
\end{remark}
It is shown in
\cite[Theorem 5.8]{KV1}
that every nc function
$f$ on an upper admissible nc set containing the point 
$\ul{Y}\in(\KK^{\sts})^d$,
admits a Taylor--Taylor power series expansion around
$\ul{Y}$, i.e., an expansion of the form
\begin{align}
\label{eq:12May20c}
f(\ul{X})=\sum_{\omega\in\cG_d}
(\ul{X}-I_m\otimes \ul{Y}
)^{\odot_s\omega}f_\omega,
\end{align}
where for every $\omega\in\cG_d$, 
$f_{\omega}:(\KK^{\sts})^{|\omega|}\rightarrow
\KK^{\sts}$
is a 
$|\omega|-$linear mapping, or alternatively a linear mapping from 
$\big(\KK^{\sts}\big)^{\otimes|\omega|}$
to
$\KK^{\sts}$. 
Notice that 
$$(\ul{X}-I_m\otimes 
\ul{Y})^{\odot_s\omega}\in 
\Big(\big(\KK^{\sts}\big)^{\otimes|\omega|}
\Big)^{\mtm},$$
hence we can apply 
$f_{\omega}$
to every entry of this matrix yielding 
a matrix in 
$\big(\KK^{\sts}\big)^{\mtm}\cong 
\KK^{sm\times sm}$,
 which is where the value 
$f(\ul{X})$ 
lies; this is how we extend 
$f_{\omega}$ to a mapping from
$(\KK^{sm\times sm})^{|\omega|}$
into $\KK^{sm\times sm}$.
The equality in 
(\ref{eq:12May20c}) holds for every 
$\ul{X}\in Nilp(\ul{Y})$, 
as this ensures that the series in 
(\ref{eq:12May20c}) is actually finite.

One important difference with the case 
of a scalar centre ($s=1$) is that the coefficients 
$(f_{\omega})_{\omega\in\cG_d}$ 
are not arbitrary multilinear mappings,  
as they have to satisfy certain compatibility 
conditions w.r.t
$\ul{Y}$, 
called the
\textbf{lost-abbey ($\cL\cA$) conditions} 
(see 
\cite[equations (4.14)-(4.17)]{KV1}):
\begin{align}
\label{eq:7May18a}
Sf_\emptyset-f_\emptyset S=
\sum_{k=1}^d 
f_{g_k}
([S,Y_k])
\end{align}
and for every 
$\omega=g_{i_1}\ldots 
g_{i_\ell}\ne\emptyset$
in
$\cG_d:$
\begin{align}
\label{eq:7May18b}
Sf_\omega
(Z_1,\ldots,Z_\ell)
-f_\omega(SZ_1,\ldots,Z_\ell)
=
\sum_{k=1}^d
f_{g_k\omega}([S,Y_k],Z_1,\ldots,
Z_\ell),
\end{align}
\begin{align}
\label{eq:14Apr20e}
f_\omega(Z_1,\ldots,Z_\ell S)
-
f_\omega(Z_1,\ldots,Z_\ell) S=
\sum_{k=1}^d 
f_{\omega g_k}(Z_1,\ldots,Z_\ell,[S,Y_k])
\end{align}
and
\begin{multline}
\label{eq:7May18c}
f_\omega(Z_1,\ldots,Z_{j-1},Z_jS,Z_{j+1},
\ldots,Z_\ell)-
f_\omega(Z_1,\ldots,Z_j,
SZ_{j+1},\ldots,Z_\ell)
\\=\sum_{k=1}^d 
f_{g_{i_1}\ldots g_{i_j}
g_kg_{i_{j+1}}\ldots g_{i_\ell}}
(Z_1,\ldots,Z_j,[S,Y_k],Z_{j+1},\ldots,Z_\ell),
\end{multline}
for every $Z_1,\ldots,Z_{\ell}\in\KK^{\sts},\,
1\le j<\ell$, 
and
$S\in\KK^{\sts}$.
Conversely, in
\cite[Theorem 5.1.5]{KV1} it is shown
that given a sequence of multilinear mappings
$(f_{\omega})_{\omega\in\cG_d}$ 
which satisfy the $\cL\cA$ conditions, the power series in
(\ref{eq:12May20c})
is a nc function on
the nilpotent ball around
$\ul{Y}$.
\begin{remark}
\label{rem:16May20ab}
If a nc function
$f$ is locally bounded, then its Taylor--Taylor series
around
$\ul{Y}$, as in
(\ref{eq:12May20c}), 
converges (absolutely and uniformly) on 
a neighborhood of
$\ul{Y}$ in an appropriate topology,
see
\cite[Corollary 7.5]{KV1}
\end{remark}

\section{The Linearized Lost-Abbey Conditions}
\label{sec:la}
One of the purposes of this section is to reformulate the 
$\cL\cA$ 
conditions in the case of 
\textbf{nc rational functions}, in terms of the coefficients of a minimal
nc Fornasini--Marchesini realization of the function.
\begin{definition}
\label{def:14Apr20a}
Let 
$f$ be a generalized nc power series around
$\ul{Y}$
of the form
(\ref{eq:12May20c})
and let 
$\cR$ be a
nc Fornasini--Marchesini
realization 
that is centred at
$\ul{Y}$ 
and described by
$(L;D,C,\ul{\bA},\ul{\bB})$.
We say that \textbf{the series
$f$
admits the realization}
$\cR$,
if
$$f_{\emptyset}=D
\text{ and }f_{\omega}(Z_1,\ldots,Z_{\ell})
=C\bA_{i_1}(Z_1)\cdots\bA_{i_{\ell-1}}
(Z_{\ell-1})\bB_{i_\ell}(Z_{\ell}),$$
for every
$\emptyset\ne\omega=g_{i_1}
\ldots g_{i_\ell}\in\cG_d$
and
$Z_1,\ldots,Z_{\ell}\in\KK^{\sts}$. 
This is equivalent
to say that 
$f(\ul{X})=\cR(\ul{X})$ 
for every 
$\ul{X}\in Nilp(\ul{Y})$.
\end{definition}
If 
$\fR$ 
is a nc rational function that is regular at 
$\ul{Y}=(Y_1,\ldots,Y_d)\in(\KK^{\sts})^d$, 
then
Corollary
\ref{cor:ExistAndUnique}
and Theorem 
\ref{thm:MainThmFirstPaper}
guarantee that 
$\fR$ 
admits a minimal nc 
Fornasini--Marchesini realization 
$\cR$
that is centred at 
$\ul{Y}$
and described by
$(L;D,C,\ul{\bA},\ul{\bB})$. 
Then, a direct computation 
(cf. 
\cite[Lemma 2.12]{PV1}) shows that 
the
Taylor--Taylor power series expansion of 
$\fR$ 
around
$\ul{Y}$ 
is 
\begin{align}
\label{eq:5Ma17a}
\fR(\ul{X})=
\sum_{\omega\in\cG_d}
(\ul{X}-I_m\otimes \ul{Y}
)^{\odot_s\omega}\fR_\omega,
\,\ul{X}\in(\KK^{sm\times sm})^d
\end{align} 
where the coefficients 
$\fR_\omega:(\KK^{\sts})^{|\omega|}
\rightarrow \KK^{\sts}$ 
are the multilinear mappings given by 
\begin{align}
\label{eq:14Apr20c}
\fR_{\omega}(Z_1,\ldots,Z_{\ell})=
C\bA_{i_1}(Z_1)\cdots
\bA_{i_{\ell-1}}(Z_{\ell-1})
\bB_{i_\ell}(Z_\ell)
\end{align} 
for 
$\emptyset\ne\omega\in\cG_d$, 
and 
$\fR_{\emptyset}=D$,
while they can also be seen as multilinear mappings on 
$(\KK^{sm\times sm})^{\odot_s|\omega|}$ by
\begin{align*}
(Z_1\odot_s\cdots\odot_s Z_\ell)\fR_\omega=
(I_m\otimes C)(Z_1)\bA_{i_1}\cdots
(Z_{\ell-1})\bA_{i_{\ell-1}}
(Z_\ell)\bB_{i_\ell},
\end{align*}
if 
$\omega=g_{i_1}\cdots
g_{i_\ell}\ne\emptyset$ 
and 
$\fR_\emptyset=I_m\otimes D$,
for every $m\in\NN$ 
and 
$Z_1,\ldots,Z_{\ell}\in(\KK^{sm\times sm})^d$.
Therefore, it follows immediately that the series in   
(\ref{eq:5Ma17a}), 
which comes from a nc rational function 
$\fR$ (or even a nc rational expression), 
admits the minimal nc Fornasini--Marchesini realization $\cR$.

As mentioned in the discussion above, 
the coefficients 
$(\fR_\omega)_{\omega\in\cG_d}$
in
(\ref{eq:14Apr20c}) 
must satisfy the 
$\cL\cA$ 
conditions 
and hence we get the 
$\cL\cA$ 
conditions 
in terms of 
$\bA_1,\ldots,\bA_d,\bB_1,\ldots,\bB_d,C,$ 
and
$D$, 
in the case where 
$\fR$ 
admits a minimal realization centred at 
$\ul{Y}$, as formulated next:
\begin{lemma}
\label{lem:13Dec17a}
Let 
$f$ 
be a generalized nc power series around 
$\ul{Y}=(Y_1,\ldots,Y_d)\in(\KK^{\sts})^d$ 
of the form
(\ref{eq:12May20c}).
Suppose that 
$f$ 
admits a nc 
Fornasini--Marchesini realization
$\cR$ that is centred at
$\ul{Y}$ and described by 
$(L;D,C,\ul{\bA},\ul{\bB})$, and that 
$\cR$ is controllable and observable.
Then the series
$f$
is a nc function on 
$Nilp(\ul{Y})$
if and only if 
the following equations hold
\begin{align}
\label{eq:12Jun17a}
SD-DS=C
\sum_{k=1}^d \bB_k([S,Y_k]),
\end{align}
\begin{align}
\label{eq:12Jun17b}
SC\bB_{i_1}(Z_1)-C\bB_{i_1}(SZ_1)
=C\Big(\sum_{k=1}^d
\bA_k([S,Y_k])\Big)\bB_{i_1}(Z_1),
\end{align}
\begin{align}
\label{eq:12Jun17c}
SC\bA_{i_1}(Z_1)-C\bA_{i_1}(SZ_1)
=C\Big(\sum_{k=1}^d
\bA_k([S,Y_k])\Big)\bA_{i_1}(Z_1),
\end{align}
\begin{align}
\label{eq:12Jun17f}
\bB_{i_1}(Z_1S)-\bB_{i_1}(Z_1)S
=\bA_{i_1}(Z_1)
\sum_{k=1}^d
\bB_k([S,Y_k]),
\end{align}
\begin{align}
\label{eq:12Jun17e}
\bA_{i_1}(Z_1S)\bB_{i_2}(Z_2)
-\bA_{i_1}(Z_1)
\bB_{i_2}(SZ_2)=\bA_{i_1}(Z_1)
\Big( 
\sum_{k=1}^d
\bA_k([S,Y_k])\Big)\bB
_{i_2}(Z_2),
\end{align}
and
\begin{align}
\label{eq:12Jun17d}
\bA_{i_1}(Z_1S)\bA_{i_2}(Z_2)
-\bA_{i_1}(Z_1)
\bA_{i_2}(SZ_2)=\bA_{i_1}(Z_1)
\Big( 
\sum_{k=1}^d
\bA_k([S,Y_k])\Big)\bA_{i_2}(Z_2),
\end{align}
for every 
$S,Z_1,Z_2\in\KK^{\sts}$ 
and 
$1\le i_1,i_2\le d$. 
\end{lemma}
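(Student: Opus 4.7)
The strategy is to reduce the statement to a known characterization from nc function theory. The Taylor--Taylor expansion theorem \cite[Theorem 5.8]{KV1}, together with its converse \cite[Theorem 5.1.5]{KV1}, says that a generalized nc power series of the form \eqref{eq:12May20c} is a nc function on $Nilp(\ul{Y})$ if and only if its coefficients $(f_\omega)_{\omega\in\cG_d}$ satisfy the $\cL\cA$ conditions \eqref{eq:7May18a}--\eqref{eq:7May18c}. Hence it suffices to show that, under the realization substitution
\begin{equation*}
f_\emptyset = D, \qquad f_{g_{i_1}\cdots g_{i_\ell}}(Z_1,\ldots,Z_\ell) = C\bA_{i_1}(Z_1)\cdots \bA_{i_{\ell-1}}(Z_{\ell-1})\bB_{i_\ell}(Z_\ell),
\end{equation*}
the $\cL\cA$ conditions are equivalent to the $\cL$--$\cL\cA$ equations \eqref{eq:12Jun17a}--\eqref{eq:12Jun17d}.

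For the direction $\cL$--$\cL\cA \Rightarrow \cL\cA$, which is purely algebraic and uses neither controllability nor observability, I would plug the realization expressions of $f_\omega$ into each $\cL\cA$ equation and recognise the required $\cL$--$\cL\cA$ identity as a factor inside. Namely, \eqref{eq:7May18a} coincides with \eqref{eq:12Jun17a}; \eqref{eq:7May18b} for $\omega=g_{i_1}$ coincides with \eqref{eq:12Jun17b}, and for $|\omega|\ge 2$ follows by right-multiplying \eqref{eq:12Jun17c} by the common tail $\bA_{i_2}(Z_2)\cdots\bB_{i_\ell}(Z_\ell)$; \eqref{eq:14Apr20e} follows by left-multiplying \eqref{eq:12Jun17f} by $C\bA_{i_1}(Z_1)\cdots\bA_{i_{\ell-1}}(Z_{\ell-1})$; finally \eqref{eq:7May18c} follows from \eqref{eq:12Jun17e} when $j{+}1=\ell$ and from \eqref{eq:12Jun17d} when $j{+}1<\ell$, after factoring out the appropriate head and tail from both sides.

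For the reverse direction $\cL\cA \Rightarrow \cL$--$\cL\cA$, substituting the realization expressions into an $\cL\cA$ equation for a word $\omega$ of sufficient length produces an identity of the shape
\begin{equation*}
H\cdot M\cdot T \;=\; 0,
\end{equation*}
where $H = C\bA_{j_1}(W_1)\cdots\bA_{j_p}(W_p)$ records the free left prefix of $\omega$, $T = \bA_{j'_1}(W'_1)\cdots \bB_{j'_q}(W'_q)$ records the free right suffix of $\omega$, and $M$ is exactly the matrix expression that one of \eqref{eq:12Jun17a}--\eqref{eq:12Jun17d} asserts to vanish. Since the word $\omega$ and the position of the shift are free, the admissible $T$'s range over the controllability data, and the admissible $H$'s over the observability data of Definition \ref{def:25Jan19a}. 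Controllability then lets us cancel $T$ from the right (columns of such $T$'s span $\KK^L$), and observability lets us cancel $H$ from the left (the intersection of their kernels is $\{\ul{0}\}$), so $M=0$. The base equations \eqref{eq:12Jun17a}--\eqref{eq:12Jun17b} need no cancellation; \eqref{eq:12Jun17c} and \eqref{eq:12Jun17f} require only the one-sided cancellation by controllability and by observability respectively; and \eqref{eq:12Jun17e}, \eqref{eq:12Jun17d} require both.

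The main obstacle is the bookkeeping: for each of the six $\cL$--$\cL\cA$ equations one must pick the correct $\cL\cA$ word $\omega$ and interior position $j$ so as to isolate it, and must verify that the family of prefixes (resp.\ suffixes) freed by this choice is precisely the observability (resp.\ controllability) data. Once these choices are made the cancellations are immediate, and combined with \cite[Theorem 5.1.5]{KV1} they yield the stated equivalence.
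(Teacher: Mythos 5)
Your proposal is correct and follows essentially the same route as the paper: reduce via the Taylor--Taylor theorem and its converse in [KV1] to the $\cL\cA$ conditions, then substitute the realization formula for the coefficients and cancel the free prefixes and suffixes using observability and controllability, exactly as the paper does (it carries out the details only for the case of (\ref{eq:7May18c}) with an interior shift, which yields (\ref{eq:12Jun17d})). One small bookkeeping remark: for (\ref{eq:12Jun17e}) the $\bB$-factor is necessarily the last letter of the word, so no suffix can appear and only observability is needed (as the paper notes), not both hypotheses as you state; since both are assumed anyway, this does not affect the validity of your argument.
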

We call equations 
$(\ref{eq:12Jun17a})-(\ref{eq:12Jun17d})$ 
the \textbf{linearized lost-abbey ($\cL-\cL\cA$) conditions}.
\begin{proof}
From the assumption that
$f$ 
admits the realization 
$\cR$, 
we know that 
\begin{align}
\label{eq:14Apr20d}
f_{\emptyset}=D
\text{ and } 
f_{\omega}(Z_1,\ldots,Z_{\ell})
=C\bA_{i_1}(Z_1)\cdots\bA_{i_{\ell-1}}
(Z_{\ell-1})\bB_{i_\ell}(Z_{\ell})
\end{align}
for every
$\emptyset\ne\omega=g_{i_1}\ldots
g_{i_\ell}\in\cG_d$
and 
$Z_1,\ldots,Z_{\ell}
\in\KK^{s\times s}$.
Therefore, the series $f$ is a nc function in
$Nilp(\ul{Y})$ if and only if the coefficients $(f_{\omega})_{\omega\in\cG_d}$
in
(\ref{eq:14Apr20d}) satisfy  
equations  
$(\ref{eq:7May18a})-(\ref{eq:7May18c})$.
It is only left to show 
that this is equivalent to the fact 
that equations
$(\ref{eq:12Jun17a})-(\ref{eq:12Jun17d})$ hold.

This part of the proof is mainly technical, 
so we show it only for one of the equations, 
while stressing where the
minimality of 
$\cR$ 
is coming into play.
While skipping the computations, we mention briefly how to obtain all the
other equations:
\begin{enumerate}
\item[$\bullet$] 
(\ref{eq:12Jun17a}) 
is obtained from 
(\ref{eq:7May18a});
\item[$\bullet$]
(\ref{eq:12Jun17b}) is obtained from 
(\ref{eq:7May18b}), 
by taking 
$\ell=|\omega|=1$;
\item[$\bullet$]
(\ref{eq:12Jun17c}) 
is obtained from
(\ref{eq:7May18b}), 
by taking 
$\ell=|\omega|>1$ 
and using the controllability of
$(\ul{\bA},\ul{\bB})$;
\item[$\bullet$]
(\ref{eq:12Jun17f}) is obtained from
(\ref{eq:14Apr20e}), by taking any 
$\ell\ge1$ 
and using the observability of 
$(C,\ul{\bA})$; 
\item[$\bullet$]
(\ref{eq:12Jun17e}) 
is obtained from 
(\ref{eq:7May18c}), 
by taking 
$j=\ell-1$ 
and using the observability of
$(C,\ul{\bA})$;
\item[$\bullet$]
(\ref{eq:12Jun17d}) 
is obtained from 
(\ref{eq:7May18c}), 
by taking 
$j<\ell-1$ 
and using both the observability of
$(C,\ul{\bA})$ and the controllability of 
$(\ul{\bA},\ul{\bB})$.
\end{enumerate}
We now show the last equivalence in this list: as
\begin{multline*}
f_\omega(Z_1,\ldots,Z_{j-1},Z_jS,Z_{j+1},
\ldots,Z_\ell)
\\=C\bA_{i_1}(Z_1)\cdots
\bA_{i_{j-1}}(Z_{j-1})
\bA_{i_j}(Z_jS)
\bA_{i_{j+1}}(Z_{j+1})
\cdots\bA_{i_{\ell-1}}
(Z_{\ell-1})\bB_{i_{\ell}}(Z_{\ell}),
\end{multline*}
\begin{multline*}
f_\omega(Z_1,\ldots,Z_j,
SZ_{j+1},\ldots,Z_\ell)
\\=C\bA_{i_1}(Z_1)\cdots
\bA_{i_j}(Z_{j})
\bA_{i_{j+1}}(SZ_{j+1})
\bA_{i_{j+2}}(Z_{j+2})
\cdots\bA_{i_{\ell-1}}
(Z_{\ell-1})\bB_{i_{\ell}}(Z_{\ell}),
\end{multline*}
and
\begin{multline*}
f_{g_{i_1}\ldots g_{i_j}
g_kg_{i_{j+1}}\ldots g_{i_\ell}}
(Z_1,\ldots,Z_j,[S,Y_k],Z_{j+1},\ldots,Z_\ell)
\\=C\bA_{i_1}(Z_1)\cdots
\bA_{i_j}(Z_{j})\bA_k([S,Y_k])
\bA_{i_{j+1}}(Z_{j+1})
\cdots\bA_{i_{\ell-1}}
(Z_{\ell-1})\bB_{i_{\ell}}(Z_{\ell})
\end{multline*}
for every $1<j<\ell-1$,
equation
(\ref{eq:7May18c}) holds if and only if
\begin{multline*}
C\bA_{i_1}(Z_1)\cdots
\bA_{i_{j-1}}(Z_{j-1})
\bA_{i_j}(Z_jS)
\bA_{i_{j+1}}(Z_{j+1})
\cdots\bA_{i_{\ell-1}}
(Z_{\ell-1})\bB_{i_{\ell}}(Z_{\ell})
\\-C\bA_{i_1}(Z_1)\cdots
\bA_{i_j}(Z_{j})
\bA_{i_{j+1}}(SZ_{j+1})
\bA_{i_{j+2}}(Z_{j+2})
\cdots\bA_{i_{\ell-1}}
(Z_{\ell-1})\bB_{i_{\ell}}(Z_{\ell})
\\=\sum_{k=1}^d
C\bA_{i_1}(Z_1)\cdots
\bA_{i_j}(Z_{j})\bA_k([S,Y_k])
\bA_{i_{j+1}}(Z_{j+1})
\cdots\bA_{i_{\ell-1}}
(Z_{\ell-1})\bB_{i_{\ell}}(Z_{\ell}),
\end{multline*}
which is equivalent to
\begin{multline*}
C\bA_{i_1}(Z_1)\cdots
\bA_{i_{j-1}}(Z_{j-1})
\\ \Big(
\bA_{i_j}(Z_jS)
\bA_{i_{j+1}}(Z_{j+1})
-\bA_{i_j}(Z_{j})
\bA_{i_{j+1}}(SZ_{j+1})
\\-
\sum_{k=1}^d \bA_{i_j}(Z_{j})\bA_k([S,Y_k])
\bA_{i_{j+1}}(Z_{j+1})
\Big) \\
\bA_{i_{j+2}}(Z_{j+2})
\cdots\bA_{i_{\ell-1}}
(Z_{\ell-1})\bB_{i_{\ell}}(Z_{\ell})=0.
\end{multline*}
Due to the controllability of 
$(\ul{\bA},\ul{\bB})$ and the observability of
$(C,\ul{\bA})$, the last equation is equivalent to the the vanishing of the
middle piece, i.e., to
\begin{multline*}
\bA_{i_j}(Z_jS)
\bA_{i_{j+1}}(Z_{j+1})
-\bA_{i_j}(Z_{j})
\bA_{i_{j+1}}(SZ_{j+1})
\\ =\bA_{i_j}(Z_{j})
\Big(\sum_{k=1}^d\bA_k([S,Y_k])\Big)
\bA_{i_{j+1}}(Z_{j+1}),
\end{multline*}
which is exactly equation
(\ref{eq:12Jun17d}).
\end{proof}

\begin{remark}
\label{rem:29Jul20a}
If 
$\cH$
is an infinite dimensional Hilbert space,
$\bA_k:\KK^{\sts}\rightarrow\cL(\cH)$
and
$\bB_k:\KK^{\sts}\rightarrow\cL(\KK^{s},\cH)$
for every 
$1\le k\le d,\, 
C\in\cL(\cH,\KK^{s})$ 
and 
$D\in\KK^{\sts}$, 
such that 
conditions 
$(\ref{eq:12Jun17a})-(\ref{eq:12Jun17d})$
hold, then the coefficients 
$(f_{\omega})_{\omega\in\cG_d}$ ---
which are given in 
(\ref{eq:14Apr20d})
--- satisfy the 
$\cL\cA$ 
conditions, hence the power series in 
(\ref{eq:12May20c}) defines a nc function on 
$Nilp(\ul{Y})$ and it admits the realization described by 
$(D,C,\ul{\bA},\ul{\bB})$.
This follows from using the same arguments as 
in the proof of Lemma 
\ref{lem:13Dec17a}, where (formally) $\cH$ replaces the space 
$\KK^L$.
\end{remark}
\begin{remark}
\label{rem:10Sep19a}
Notice that even if we remove 
the assumptions on the realization being controllable and observable, we
get that satisfying the
$\cL-\cL\cA$ conditions implies that the series is a nc function, however
these assumptions on the realization are  required for the other direction.
\end{remark}

\begin{remark} 
\label{rem:12Jan20a}
Recall that for every
$n,m\in\NN$
and
$1\le  k\le d$, 
the linear mappings
$\bA_k:\KK^{\sts}\rightarrow\KK^{L\times L}$
and
$\bB_k:\KK^{\sts}\rightarrow\KK^{L\times s}$ 
are naturally extended to mappings 
$\bA_k:\KK^{sn\times sm}\rightarrow
\KK^{Ln\times Lm}$ 
and
$\bB_k:\KK^{sn\times sm}\rightarrow
\KK^{Ln\times sm}$, 
just by acting on the block matrices of size $\sts$. 
It takes some  straightforward computations (which are omitted here) 
to show that if the 
$\cL-\cL\cA$ conditions
hold, then they can be extended to 
$sn\times sm$ 
matrices, i.e., 
we have 
\begin{align}
\label{eq:12Jun17a2}
S(I_m\otimes D)-(I_n\otimes D)S=(I_n\otimes C)
\sum_{k=1}^d 
\big(S(I_m\otimes Y_k)-(I_n\otimes Y_k)S\big)\bB_k,
\end{align}
\begin{multline}
\label{eq:12Jun17b2}
S(I_m\otimes C)
(Z_1)\bB_{i_1}-(I_n\otimes C)
(SZ_1)\bB_{i_1}
\\ =(I_n\otimes C)\Big(\sum_{k=1}^d
(S(I_m\otimes Y_k)-(I_n\otimes Y_k)S)\bA_k\Big)
(Z_1)\bB_{i_1},
\end{multline}
\begin{multline}
\label{eq:12Jun17c2}
S(I_m\otimes C)(Z_1)
\bA_{i_1}-(I_n\otimes C)(SZ_1)\bA_{i_1}
\\ =(I_n\otimes C)\Big(\sum_{k=1}^d
\big(S(I_m\otimes Y_k)-(I_n\otimes Y_k)
S\big)\bA_k\Big)(Z_1)\bA_{i_1},
\end{multline}
\begin{align}
\label{eq:12Jun17f2}
(Z_2S)\bB_{i_2}-(Z_2)\bB_{i_2}S
=(Z_2)\bA_{i_2}\sum_{k=1}^d
\big(S(I_m\otimes Y_k)-
(I_n\otimes Y_k)S\big)\bB_k,
\end{align}
\begin{multline}
\label{eq:12Jun17e2}
(Z_2S)\bA_{i_2}(Z_1)\bB_{i_1}
-(Z_2)\bA_{i_2}
(SZ_1)\bB_{i_1} \\ =(Z_2)\bA_{i_2}
\Big( 
\sum_{k=1}^d
\big(S(I_m\otimes Y_k)-(I_n\otimes Y_k)S
\big)\bA_k\Big)(Z_1)\bB_{i_1},
\end{multline}
and
\begin{multline}
\label{eq:12Jun17d2}
(Z_2S)\bA_{i_2}(Z_1)\bA_{i_1}
-(Z_2)\bA_{i_2}
(SZ_1)\bA_{i_1}
\\ =(Z_2)\bA_{i_2}
\Big( 
\sum_{k=1}^d
\big(S(I_m\otimes Y_k)-(I_n\otimes Y_k)S\big)\bA_k
\Big)(Z_1)\bA_{i_1},
\end{multline}
for every 
$S\in\KK^{sn\times sm},Z_1\in\KK^{sm\times sm},
Z_2\in\KK^{sn\times sn}$,
and 
$1\le i_1,i_2\le d$.
\end{remark}

As a corollary of Lemma
\ref{lem:13Dec17a}
and
Remark
\ref{rem:10Sep19a}, if equations 
$(\ref{eq:12Jun17a})-(\ref{eq:12Jun17d})$ 
hold,
 then the coefficients in the power series expansion of the realization around
$\ul{Y}$ must satisfy the $\cL\cA$ 
conditions, hence the nc 
Fornasini--Marchesini realization
$\cR$ defines a nc function on 
$Nilp(\ul{Y})$.

Moreover, we now show that the realization is a nc function on a larger set,
that is
$\Omega(\cR)$, 
the invertibility set of the realization. 
This will be the first step of our approach in which we start from 
an arbitrary nc Fornasini--Marchesini realization, that is controllable and
observable, for which its coefficients satisfy the
$\cL-\cL\cA$ 
conditions (cf. equations 
$(\ref{eq:12Jun17a})-(\ref{eq:12Jun17d})$) 
and eventually find a nc rational function which this realization admits.
\begin{theorem}
\label{thm:19Oct17a}
Let $\cR$ be a nc Fornasini--Marchesini realization centred at 
$\ul{Y}$ that is described by
$(L;D,C,\ul{\bA},\ul{\bB})$. 
If the coefficients of $\cR$ satisfy the 
$\cL-\cL\cA$ 
conditions, then
$\Omega(\cR)$
is an upper admissible nc subset of
$(\KK^d)_{nc}$ 
and the function
$\cR:\Omega(\cR)\rightarrow 
\KK_{nc}$ that is given by
\begin{align}
\label{eq:26Jun19c}
\cR(\ul{X})=I_m\otimes D+(I_m\otimes C)
\Lambda_{\ul{\bA},\ul{Y}}(\ul{X})^{-1}
\sum_{k=1}^d 
(X_k-I_m\otimes Y_k)\bB_k,
\end{align}
$\forall\ul{X}
\in\Omega_{sm}(\cR),\,
m\in\NN$,
is a nc function 
(that is defined only in levels which are multiples of 
$s$, i.e., if $s\nmid n$ then the domain of $\cR$ at the level of $d-$tuples
of matrices in $\KK^{\ntn}$ is empty), with
$\ul{Y}\in \Omega_s(\cR)$.
\end{theorem}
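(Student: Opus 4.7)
The plan is to handle the structural properties of $\Omega(\cR)$ first, then check that $\cR$ respects direct sums, and finally deal with respect for intertwinings/similarities (which is the substantive step). Throughout, the main tool for the last part is the Taylor--Taylor machinery combined with the extended $\cL-\cL\cA$ identities.

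First, $\ul{Y} \in \Omega_s(\cR)$ is immediate since $\Lambda_{\ul{\bA},\ul{Y}}(\ul{Y}) = I_L$. For the nc-set and upper-admissibility claims, the key observation is that the block-wise extension of each $\bA_k$ defined in (\ref{eq:26May20a}) preserves block upper-triangular structure: if $\ul{X}\in \Omega_{sm}(\cR)$, $\ul{X}'\in\Omega_{sm'}(\cR)$, and $\ul{Z}\in(\KK^{sm\times sm'})^d$, then $\Lambda_{\ul{\bA},\ul{Y}}$ evaluated at $\begin{pmatrix}\ul{X}&\ul{Z}\\0&\ul{X}'\end{pmatrix}$ is block upper-triangular with diagonal blocks $\Lambda_{\ul{\bA},\ul{Y}}(\ul{X})$ and $\Lambda_{\ul{\bA},\ul{Y}}(\ul{X}')$, both invertible, hence itself invertible. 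This proves upper admissibility with $c=1$; taking $\ul{Z}=0$ also gives closure under direct sums. Running the same block-upper-triangular calculation through the full formula (\ref{eq:26Jun19c}) shows the off-diagonal block of $\cR(\ul{X}\oplus\ul{X}')$ is zero, so $\cR(\ul{X}\oplus\ul{X}')=\cR(\ul{X})\oplus\cR(\ul{X}')$ and direct sums are respected.

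The remaining task is respect for intertwinings, which is where I would invoke the $\cL-\cL\cA$ conditions. Locally near $\ul{Y}$, the argument goes through the Taylor--Taylor expansion. By the converse direction of Lemma \ref{lem:13Dec17a} (emphasized in Remark \ref{rem:10Sep19a}), the $\cL-\cL\cA$ conditions imply that the multilinear coefficients $\fR_\omega$ of (\ref{eq:14Apr20c}) satisfy the $\cL\cA$ conditions (\ref{eq:7May18a})--(\ref{eq:7May18c}). Applying \cite[Theorem 5.1.5]{KV1} (mentioned just before Remark \ref{rem:16May20ab}), the associated series $f(\ul{X})=\sum_\omega(\ul{X}-I_m\otimes\ul{Y})^{\odot_s\omega}\fR_\omega$ is then a nc function on $Nilp(\ul{Y})$. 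Since the series is finite on $Nilp(\ul{Y})$ and agrees termwise with the geometric expansion of $\Lambda_{\ul{\bA},\ul{Y}}(\ul{X})^{-1}$ substituted into (\ref{eq:26Jun19c}), we have $\cR|_{Nilp(\ul{Y})}=f$, and so $\cR$ is a nc function on $Nilp(\ul{Y})$.

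To lift this to all of $\Omega(\cR)$, I would verify the intertwining identity $\cR(\ul{X})S=S\cR(\ul{X}')$ directly, for $\ul{X}\in\Omega_{sn}(\cR)$, $\ul{X}'\in\Omega_{sm}(\cR)$, and $S\in\KK^{sn\times sm}$ satisfying $\ul{X}S=S\ul{X}'$, using the extended $\cL-\cL\cA$ identities of Remark \ref{rem:12Jan20a}. Setting $U_k=X_k-I_n\otimes Y_k$, $U_k'=X_k'-I_m\otimes Y_k$, and $\Delta_k=S(I_m\otimes Y_k)-(I_n\otimes Y_k)S$, the hypothesis translates to $U_kS=SU_k'+\Delta_k$. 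One uses (\ref{eq:12Jun17a2}) on the $D$-term, then telescopes through the formal resolvent $\Lambda^{-1}=\sum_{j\ge 0}M^j$ using (\ref{eq:12Jun17b2})--(\ref{eq:12Jun17d2}) to commute $S$ across every occurrence of $\bA_k$ and $\bB_k$; the correction terms produced at each commutation are of the form $(\cdot)\bA_k(\Delta_k)\bB_k(\cdot)$ or $(\cdot)\bA_k(\Delta_k)\bA_k(\cdot)$ and cancel in a telescoping pattern, leaving exactly $\cR(\ul{X})S-S\cR(\ul{X}')=0$. The main obstacle is the combinatorial bookkeeping of these cancellations across the resolvent expansion; the cleanest organization recognizes them as precisely the telescoping that on $Nilp(\ul{Y})$ is encoded by the $\cL\cA$ conditions for the $\fR_\omega$, now promoted to an identity between rational functions on $\Omega_{sn}(\cR)\times\Omega_{sm}(\cR)$.
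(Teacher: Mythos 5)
The first half of your argument (upper admissibility, closedness under direct sums, gradedness, and the block-triangular computation showing $\cR(\ul{X}\oplus\ul{X}')=\cR(\ul{X})\oplus\cR(\ul{X}')$, together with $\ul{Y}\in\Omega_s(\cR)$) is correct and is exactly how the paper proceeds. The gap is in the intertwining step, which is the heart of the theorem. Your mechanism is to expand $\Lambda_{\ul{\bA},\ul{Y}}(\ul{X})^{-1}$ as the formal geometric series $\sum_{j\ge0}M^j$, commute $S$ across each $\bA_k,\bB_k$ using (\ref{eq:12Jun17b2})--(\ref{eq:12Jun17d2}), and then ``promote'' the resulting termwise cancellation --- which is literally valid only on $Nilp(\ul{Y})$, where the series terminates --- to an identity of rational functions on $\Omega_{sn}(\cR)\times\Omega_{sm}(\cR)$. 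That promotion is not justified: over a general field $\KK$ the series neither converges nor terminates on $\Omega(\cR)$, so the telescoping is only an identity of formal expansions, and the density argument you would need fails. For a fixed intertwiner $S$, the relevant set is the linear variety $V_S=\{(\ul{X},\ul{X}'):X_kS=SX_k',\ k=1,\dots,d\}$; the locus where your computation is literally valid (both tuples jointly nilpotent about the centre) is a proper Zariski-closed subset of $V_S$, hence not dense (already for $d=1$, $s=1$, $S=I$ nilpotent matrices are not dense), and the natural expansion point $(I_n\otimes\ul{Y},I_m\otimes\ul{Y})$ need not even lie on $V_S$ since $(I_n\otimes Y_k)S\ne S(I_m\otimes Y_k)$ in general. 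So neither Zariski density nor a series/analytic-continuation argument closes the step, and the ``combinatorial bookkeeping'' you defer is precisely the unproved content.

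The fix is to avoid the series altogether and work with the actual inverse, which is what the paper does: from (\ref{eq:12Jun17f2}) one gets the exact identity $\sum_k(X_k-I_m\otimes Y_k)\bB_k\,T=\sum_k\big(T(\wt X_k-I_{\wt m}\otimes Y_k)\big)\bB_k+\Lambda_{\ul{\bA},\ul{Y}}(\ul{X})M_B$ (with $M_A,M_B$ the commutator-type correction terms), and combining it with (\ref{eq:12Jun17a2}) handles the $D$- and first-order terms; then (\ref{eq:12Jun17e2}) and (\ref{eq:12Jun17b2}) move $T$ across one factor, and (\ref{eq:12Jun17d2}) together with (\ref{eq:12Jun17c2}) yields the key exact interchange $(I_m\otimes C)\Lambda_{\ul{\bA},\ul{Y}}(\ul{X})^{-1}\sum_k\big(T(\wt X_k-I_{\wt m}\otimes Y_k)\big)\bA_k=T(I_{\wt m}\otimes C)\big(\sum_k(\wt X_k-I_{\wt m}\otimes Y_k)\bA_k\big)\Lambda_{\ul{\bA},\ul{Y}}(\ul{\wt X})^{-1}$, after which the pieces assemble in finitely many steps to $\cR(\ul{X})T=T\cR(\ul{\wt X})$ on all of $\Omega(\cR)$, with no nilpotency, convergence, or density assumption. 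Your nilpotent-ball discussion (Lemma \ref{lem:13Dec17a} plus \cite[Theorem 5.1.5]{KV1}) is fine as far as it goes, but it only gives the conclusion on $Nilp(\ul{Y})$ and cannot replace this finite resolvent-level computation.
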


\begin{proof}
\uline{\textbf{$\Omega(\cR)$ 
is an upper admissible nc subset of
$(\KK^d)_{nc}$:}}
If 
$m,\wt{m}\in\NN,\,
\ul{X}\in\Omega_{sm}(\cR),\,
\ul{\wt{X}}\in\Omega_{s\wt{m}}(\cR)$,
and
$\ul{Z}\in(\KK^{sm\times s\wt{m}})^d$,
then
\begin{align*}
\Lambda_{\ul{\bA},\ul{Y}}
\Big(\begin{bmatrix}
\ul{X}&\ul{Z}\\
\ul{0}&\ul{\wt{X}}\\
\end{bmatrix}\Big)
=\begin{bmatrix}
\Lambda_{\ul{\bA},\ul{Y}}(\ul{X})&-\sum_{k=1}^d (Z_k)\bA_k\\
0&\Lambda_{\ul{\bA},\ul{Y}}(\ul{\wt{X}})\\
\end{bmatrix}
\end{align*}
is invertible, i.e., 
$\begin{bmatrix}
\ul{X}&\ul{Z}\\
\ul{0}&\ul{\wt{X}}\\
\end{bmatrix}\in\Omega_{s(m+\wt{m})}(\cR)$.

\uline{\textbf{$\cR$ 
is a nc function on 
$\Omega(\cR)$:}} 
$\cR$ is clearly graded, 
so it is left to show the crucial part which is that \textbf{$\cR$
respects intertwining.} 
Let 
$m,\wt{m}\in\NN,\,  
\ul{X}\in\Omega_{sm}(\cR),\,
\wt{\ul{X}}\in\Omega_{s\wt{m}}(\cR)$,
and
$T\in\KK^{sm\times s\wt{m}}$ 
such that
$\ul{X}\cdot T=T\cdot\wt{\ul{X}}
$, 
i.e., 
that 
$X_kT=T\wt{X}_k$ 
for all 
$1\le k\le d$.
For simplifications let us define the matrices
\begin{align*}
& M_B:=\sum_{k=1}^d
\big(T(I_{\wt{m}}\otimes Y_k)-(I_m\otimes Y_k)T\big)\bB_k,\,
\\ & M_A:=\sum_{k=1}^d
\big(T(I_{\wt{m}}\otimes Y_k)-(I_m\otimes Y_k)T\big)\bA_k
\end{align*}
and recall that as equations 
$(\ref{eq:12Jun17a})-(\ref{eq:12Jun17d})$ 
hold, we know that equations
$(\ref{eq:12Jun17a2})-(\ref{eq:12Jun17d2})$ 
hold as well
(cf. Remark 
\ref{rem:12Jan20a}).  
From
$(\ref{eq:12Jun17f2})$ 
we get 
\begin{multline*}
\sum_{k=1}^d(X_k-I_m\otimes Y_k)\bB_kT
=\sum_{k=1}^d
\big[\big((X_k-I_m\otimes Y_k)T\big)
\bB_k-(X_k-I_m\otimes Y_k)\bA_k
M_B\big]
\\=\sum_{k=1}^d
\big[\big(
T\wt{X}_k-T(I_{\wt{m}}\otimes Y_k)+T(I_{\wt{m}}\otimes Y_k)
-(I_m\otimes Y_k)T
\big)\bB_k
\big]
-\sum_{k=1}^d 
(X_k-I_m\otimes Y_k)\bA_k
M_B
\\=\sum_{k=1}^d 
\big(
T(\wt{X}_k-I_{\wt{m}}\otimes Y_k)
\big)\bB_k+
\Lambda_{\ul{\bA},\ul{Y}}(\ul{X})M_B
\end{multline*}
and hence, using
$(\ref{eq:12Jun17a2})$
\begin{multline*}
\cR(\ul{X})T=(I_m\otimes D)T+(I_m\otimes C)M_B
+(I_m\otimes C)
\Lambda_{\ul{\bA},\ul{Y}}(\ul{X})^{-1}
\sum_{k=1}^d 
\big(
T(\wt{X}_k
-I_{\wt{m}}\otimes Y_k)\big)
\bB_k
\\=T(I_{\wt{m}}\otimes D)+
(I_m\otimes C)
\Lambda_{\ul{\bA},\ul{Y}}(\ul{X})
^{-1}
\sum_{k=1}^d 
\big(
T(\wt{X}_k-I_{\wt{m}}\otimes Y_k)
\big)\bB_k.
\end{multline*}
Next, from 
$(\ref{eq:12Jun17e2})$ it follows that
\begin{multline*}
\Big(\sum_{k=1}^d
\big(
(X_k-I_m\otimes Y_k)T
\big)\bA_k
\Big)
\sum_{k=1}^d
(\wt{X}_k-I_{\wt{m}}\otimes Y_k)\bB_k
\\-
\Big(\sum_{k=1}^d
(X_k-I_m\otimes Y_k)\bA_k
\Big)M_A
\sum_{k=1}^d
(\wt{X}_k-I_{\wt{m}}\otimes Y_k)\bB_k
\\=\Big(\sum_{k=1}^d 
(X_k-I_m\otimes Y_k)\bA_k\Big)
\sum_{k=1}^d 
\big(T(\wt{X}_k-I_{\wt{m}}\otimes Y_k)
\big)\bB_k
\\=-\Lambda_{\ul{\bA},\ul{Y}}(\ul{X})\sum_{k=1}^d 
\big(T(\wt{X}_k-I_{\wt{m}}\otimes Y_k)\big)\bB_k
+\sum_{k=1}^d 
\big(T(\wt{X}_k-I_{\wt{m}}\otimes Y_k)\big)\bB_k,
\end{multline*}
so
\begin{multline*}
\sum_{k=1}^d 
\big(T(\wt{X}_k-I_{\wt{m}}\otimes Y_k)\big)\bB_k
\\=
\Big(\sum_{k=1}^d
\big(
(X_k-I_m\otimes Y_k)T
\big)\bA_k\Big)
\sum_{k=1}^d
(\wt{X}_k-I_{\wt{m}}\otimes Y_k)\bB_k
\\-M_A
\sum_{k=1}^d(\wt{X}_k
-I_{\wt{m}}\otimes Y_k)\bB_k+
\Lambda_{\ul{\bA},\ul{Y}}(\ul{X})
M_A
\sum_{k=1}^d
(\wt{X}_k-I_{\wt{m}}\otimes Y_k)\bB_k
\\ +\Lambda_{\ul{\bA},\ul{Y}}(\ul{X})
\sum_{k=1}^d 
\big(T(\wt{X}_k-I_{\wt{m}}\otimes Y_k)\big)\bB_k
\\=
\Lambda_{\ul{\bA},\ul{Y}}(\ul{X})
\Big( \sum_{k=1}^d 
\big(
T(\wt{X}_k-I_{\wt{m}}\otimes Y_k)
\big)\bB_k
+M_A
\sum_{k=1}^d
(\wt{X}_k-I_{\wt{m}}\otimes Y_k)\bB_k\Big)
\\+\Big(\sum_{k=1}^d
\big((X_k-I_m\otimes Y_k)T
\big)\bA_k
-M_A\Big)
\sum_{k=1}^d
(\wt{X}_k-I_{\wt{m}}\otimes Y_k)\bB_k
\end{multline*}
and hence, using
$(\ref{eq:12Jun17b2})$,
\begin{multline*}
(I_m\otimes C)
\Lambda_{\ul{\bA},\ul{Y}}(\ul{X})^{-1}
\sum_{k=1}^d 
\big(T(\wt{X}_k-I_{\wt{m}}\otimes Y_k)
\big)\bB_k
\\=(I_m\otimes C)\Big[ 
\sum_{k=1}^d 
\big(T(\wt{X}_k-I_{\wt{m}}\otimes Y_k)\big)\bB_k
+M_A
\sum_{k=1}^d
(\wt{X}_k-I_{\wt{m}}\otimes Y_k)
\bB_k\Big]
\\+(I_m\otimes C)\Lambda_{\ul{\bA},\ul{Y}}(\ul{X})^{-1}
\Big(\sum_{k=1}^d
\big((X_k-I_m\otimes
Y_k)T\big)\bA_k-M_A\Big)
\sum_{k=1}^d
(\wt{X}_k-I_{\wt{m}}\otimes Y_k)\bB_k
\\=T(I_{\wt{m}}\otimes C)
\sum_{k=1}^d 
(\wt{X}_k-I_{\wt{m}}\otimes Y_k)\bB_k
\\ +(I_m\otimes C)
\Lambda_{\ul{\bA},\ul{Y}}(\ul{X})^{-1}
\Big(\sum_{k=1}^d
\big(
T(\wt{X}_k-I_{\wt{m}}\otimes Y_k)
\big)\bA_k\Big)
\sum_{k=1}^d
(\wt{X}_k-I_{\wt{m}}\otimes Y_k)\bB_k.
\end{multline*}
From $(\ref{eq:12Jun17d2})$ we know that
\begin{multline*}
\sum_{k=1}^d
\big(
T(\wt{X}_k-I_{\wt{m}}\otimes Y_k)
\big)\bA_k
\\=
\Big(\sum_{k=1}^d
(X_k-I_m\otimes Y_k)\bA_k\Big)
\sum_{k=1}^d
\big(
T(\wt{X}_k-I_{\wt{m}}\otimes Y_k)
\big)\bA_k
+\Lambda_{\ul{\bA},\ul{Y}}(\ul{X})
\sum_{k=1}^d
\big(
T(\wt{X}_k-I_{\wt{m}}\otimes Y_k)
\big)\bA_k
\\=\Big(\sum_{k=1}^d
\big(
(X_k-I_m\otimes Y_k)T
\big)\bA_k\Big)
\sum_{k=1}^d
(\wt{X}_k-I_{\wt{m}}\otimes Y_k)\bA_k\\
-\Big(\sum_{k=1}^d(X_k-I_m\otimes Y_k)\bA_k\Big)
M_A
\sum_{k=1}^d
(\wt{X}_k-I_{\wt{m}}\otimes Y_k)\bA_k
+\Lambda_{\ul{\bA},\ul{Y}}(\ul{X})
\sum_{k=1}^d
\big(
T(\wt{X}_k-I_{\wt{m}}\otimes Y_k)
\big)\bA_k\\
=\Big(\sum_{k=1}^d
\big(
(X_k-I_m\otimes Y_k)T
\big)\bA_k\Big)
\sum_{k=1}^d
(\wt{X}_k-I_{\wt{m}}\otimes Y_k)\bA_k
+\Lambda_{\ul{\bA},\ul{Y}}(\ul{X})M_A
\sum_{k=1}^d
(\wt{X}_k-I_{\wt{m}}\otimes Y_k)\bA_k\\
-M_A
\sum_{k=1}^d
(\wt{X}_k-I_{\wt{m}}\otimes Y_k)\bA_k
+\Lambda_{\ul{\bA},\ul{Y}}(\ul{X})
\sum_{k=1}^d
\big(
T(\wt{X}_k-I_{\wt{m}}\otimes Y_k)
\big)\bA_k
\\=\Big(\sum_{k=1}^d
\big(
(X_k-I_m\otimes Y_k)T
\big)\bA_k
-M_A\Big)
\sum_{k=1}^d
(\wt{X}_k-I_{\wt{m}}\otimes Y_k)\bA_k
\\+\Lambda_{\ul{\bA},\ul{Y}}(\ul{X})
\Big[M_A
\sum_{k=1}^d
(\wt{X}_k-I_{\wt{m}}\otimes Y_k)\bA_k
+\sum_{k=1}^d
\big(
T(\wt{X}_k-I_{\wt{m}}\otimes Y_k)
\big)\bA_k\Big]
\end{multline*}
therefore, by
$(\ref{eq:12Jun17c2})$,
\begin{multline*}
(I_m\otimes C)
\Lambda_{\ul{\bA},\ul{Y}}(\ul{X})^{-1}
\sum_{k=1}^d
\big(
T(\wt{X}_k-I_{\wt{m}}\otimes Y_k)
\big)\bA_k
\\=(I_m\otimes C)
\Lambda_{\ul{\bA},\ul{Y}}(\ul{X})^{-1}
\Big(\sum_{k=1}^d
\big(
T(\wt{X}_k-
I_{\wt{m}}\otimes Y_k)
\big)\bA_k
\Big)\sum_{k=1}^d
(\wt{X}_k-I_{\wt{m}}\otimes Y_k)\bA_k
\\+(I_m\otimes C)\Big[M_A
\sum_{k=1}^d
(\wt{X}_k-I_{\wt{m}}\otimes Y_k)\bA_k
+\sum_{k=1}^d
\big(
T(\wt{X}_k
-I_{\wt{m}}\otimes Y_k)\big)
\bA_k
\Big]
\\=(I_m\otimes C)
\Lambda_{\ul{\bA},\ul{Y}}(\ul{X})^{-1}
\Big(\sum_{k=1}^d
\big(
T(\wt{X}_k-I_{\wt{m}}\otimes Y_k)
\big)\bA_k
\Big)
\sum_{k=1}^d
(\wt{X}_k-I_{\wt{m}}\otimes Y_k)\bA_k
\\ +T(I_{\wt{m}}\otimes C)
\sum_{k=1}^d 
(\wt{X}_k-I_{\wt{m}}\otimes Y_k)\bA_k.
\end{multline*}
Then,
\begin{multline*}
(I_m\otimes C)
\Lambda_{\ul{\bA},\ul{Y}}(\ul{X})^{-1}
\Big(\sum_{k=1}^d
\big(
T(\wt{X}_k-I_{\wt{m}}\otimes Y_k)
\big)\bA_k
\Big)
\Lambda_{\ul{\bA},\ul{Y}}(\ul{\wt{X}})
\\ =T(I_{\wt{m}}\otimes C)
\sum_{k=1}^d 
(\wt{X}_k-I_{\wt{m}}\otimes Y_k)\bA_k,
\end{multline*}
which implies that
\begin{multline*}
(I_m\otimes C)\Lambda_{\ul{\bA},\ul{Y}}(\ul{X})^{-1}
\sum_{k=1}^d
\big(
T(\wt{X}_k-I_{\wt{m}}\otimes Y_k)
\big)\bA_k
\\ =T(I_{\wt{m}}\otimes C)
\Big(
\sum_{k=1}^d 
(\wt{X}_k-I_{\wt{m}}\otimes Y_k)\bA_k\Big)
\Lambda_{\ul{\bA},\ul{Y}}(\ul{\wt{X}})^{-1}.
\end{multline*}
Therefore,
\begin{multline*}
T(I_{\wt{m}}\otimes D)+T(I_{\wt{m}}\otimes C)
\sum_{k=1}^d 
(\wt{X}_k-I_{\wt{m}}\otimes Y_k)\bB_k
\\ +T(I_{\wt{m}}\otimes C)\Big(
\sum_{k=1}^d 
(\wt{X}_k-I_{\wt{m}}\otimes Y_k)\bA_k\Big)
\Lambda_{\ul{\bA},\ul{Y}}(\ul{\wt{X}})^{-1}
\sum_{k=1}^d
(\wt{X}_k-I_{\wt{m}}\otimes Y_k)\bB_k
\\ =T(I_{\wt{m}}\otimes D)+T(I_{\wt{m}}\otimes C)
\Lambda_{\ul{\bA},\ul{Y}}(\ul{\wt{X}})^{-1}
\sum_{k=1}^d
(\wt{X}_k-I_{\wt{m}}\otimes Y_k)\bB_k,
\end{multline*}
i.e.,
$\cR(\ul{X})T=T\cR(\wt{\ul{X}})$.
\end{proof}
\begin{remark}
The idea of that part of the proof, where we showed that $\cR$
respects intertwining, is very similar to the proof of
\cite[Lemma 5.12]{KV1}, with the only difference that instead of a realization
(centred at $\ul{Y}$) 
they had a power series (around
$\ul{Y}$). 
\end{remark}
\section{NC Difference-Differential Calculus}
\label{subsec:cal}
In this section we prove that if the coefficients of a nc Fornasini--Marchesini
realization 
$\cR$, that is controllable and observable,
satisfy
the $\cL-\cL\cA$ conditions, then the set
$\Omega(\cR)$ 
is similarity invariant (cf.
Theorem \ref{thm:6Aug19a}).  It turns out to be an important ingredient 
in obtaining one of our main results (cf. Theorem \ref{thm:3Jun19a}).
\\\\
In the proof of Theorem \ref{thm:6Aug19a} below we use some techniques from the
general theory of nc functions and their difference-differential calculus;
we hereby recall some facts. 

Let $\bA_1,\ldots,\bA_d,\bB_1,\ldots,\bB_d,C,$
and
$D$ satisfy the
$\cL-\cL\cA$ conditions. For every
$1\le j\le d$,
the $j$-th right partial nc difference-differential operator 
$\Delta_j$ 
is defined using the nc difference-differential operator 
$\Delta:=\Delta_R$, by
$$\Delta_{j}f(\ul{X}^1,\ul{X}^2)(Z)
=\Delta f(\ul{X}^1,\ul{X}^2)
(\ul{Z}^{(j)}),$$ 
for every nc function 
$f,\,n_1,n_2\in\NN,\,
\ul{X}^1\in (\KK^{n_1\times n_1})^d$
and
$\ul{X}^2\in(\KK^{n_2\times n_2})^d$ 
in the domain of $f$,
and
$Z\in\KK^{n_1\times n_2}$,
where 
$\ul{Z}^{(j)}:=
(0,\ldots,0,Z,0,\ldots,0)\in (\KK^{n_1\times n_2})^d$ 
is the 
$d-$tuple which consists of all zero matrices except for the matrix $Z$ at
the $j$-th position (see 
\cite[Subsections 2.2-2.6]{KV1} for the precise definitions and properties
of 
$\Delta$ and $\Delta_j$).
To continue, 
we need the following technical lemma.
\begin{lemma}
\label{lem:16Jul19d}
Let $\cR$
be a nc Fornasini--Marchesini realization centred at 
$\ul{Y}\in(\KK^{\sts})^d$ 
that is described by 
$(L;D,C,\ul{\bA},\ul{\bB})$
and
suppose its coefficients satisfy the 
$\cL-\cL\cA$ conditions.
Let 
$\cU=(\KK^{\sts})^d$
and
define 
\begin{align*}
F_1:
\cU_{nc}
\rightarrow
(\KK^{\sts})_{nc},
\text{ by }
F_1(\ul{X})=I_m\otimes D,\,
\forall\ul{X}\in(\KK^{sm\times sm})^d\\ 
F_2:\cU_{nc}
\rightarrow 
(\KK^{s\times L})_{nc},
\text{ by }
F_2(\ul{X})=I_m\otimes C,
\,\forall\ul{X}\in(\KK^{sm\times sm})^d \\
F_3:\Omega(\cR)
\rightarrow
(\KK^{L\times L})_{nc}
,\text{ by }
F_3(\ul{X})=
\Lambda_{\ul{\bA},\ul{Y}}(\ul{X})^{-1},
\,\forall\ul{X}\in\Omega_{sm}(\cR)
\end{align*}
and
\begin{align*}
F_4:
\cU_{nc}
\rightarrow
(\KK^{L\times s})_{nc}
,\text{ by }
F_4(\ul{X})=
\sum_{k=1}^d 
(X_k-I_m\otimes Y_k)\bB_k,
\,\forall\ul{X}\in (\KK^{sm\times sm})^d.
\end{align*}
Then $F_1,F_2$, and 
$F_4$
are nc functions on 
$\cU_{nc}$,
and
$F_3$
is a nc function on
$\Omega(\cR)\subseteq\cU_{nc}$, 
where the nc space is considered 
as a module over $\KK$
and not over $\KK^{\sts}$. 
Moreover, for every
$1\le j\le d$, 
we have $\Delta_j F_1\equiv0,\,
\Delta_j F_2\equiv0$,
$$\Delta_jF_3(\ul{X}^1,
\ul{X}^2)(Z)=F_3(\ul{X}^1)(Z)
\bA_jF_3(\ul{X}^2)
\text{ and }
\Delta_jF_4(\ul{\wt{X}}^1,
\ul{\wt{X}}^2)(Z)=(Z)\bB_j$$
for every 
$m_1,m_2\in\NN,\,
\ul{X}^1\in \Omega_{sm_1}(\cR),\,
\ul{X}^2\in\Omega_{sm_2}(\cR),\,
\ul{\wt{X}}^1\in 
\cU^{m_1\times m_1},\,
\ul{\wt{X}}^2\in
\cU^{m_2\times m_2}$,
and
$Z\in\KK^{sm_1\times sm_2}$.
\end{lemma}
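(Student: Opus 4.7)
The plan is to verify, for each $F_i$, the three defining properties of a nc function (grading, direct sums, similarities) directly from its formula, and then read off $\Delta_j F_i$ from the upper-right block of $F_i$ evaluated on the block-triangular matrix $\begin{pmatrix}\ul{X}^1 & \ul{Z}^{(j)}\\ 0 & \ul{X}^2\end{pmatrix}$. Throughout we use that, since $\cU_{nc}$ is considered over $\KK$, similarity by $T\in\KK^{m\times m}$ at level $m$ acts on the underlying $sm\times sm$ matrices as conjugation by $T\otimes I_s$, while the action on the target $\KK^{r\times r}$-nc space is conjugation by $T\otimes I_r$ for the appropriate $r\in\{s,L\}$. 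Under this convention, and using that the operators $\bA_k,\bB_k$ act block-wise (cf.\ equation \eqref{eq:26May20a}), we have the intertwining identities
\begin{align*}
\big((T\otimes I_s)M(T^{-1}\otimes I_s)\big)\bA_k = (T\otimes I_L)\big((M)\bA_k\big)(T^{-1}\otimes I_L),\\
\big((T\otimes I_s)M(T^{-1}\otimes I_s)\big)\bB_k = (T\otimes I_L)\big((M)\bB_k\big)(T^{-1}\otimes I_s),
\end{align*}
since the scalar entries of $T$ pass through the $\KK$-linear operators $\bA_k,\bB_k$.

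Next I would dispose of $F_1,F_2$: they have the form $I_m\otimes($constant$)$, and the identity $I_{m_1+m_2}\otimes D=(I_{m_1}\otimes D)\oplus(I_{m_2}\otimes D)$ together with $(T\otimes I_s)(I_m\otimes D)(T^{-1}\otimes I_s)=I_m\otimes D$ (and analogously with $I_s$ replaced by $I_L$ for $F_2$) give grading, direct sums, and similarities. For $F_4$, direct sums and similarities both reduce to the above intertwining identities for $\bB_k$, and to the fact that $I_m\otimes Y_k$ decomposes along direct sums and is invariant under conjugation by $T\otimes I_s$. For $F_3$ I would first compute $\Lambda_{\ul{\bA},\ul{Y}}(\ul{X}\oplus\ul{\wt{X}})=\Lambda_{\ul{\bA},\ul{Y}}(\ul{X})\oplus\Lambda_{\ul{\bA},\ul{Y}}(\ul{\wt{X}})$ and $\Lambda_{\ul{\bA},\ul{Y}}(T\!\cdot\!\ul{X}\!\cdot\!T^{-1})=(T\otimes I_L)\Lambda_{\ul{\bA},\ul{Y}}(\ul{X})(T^{-1}\otimes I_L)$, then invert. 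Note that for the similarity axiom we only need the identity to hold when both $\ul{X}$ and $T\!\cdot\!\ul{X}\!\cdot\!T^{-1}$ lie in $\Omega(\cR)$, so we do not need similarity invariance of $\Omega(\cR)$ here (which is the subject of Theorem \ref{thm:6Aug19a}); closure under direct sums was already established in Theorem \ref{thm:19Oct17a}.

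For the difference-differential formulas I would use the standard characterization (\cite[Ch.\ 2]{KV1}): $\Delta_jf(\ul{X}^1,\ul{X}^2)(Z)$ is the upper-right block of $f$ evaluated at the block-triangular point $\begin{pmatrix}\ul{X}^1 & \ul{Z}^{(j)}\\ 0 & \ul{X}^2\end{pmatrix}$. For $F_1,F_2$ this upper-right block is zero. For $F_4$, because $\ul{Z}^{(j)}$ has a nonzero entry only in position $j$, the block-wise action of $\bB_k$ yields upper-right block $(Z)\bB_j$. For $F_3$ one has
\[
\Lambda_{\ul{\bA},\ul{Y}}\!\begin{pmatrix}\ul{X}^1 & \ul{Z}^{(j)}\\ 0 & \ul{X}^2\end{pmatrix}
=\begin{pmatrix}\Lambda(\ul{X}^1) & -(Z)\bA_j\\ 0 & \Lambda(\ul{X}^2)\end{pmatrix},
\]
and inverting this block-triangular invertible matrix by the standard formula gives the upper-right block $F_3(\ul{X}^1)(Z)\bA_j F_3(\ul{X}^2)$, which is the claimed identity.

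The only real subtlety I expect is the bookkeeping for the similarity action: one must consistently distinguish between $T\in\KK^{m\times m}$ acting on the $\KK$-nc space $\cU_{nc}$ (i.e.\ as $T\otimes I_s$ on $sm\times sm$ matrices) and the induced action on the target spaces (as $T\otimes I_s$ or $T\otimes I_L$). Once this is pinned down, the proof is a direct verification; in particular, it does not appeal to the $\cL$-$\cL\cA$ conditions, which are only needed later in Theorem \ref{thm:6Aug19a} to promote $\Omega(\cR)$ to a similarity-invariant set.
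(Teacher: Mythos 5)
Your proposal is correct and follows essentially the same route as the paper's own proof: one checks directly that $F_1,\ldots,F_4$ are graded, respect direct sums, and respect similarities (the key point being that a similarity by $T\in\KK^{\mtm}$ acts as conjugation by $T\otimes I_s$, resp.\ $T\otimes I_L$, and passes through the block-wise $\KK$-linear maps $\bA_k,\bB_k$), and then reads off $\Delta_j F_i$ from the upper-right block of the evaluation at $\begin{pmatrix}\ul{X}^1&\ul{Z}^{(j)}\\ \ul{0}&\ul{X}^2\end{pmatrix}$, inverting the block-triangular pencil in the case of $F_3$. The only nuance is that the paper gets the nc-set structure and upper admissibility of $\Omega(\cR)$ by citing Theorem \ref{thm:19Oct17a}, which is stated under the $\cL-\cL\cA$ hypothesis, although, as you observe, that part of the argument (and the rest of this lemma) does not actually use those conditions.
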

It is easily seen that
$\cR=F_1+F_2F_3F_4,$ 
so the calculations from Lemma
\ref{lem:16Jul19d}  will be used later (cf. Lemma  
\ref{lem:6Aug19b}) to calculate the (higher order) nc derivatives of $\cR$.
Notice that 
while $\cR$ 
is a nc function on a nc subset of
$(\KK^d)_{nc}$
to
$\KK_{nc}$ and is only defined on levels of the form $n=sm$, 
yet the functions $F_1,\ldots,F_4$ are nc functions on nc subsets of
$((\KK^{\sts})^d)_{nc}$ to
$(\KK^{\sts})_{nc}$.
\begin{proof}
In Theorem 
\ref{thm:19Oct17a}
we proved that 
$\Omega(\cR)$ is an upper admissible nc subset of 
$(\cU^d)_{nc}$, while it  is easily seen that 
$F_1, F_2, F_3,$
and
$F_4$ are graded and respect direct sums, 
it is only left to show that they
respect similarities.

Let 
$\ul{X}\in\cU_m$, 
i.e., 
$\ul{X}=(X_1,\ldots,X_d)\in 
(\KK^{sm\times sm})^d$
and let $S\in\KK^{\mtm}$ 
be invertible. We recall the actions of 
$S$ 
(on the left) and of 
$S^{-1}$ (on the right)  on a tuple 
$\ul{X}$, these 
are given by
$$S\cdot\ul{X}\cdot S^{-1}=
\big((S\otimes I_s)X_1
(S^{-1}\otimes I_s),\ldots,
(S\otimes I_s)X_1(S^{-1}\otimes I_s)\big).$$
Therefore
\begin{align*}
S\cdot F_1(\ul{X})\cdot S^{-1}
=(S\otimes I_s)(I_m\otimes D)
(S^{-1}\otimes I_s)
=I_m\otimes D=
F_1(S\cdot\ul{X}\cdot S^{-1}),
\end{align*}  
while from linearity of 
$\bB_k$ 
we get
\begin{multline*}
S\cdot F_4(\ul{X})\cdot S^{-1}=
(S\otimes I_s) 
\Big(\sum_{k=1}^d 
(X_k-I_m\otimes Y_k)\bB_k\Big)
(S^{-1}\otimes I_s)
\\ =\sum_{k=1}^d
((S\otimes I_s)(X_k
-I_m\otimes Y_k)(S^{-1}\otimes I_s))\bB_k
=\sum_{k=1}^d 
(S\cdot X_k\cdot S^{-1}-I_m\otimes Y_k)\bB_k
\\ =F_4(S\cdot\ul{X}\cdot S^{-1})
\end{multline*}
and similarly for 
$F_2$ 
and 
$F_3$, 
with the only difference that for 
$F_2$ we begin with $\ul{X}\in\Omega(\cR)$. 
Moreover, let $\ul{X}^1\in 
\Omega_{sm_1}(\cR),
\ul{X}^2\in\Omega_{sm_2}(\cR)$, 
and 
$Z\in\cU^{m_1\times m_2}$,
thus
\begin{multline*}
F_3\Big(\begin{bmatrix}
\ul{X}^1&\ul{Z}^{(j)}\\
\ul{0}&\ul{X}^2\\
\end{bmatrix}\Big) \\ =
\Big(I_{L(m_1+m_2)}-\sum_{k=1}^d
\Big(\begin{bmatrix}
X^1_k-I_{m_1}\otimes Y_k&\delta_{jk}Z\\
0&X_2^k-I_{m_2}\otimes Y_k\\
\end{bmatrix}\Big)\bA_k\Big)^{-1}
\\=\begin{bmatrix}
I_{Lm_1}-\sum_{k=1}^d
(X_k^1-I_{m_1}\otimes Y_k)
\bA_k&-(Z)\bA_j\\
0&I_{Lm_2}-\sum_{k=1}^d
(X_k^2-I_{m_2}\otimes Y_k)\bA_k
\end{bmatrix}^{-1}
\\=\begin{bmatrix}
F_3(\ul{X}^1)&F_3(\ul{X}^1)(Z)
\bA_jF_3(\ul{X}^2)\\
0&F_3(\ul{X}^2)\\
\end{bmatrix} \\ \Longrightarrow 
\Delta_j F_3(\ul{X}^1,\ul{X}^2)(Z)
=F_3(\ul{X}^1)(Z)\bA_jF_3(\ul{X}^2)
\end{multline*}
and similar calculations hold for $F_4$.
\end{proof}

\begin{definition}
\label{def:12Apr20a}
Let
$\cW$ be a vector space over
$\KK$ and
let 
$2\le k\in\NN$.
For every
$0\le j\le k$, let
$\Gamma_j$ 
be an upper admissible nc subset of
$(\cW^d)_{nc}$,
let
$f_j:\Gamma_j\rightarrow \cW_{nc}$ 
be a nc function, and for every 
$1\le j\le k$, 
let
$h_j:\cW\rightarrow\cW$ 
be a linear mapping.
If 
$m_0,\ldots,m_k\in\NN$ and
$\ul{X}^0\in (\Gamma_0)_{m_0},\ldots, \ul{X}^k\in (\Gamma_k)_{m_k}$, 
we define
$$(f_0\otimes\ldots\otimes f_k)_{h_1,\ldots,h_{k}}
(\ul{X}^0,\ldots,\ul{X}^k):\cW^{m_0\times m_1}\otimes
\cdots\otimes\cW^{m_{k-1}\times m_k}\rightarrow 
\cW^{m_0\times m_k}$$
by
\begin{multline*}
(f_0\otimes\ldots\otimes f_k)_{h_1,\ldots,h_{k}}
(\ul{X}^0,\ldots,\ul{X}^k)(Z^1,\ldots,Z^{k})
\\ :=f_0(\ul{X}^0)h_1(Z^1)\cdots f_{k-1}
(\ul{X}^{k-1})h_{k}
(Z^{k})f_k(\ul{X}^k)
\end{multline*}
for every 
$Z^1\in \cW^{m_0\times m_1},
\ldots,Z^{k}\in\cW^{m_{k-1}\times m_k}$.
\end{definition}
The following useful  
lemma is a generalization of an idea from
\cite[pp. 41--54]{KV1},
while over there only the special case --- in which 
$h_1,\ldots,h_k$ 
are all equal to the identity mapping
--- is treated. 
For the definition of nc functions of order $k$, that is the class 
$\cT^k$, their derivatives and more properties, see
\cite[Subsection 3.1]{KV1}.
\begin{lemma}
\label{lem:12Ap17a}
Let
$\cW$ 
be a vector space over
$\KK$, let $2\le k\in\NN$,
and let 
$f_0,\ldots,f_k,h_1,\ldots,h_k$
and
$\Gamma_0,\ldots,\Gamma_k$ 
be as in Definition 
\ref{def:12Apr20a}.
Then
\begin{align}
\label{eq:16Jul19c}
(f_0\otimes\ldots\otimes f_k)_{h_1,\ldots,h_{k}}
\in\cT^{k}(\Gamma_0,\ldots,\Gamma_k;\, 
\cW_{nc},\ldots,\cW_{nc})
\end{align} 
and
\begin{multline}
\label{eq:16Jul19b}
\Delta_j (f_0\otimes\ldots\otimes f_k)_{h_1,\ldots,
h_{k}}(\ul{X}^0,
\ldots,\ul{X}^{k-1},
c{\ul{X}^k}^{\prime\prime})
(Z^1,\ldots,Z^{k-1},
{Z^k}^{\prime\prime},Z)
\\=
f_0(\ul{X}^0)h_1(Z^1)\cdots
h_{k-1}(Z^{k-1})f_{k-1}(\ul{X}^{k-1}) h_{k}
({Z^k}^{\prime})\Delta_j
f_k({\ul{X}^k}^{\prime},{\ul{X}^k}^{\prime\prime})(Z)
\end{multline}
for every $1\le j\le d$,
$\ul{X}^0\in (\Gamma_0)_{m_0},\ldots,
\ul{X}^{k-1}\in(\Gamma_{k-1})_{m_{k-1}}$,
${\ul{X}^{k}}^{\prime}\in (\Gamma_k)_{m^\prime_k}$,
${\ul{X}^k}^{\prime\prime}\in(\Gamma_k)_{m^{\prime\prime}_k}$,
$Z^1\in \cW^{m_0\times m_1},
\ldots,Z^{k-1}\in\cW^{m_{k-2}\times m_{k-1}}$,
${Z^k}^{\prime}\in\cW^{m_{k-1}\times m^\prime_k}$,
and
$Z\in\cW^{m^\prime_k\times 
m^{\prime\prime}_k}$.
\end{lemma}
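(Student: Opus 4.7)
The plan is to check directly that $F := (f_0 \otimes \cdots \otimes f_k)_{h_1,\ldots,h_k}$ satisfies the defining properties of the class $\cT^k$ from \cite[Subsection 3.1]{KV1}, and then to derive the formula \bref{eq:16Jul19b} by the standard device of evaluating $F$ on a block-triangular matrix inserted in the $k$-th nc argument. Throughout, I would rely on the fact that each linear map $h_j\colon\cW\to\cW$ extends entrywise to matrices over $\cW$, and that this extension automatically commutes with the formation of direct sums, conjugation by invertible scalar matrices, and block decompositions.

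For the membership \bref{eq:16Jul19c} I would verify the three defining properties of $\cT^k$ one at a time. Grading is immediate: if $\ul{X}^i\in(\Gamma_i)_{m_i}$ and $Z^j\in\cW^{m_{j-1}\times m_j}$, then each factor in the product defining $F$ has the correct size, so the product lands in $\cW^{m_0\times m_k}$. For the direct-sum property in the $i$-th nc argument, one expands $\cdots h_i(Z^i)\,f_i(\ul{X}^i\oplus\ul{Y}^i)\,h_{i+1}(Z^{i+1})\cdots$ by splitting $h_i(Z^i)$ as a row of two blocks and $h_{i+1}(Z^{i+1})$ as a column of two blocks, matched to the block structure of $f_i(\ul{X}^i)\oplus f_i(\ul{Y}^i)$; a routine block computation gives the required decomposition as a sum of two values of $F$. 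Respecting intertwinings (equivalently, similarities) in each nc argument is analogous, again using that the entrywise extension of each $h_j$ commutes with conjugation by invertible scalar matrices.

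For the formula \bref{eq:16Jul19b}, I would evaluate $F$ at the point obtained by replacing the $k$-th nc argument with $\begin{pmatrix}\ul{X}^{k\prime} & \ul{Z}^{(j)}\\ \ul{0} & \ul{X}^{k\prime\prime}\end{pmatrix}$, keeping the previous nc arguments fixed and feeding $Z^{k\prime}$ into the $k$-th middle slot. By the defining property of the right nc difference-differential operator (see \cite[Subsections 2.2-2.6]{KV1}), the evaluation of $f_k$ at that block-triangular argument is itself block upper-triangular, with diagonal blocks $f_k(\ul{X}^{k\prime}),f_k(\ul{X}^{k\prime\prime})$ and upper-right block $\Delta_j f_k(\ul{X}^{k\prime},\ul{X}^{k\prime\prime})(Z)$. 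Writing $Z^{k\prime}$ in the corresponding row-block form, the product defining $F$ at this input becomes block upper-triangular, and its top-right block is precisely the right-hand side of \bref{eq:16Jul19b}. Reading this block off from the defining formula of $\Delta_j F$ (which simply extracts the top-right block of $F$ evaluated on block-triangular data in the $k$-th slot) completes the verification.

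The main obstacle --- if one insists on calling it that --- is purely notational bookkeeping: one must check that the entrywise extensions of the $h_j$ are compatible with all the block, direct-sum, and similarity operations used in both parts. The special case $h_j=\mathrm{id}$ for all $j$ is handled in \cite[pp. 41-54]{KV1}, and the general case introduces no genuinely new ingredient, because linearity of $h_j$ alone guarantees that its entrywise extension respects the relevant block structure. Consequently the whole argument is strictly routine once one chooses the right test matrix (block upper-triangular in the $k$-th slot), and no application of the $\cL-\cL\cA$ conditions or of the specific form of $\cR$ is needed for this lemma.
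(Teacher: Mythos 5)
Your proposal is correct and follows essentially the same route as the paper: prove membership in $\cT^{k}$ by direct verification, then evaluate on a block upper-triangular last nc argument with off-diagonal $\ul{Z}^{(j)}$, use that $f_k$ maps it to a block upper-triangular value with top-right entry $\Delta_j f_k(\ul{X}^{k\prime},\ul{X}^{k\prime\prime})(Z)$, and read off the top-right block of the product. The only (harmless) difference is that you pad the last direction slot as $\begin{bmatrix}Z^{k\prime}&0\end{bmatrix}$ so the extra term in \cite[equation (3.19)]{KV1} vanishes by linearity, whereas the paper keeps a general $Z^{k\prime\prime}$ and cancels the matching terms in the comparison $I_{12}=J_{12}$.
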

Notice that the notation introduced in the lemma can be naturally extended
to the case of tensor products of  higher order nc functions, so that the
equality proven in 
(\ref{eq:16Jul19b}) 
can be written as
$$\Delta_j(f_0\otimes\ldots\otimes f_k)_{h_1,\ldots,h_{k}}
=(f_0\otimes\ldots\otimes \Delta_j f_k)_{h_1,\ldots,h_{k}},$$
similarly to
\cite[equations (3.38)--(3.39)]{KV1} . 
We leave the notational details for the reader.
\begin{proof}
Let
$g:=(f_0\otimes\ldots\otimes f_k)_{h_1,\ldots,h_{k}}.$
It is not hard to check  that 
$g
\in\cT^{k}(\Gamma_0,\ldots,\Gamma_k;\, 
\cW_{nc},\ldots,\cW_{nc}),$ 
so we will only prove 
(\ref{eq:16Jul19b}).
Let 
${Z^k}^{\prime\prime}\in\cW^{m_{k-1}
\times m^{\prime\prime}_k}$.
As 
(\ref{eq:16Jul19c})
holds, one can apply \cite[equation (3.19)]{KV1} to obtain that
$$
g\Big(\ul{X}^0,\ldots,\ul{X}^{k-1},
\begin{bmatrix}
{\ul{X}^k}^{\prime}
&\ul{Z}^{(j)}\\ 
0&{\ul{X}^k}^{\prime\prime}\\
\end{bmatrix}
\Big)\Big(Z^1,\ldots,Z^{k-1},
\begin{bmatrix}
{Z^k}^{\prime}&{Z^k}^{\prime\prime}\\
\end{bmatrix}\Big)=
row\begin{bmatrix}
I_{11}&I_{12}\\
\end{bmatrix},$$
where $\ul{Z}^{(j)}=\ul{e}_j^T\otimes Z$,
$
I_{11}=g(\ul{X}^0,\ldots,
\ul{X}^{k-1},{\ul{X}^k}^{\prime})
(Z^1,\ldots,Z^{k-1},{Z^k}^{\prime})$,
and
\begin{multline*}I_{12}=g(\ul{X}^0,\ldots,\ul{X}^{k-1},
{\ul{X}^k}^{\prime\prime})
(Z^1,\ldots,Z^{k-1},{Z^k}^{\prime\prime})
\\+\Delta_j g(\ul{X}^0,\ldots,\ul{X}^{k-1},
{\ul{X}^k}^{\prime},{\ul{X}^k}^{\prime\prime})
(Z^1,\ldots,Z^{k-1},{Z^k}^{\prime},Z).
\end{multline*}
On the other hand, we know that
\begin{multline*}
g\Big(\ul{X}^0,\ldots,\ul{X}^{k-1},
\begin{bmatrix}
{\ul{X}^k}^{\prime}&\ul{Z}^{(j)}\\ 
0&{\ul{X}^k}^{\prime\prime}\\
\end{bmatrix}
\Big)\Big(Z^1,\ldots,Z^{k-1},
\begin{bmatrix}
{Z^k}^{\prime}&{Z^k}^{\prime\prime}\\
\end{bmatrix}\Big)
\\ =f_0(\ul{X}^0)h_1(Z^1)\cdots f_{k-1}(\ul{X}^{k-1})
h_k(\begin{bmatrix}
{Z^k}^{\prime}&{Z^k}^{\prime\prime}\\
\end{bmatrix})f_k\Big(\begin{bmatrix}
{\ul{X}^k}^{\prime}&\ul{Z}^{(j)}\\ 
0&{\ul{X}^k}^{\prime\prime}\\
\end{bmatrix}\Big)
\\ =f_0(\ul{X}^0)h_1(Z^1)\cdots f_{k-1}(\ul{X}^{k-1})
\begin{bmatrix}
h_k({Z^k}^{\prime})&h_k({Z^k}^{\prime\prime})\\
\end{bmatrix}
\begin{bmatrix}
f_k({\ul{X}^k}^{\prime})&\Delta_j 
f_k({\ul{X}^k}^{\prime},
{\ul{X}^k}^{\prime\prime})(Z)\\ 
0&f_k({\ul{X}^k}^{\prime\prime})\\
\end{bmatrix}
\\ =row
\begin{bmatrix}
J_{11}&J_{12}\\
\end{bmatrix},
\end{multline*}
where it is easily seen that $J_{11}=I_{11}$ and
\begin{multline*}
J_{12}
=f_0(\ul{X}^0)h_1(Z^1)\cdots f_{k-1}(\ul{X}^{k-1})
\\ \Big(h_k({Z^k}^{\prime})\Delta_j 
f_k({\ul{X}^k}^{\prime},{\ul{X}^k}^{\prime\prime})(Z)
+h_k({Z^k}^{\prime\prime})f_k({\ul{X}^k}^{\prime\prime})\Big)
\\=g(\ul{X}^0,\ldots,\ul{X}^{k-1},
{\ul{X}^k}^{\prime\prime})
(Z^1,\ldots,Z^{k-1},{Z^k}^{\prime\prime})
\\ +f_0(\ul{X}^0)h_1(Z^1)\ldots
h_{k-1}(Z^{k-1})f_{k-1}(\ul{X}^{k-1}) 
h_{k}
({Z^k}^{\prime})\Delta_j
f_k({\ul{X}^k}^{\prime},
{\ul{X}^k}^{\prime\prime})(Z).
\end{multline*}
As $I_{12}=J_{12}$, we get 
equation (\ref{eq:16Jul19b}).
\end{proof}
As
$\cR:\Omega(\cR)\rightarrow \KK_{nc}$
is a nc function and 
$\Omega(\cR)$ 
is an upper admissible nc set, it makes sense to consider the difference-differential
operators on $\cR$, these are 
$\Delta_j\cR$ 
for
$1\le j\le d$ 
and even higher order derivatives 
$\Delta^{\omega}\cR=\Delta_{j_k}\Delta_{j_{k-1}}
\ldots\Delta_{j_1}\cR$, where 
$\omega=g_{i_1}\ldots g_{i_\ell}\in\cG_d$.
Recall that
$$\Delta_j(fg)(\ul{X}^1,\ul{X}^2)(Z)
=f(\ul{X}^1)\Delta_jg(\ul{X}^1,\ul{X}^2)(Z)+
\Delta_jf(\ul{X}^1,\ul{X}^2)(Z)g(\ul{X}^2),$$
whenever 
$\ul{X}^1,\ul{X}^2$, and
$Z$
are of appropriate sizes and $f,g$ are nc functions; see
\cite[pp. 23]{KV1} for the proof.
Using Lemma
\ref{lem:16Jul19d}
and Lemma
\ref{lem:12Ap17a}, we get the following important lemma.
\begin{lemma}
\label{lem:6Aug19b}
Let $\cR$
be a nc Fornasini--Marchesini realization centred at 
$\ul{Y}\in\KK^{\sts}$ 
that is described by
$(L;D,C,\ul{\bA},\ul{\bB})$
and
suppose its coefficients satisfy the 
$\cL-\cL\cA$ conditions.
For every 
$m\in\NN,\,\ul{X}\in \Omega_{sm}(\cR),\, 
k\in\NN,\,
Z^1,\ldots,Z^k\in\KK^{sm\times sm}$, 
and 
$1\le j_1,\ldots,j_k\le d$,
we have 
\begin{multline}
\label{eq:6Aug19b}
\Delta_{j_k}\ldots\Delta_{j_1}
\cR(\ul{X},I_m\otimes \ul{Y},\ldots,I_m\otimes \ul{Y},I_m\otimes \ul{Y})
(Z^1,\ldots,Z^k)
\\=(I_{m}\otimes C)F_3(\ul{X})
(Z^1)\bA_{j_1}\cdots(Z^{k-1})\bA_{j_{k-1}}(Z^k)\bB_{j_k}
\end{multline}
and
\begin{multline}
\label{eq:6Aug19c}
\Delta_{j_k}\ldots
\Delta_{j_1}F_3(I_m\otimes\ul{Y},\ldots,
I_m\otimes\ul{Y},\ul{X})(Z^1,\ldots,Z^k)
\\ =(Z^1)\bA_{j_1}\cdots
(Z^k)\bA_{j_k}F_3(\ul{X}),
\end{multline}
where $F_3$ 
is given in Lemma
\ref{lem:16Jul19d}.
\end{lemma}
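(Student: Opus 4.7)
The plan is to prove both identities by induction on $k$, starting with (\ref{eq:6Aug19c}) which feeds directly into the proof of (\ref{eq:6Aug19b}). Two elementary observations drive the specialization at $\ul{Y}$: first, $F_3(I_m\otimes\ul{Y})=\Lambda_{\ul{\bA},\ul{Y}}(I_m\otimes\ul{Y})^{-1}=I_{Lm}$, and second, $F_4(I_m\otimes\ul{Y})=0$, since in that case each $X_k-I_m\otimes Y_k$ vanishes.

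For (\ref{eq:6Aug19c}), I would prove by induction on $k$ the more general formula
\[
\Delta_{j_k}\cdots\Delta_{j_1}F_3(\ul{X}^0,\ldots,\ul{X}^k)(Z^1,\ldots,Z^k)=F_3(\ul{X}^0)(Z^1)\bA_{j_1}F_3(\ul{X}^1)(Z^2)\bA_{j_2}\cdots(Z^k)\bA_{j_k}F_3(\ul{X}^k),
\]
valid whenever all $\ul{X}^i\in\Omega(\cR)$. The base case $k=1$ is precisely the formula for $\Delta_j F_3$ from Lemma \ref{lem:16Jul19d}. For the inductive step, the right-hand side of the hypothesis is exactly of the tensor-product form $(F_3\otimes\cdots\otimes F_3)_{\bA_{j_1},\ldots,\bA_{j_k}}$ of Definition \ref{def:12Apr20a}. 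Applying Lemma \ref{lem:12Ap17a} (equation (\ref{eq:16Jul19b})) with $f_k=F_3$ reduces the computation of $\Delta_{j_{k+1}}$ to computing $\Delta_{j_{k+1}}F_3$ on its last two arguments, which is again Lemma \ref{lem:16Jul19d}. The desired identity (\ref{eq:6Aug19c}) then follows by setting $\ul{X}^0=\cdots=\ul{X}^{k-1}=I_m\otimes\ul{Y}$, which collapses all but the last factor $F_3(\ul{X}^k)=F_3(\ul{X})$ to identities.

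For (\ref{eq:6Aug19b}), write $\cR=F_1+F_2 F_3 F_4$ as in Lemma \ref{lem:16Jul19d}, so that the Leibniz rule for $\Delta_j$ combined with $\Delta F_1\equiv 0$ and $\Delta F_2\equiv 0$ reduces everything to derivatives of $F_3$ and $F_4$. I would prove by induction on $k$ the intermediate general formula
\[
\Delta_{j_k}\cdots\Delta_{j_1}\cR(\ul{X}^0,\ldots,\ul{X}^k)(Z^1,\ldots,Z^k)=T^{(k)}+R^{(k)},
\]
where $T^{(k)}=(I_{m_0}\otimes C)F_3(\ul{X}^0)(Z^1)\bA_{j_1}F_3(\ul{X}^1)\cdots F_3(\ul{X}^{k-1})(Z^k)\bB_{j_k}$ is independent of $\ul{X}^k$, and $R^{(k)}$ is the same string terminating in $\cdots(Z^k)\bA_{j_k}F_3(\ul{X}^k)F_4(\ul{X}^k)$. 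The base case $k=1$ is immediate from the product rule on $F_3 F_4$ and Lemma \ref{lem:16Jul19d}. The inductive step: $T^{(k)}$ has no $\ul{X}^k$ dependence so its $\Delta_{j_{k+1}}$-derivative vanishes, while $R^{(k)}$ is a tensor product in the sense of Definition \ref{def:12Apr20a}, and differentiating its tail $F_3\cdot F_4$ using Lemma \ref{lem:12Ap17a} and the product rule produces exactly $T^{(k+1)}+R^{(k+1)}$. Specializing $\ul{X}^1=\cdots=\ul{X}^k=I_m\otimes\ul{Y}$ kills $R^{(k)}$ (through the factor $F_4(I_m\otimes\ul{Y})=0$) and collapses the middle $F_3(I_m\otimes\ul{Y})=I_{Lm}$ factors in $T^{(k)}$, yielding (\ref{eq:6Aug19b}).

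The main obstacle is purely notational: keeping track of which variable the iterated $\Delta$-operators are expanding and recognizing that both $T^{(k)}$ and $R^{(k)}$ fit the tensor-product template of Definition \ref{def:12Apr20a}, so that Lemma \ref{lem:12Ap17a} performs all of the bookkeeping of higher-order differentiation automatically, rather than having to unfold the product rule by hand at each stage. Once the general formulas are in place, the specialization at $\ul{Y}$ is routine.
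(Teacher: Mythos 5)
Your proposal is correct and follows essentially the same route as the paper: decompose $\cR=F_1+F_2F_3F_4$, compute first derivatives via Lemma \ref{lem:16Jul19d}, and induct on $k$ using the tensor-product formula (\ref{eq:16Jul19b}) of Lemma \ref{lem:12Ap17a}, with your $T^{(k)}+R^{(k)}$ decomposition matching the paper's $(f_1\otimes F_3\otimes\cdots\otimes f_2)+(f_1\otimes F_3\otimes\cdots\otimes f_3)$ with $f_1=F_2F_3$, $f_2=I_s$, $f_3=F_3F_4$, before specializing via $F_3(I_m\otimes\ul{Y})=I_{Lm}$ and $F_4(I_m\otimes\ul{Y})=0$. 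The only (immaterial) difference is that you treat (\ref{eq:6Aug19c}) first, whereas the paper derives it afterwards by the same inductive computation.
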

\begin{proof}
Let  
$F_1,F_2,F_3$, and $F_4$ be as defined in Lemma 
\ref{lem:16Jul19d}, so
$$\cR(\ul{X})=F_1(\ul{X})+
F_2(\ul{X})F_3(\ul{X})F_4(\ul{X}),\,
\forall\ul{X}\in\Omega(\cR).$$
For every 
$\ul{X}^1\in\Omega_{sm_1}(\cR),\, 
\ul{X}^2\in\Omega_{sm_2}(\cR),\, 
Z^1\in\KK^{sm_1\times sm_2}$, 
and 
$1\le j_1\le d$,
\begin{multline*}
\Delta_{j_1}\cR(\ul{X}^1,\ul{X}^2)(Z^1)=
\Delta_{j_1}F_1(\ul{X}^1,\ul{X}^2)(Z^1)
+\Delta_{j_1}(F_2F_3F_4)(\ul{X}^1,\ul{X}^2)(Z^1)
\\=F_2(\ul{X}^1)
\Delta_{j_1}(F_3F_4)
(\ul{X}^1,\ul{X}^2)(Z^1)
+
\Delta_{j_1}F_2(\ul{X}^1,\ul{X}^2)(Z^1) 
F_3(\ul{X}^2)F_4(\ul{X}^2)
\\=F_2(\ul{X}^1)\big[F_3(\ul{X}^1)
\Delta_{j_1}
F_4(\ul{X}^1,\ul{X}^2)(Z^1)
+\Delta_{j_1}
F_3(\ul{X}^1,\ul{X}^2)(Z^1)
F_4(\ul{X}^2)\big]
\\=F_2(\ul{X}^1)\big[
F_3(\ul{X}^1)(Z^1)\bB_{j_1}
+F_3(\ul{X}^1)
(Z^1)\bA_{j_1}F_3(\ul{X}^2)
F_4(\ul{X}^2)\big]
\\=F_2(\ul{X}^1)F_3(\ul{X}^1)
(Z^1)\bB_{j_1}+F_2(\ul{X}^1)
F_3(\ul{X}^1)(Z^1)\bA_{j_1}
F_3(\ul{X}^2)F_4(\ul{X}^2)
\end{multline*}
and in particular, as 
$I_m\otimes \ul{Y}\in 
\Omega_{sm}(\cR)$
and
$F_4(I_m\otimes \ul{Y})=0$, 
$$\Delta_{j_1}\cR(\ul{X}^1,I_m\otimes \ul{Y})(Z^1)=
F_2(\ul{X}^1)F_3(\ul{X}^1)(Z^1)\bB_{j_1}.$$
Using the notations in Lemma 
\ref{lem:12Ap17a}, 
we have
\begin{align*}
\Delta_{j_1}\cR(\ul{X}^1,\ul{X}^2)(Z)=
\big[(f_1\otimes f_2)_{\bB_{j_1}}
+(f_1\otimes f_3)_{\bA_{j_1}}\big]
(\ul{X}^1,\ul{X}^2)(Z),
\end{align*}
i.e., 
$\Delta_{j_1}\cR
=(f_1\otimes f_2)_{\bB_{j_1}}
+(f_1\otimes f_3)_{\bA_{j_1}}$,
where 
$f_1=F_2F_3,
f_2=I_s$ 
and 
$f_3=F_3F_4$ 
are nc functions. So
\begin{multline*}
\Delta_{j_2}\Delta_{j_1}
\cR(\ul{X}^1,\ul{X}^2,\ul{X}^3)(Z^1,Z^2) \\
=\Delta_{j_2}
((f_1\otimes f_2)_{\bB_{j_1}}+
(f_1\otimes f_3)_{\bA_{j_1}})
(\ul{X}^1,\ul{X}^2,\ul{X}^3)(Z^1,Z^2)
\\=f_1(\ul{X}^1)(Z^1)\bB_{j_1}
\Delta_{j_2}f_2
(\ul{X}^2,\ul{X}^3)(Z^2)+f_1(\ul{X}^1)(Z^1)
\bA_{j_1}\Delta_{j_2}f_3
(\ul{X}^2,\ul{X}^3)(Z^2)
\\=f_1(\ul{X}^1)
(Z^1)
\bA_{j_1} \big[F_3(\ul{X}^2)
\Delta_{j_2}F_4(\ul{X}^2,\ul{X}^3)(Z^2)
+\Delta_{j_2}F_3(\ul{X}^2,
\ul{X}^3)(Z^2)F_4(\ul{X}^3)\big]
\\=f_1(\ul{X}^1)(Z^1)\bA_{j_1}
\big[F_3(\ul{X}^2)
(Z^2)\bB_{j_2}+F_3(\ul{X}^2)(Z^2)
\bA_{j_2}
F_3(\ul{X}^3)F_4(\ul{X}^3)\big]
\\=\big[(f_1\otimes F_3
\otimes f_2)_{\bA_{j_1},\bB_{j_2}}
+(f_1\otimes F_3\otimes 
f_3)_{\bA_{j_1},\bA_{j_2}}\big]
(\ul{X}^1,\ul{X}^2,\ul{X}^3)(Z^1,Z^2),
\end{multline*}
i.e., 
\begin{align*}
\Delta_{j_2}\Delta_{j_1}\cR
=(f_1\otimes F_3\otimes 
f_2)_{\bA_{j_1},\bB_{j_2}}
+(f_1\otimes F_3\otimes 
f_3)_{\bA_{j_1},\bA_{j_2}}.
\end{align*}
We proceed the proof by induction (on 
$k$). 
Suppose that
\begin{multline*}
\Delta_{j_k}\ldots\Delta_{j_1}
\cR(\ul{X}^1,\ldots,\ul{X}^{k+1})
(Z^1,\ldots,Z^k)
\\=(f_1\otimes F_3
\otimes\ldots\otimes F_3
\otimes f_2)_{\bA_{j_1},\ldots,
\bA_{j_{k-1}},
\bB_{j_k}}(\ul{X}^1,\ldots,\ul{X}^{k+1})
(Z^1,\ldots,Z^k)
\\+(f_1\otimes F_3
\otimes\ldots\otimes
F_3\otimes f_3)_{\bA_{j_1},\ldots,
\bA_{j_k}}(\ul{X}^1,\ldots,\ul{X}^{k+1})
(Z^1,\ldots,Z^k),
\end{multline*}
i.e., that
\begin{multline*}
\Delta_{j_k}\ldots
\Delta_{j_1}\cR
=(f_1\otimes F\otimes\ldots
\otimes F_3
\otimes f_2)_{\bA_{j_1},\ldots,
\bA_{j_{k-1}},
\bB_{j_k}} \\ +(f_1\otimes F_3
\otimes\ldots\otimes
F_3\otimes f_3)_{\bA_{j_1},
\ldots,\bA_{j_k}}
\end{multline*}
and hence --- using 
(\ref{eq:16Jul19b}) in Lemma 
\ref{lem:12Ap17a} ---
\begin{multline*}
\Delta_{j_{k+1}}\Delta_{j_k}\ldots
\Delta_{j_1}
\cR(\ul{X}^1,\ldots,\ul{X}^{k+2})
(Z^1,\ldots,Z^{k+1})
\\
=\Delta_{j_{k+1}}\big[(f_1\otimes F_3
\otimes\ldots\otimes F_3\otimes  
f_2)_{\bA_{j_1},\ldots,\bA_{j_{k-1}},\bB_{j_k}}
\\+
(f_1\otimes F_3
\otimes\ldots\otimes F_3\otimes f_3)_{\bA_{j_1},
\ldots,\bA_{j_k}}\big]
(\ul{X}^1,\ldots,\ul{X}^{k+2})
(Z^1,\ldots,Z^{k+1})
\\=
f_1(\ul{X}^{1})(Z^1)\bA_{j_1}
\ldots(Z^{k-1})\bA_{j_{k-1}} 
F_3(\ul{X}^k)
\big[(Z^k)
\bB_{j_k}\Delta_{j_{k+1}} f_2(\ul{X}^{k+1},\ul{X}^{k+2})(Z^{k+1})
\\+(Z^k)\bA_{j_k}\Delta_{j_{k+1}}f_3(\ul{X}^{k+1},
\ul{X}^{k+2})(Z^{k+1})\big]
\\=f_1(\ul{X}^{1})(Z^1)\bA_{j_1}
\cdots
(Z^{k-1})\bA_{j_{k-1}}F_3(\ul{X}^k)
(Z^k)\bA_{j_k}\\
\big[F_3(\ul{X}^{k+1})
(Z^{k+1})\bB_{j_{k+1}}+F_3(\ul{X}^{k+1})(Z^{k+1})
\bA_{j_{k+1}}F_3(\ul{X}^{k+2})F_4(\ul{X}^{k+2})\big]
\\=
\big[(f_1\otimes F_3\otimes\ldots\otimes F_3\otimes 
f_2)_{\bA_{j_1},\ldots,\bA_{j_k},\bB_{j_{k+1}}}
\\
+(f_1\otimes F_3\otimes\ldots\otimes F_3\otimes 
f_3)_{\bA_{j_1},\ldots,\bA_{j_{k+1}}}\big]
(\ul{X}^1,\ldots,\ul{X}^{k+2})
(Z^1,\ldots,Z^{k+1}),
\end{multline*}
i.e.,
\begin{multline*}
\Delta_{j_{k+1}}\ldots\Delta_{j_1}
\cR=(f_1\otimes F_3\otimes\ldots\otimes F_3\otimes 
f_2)_{\bA_{j_1},\ldots,\bA_{j_k},\bB_{j_{k+1}}}
\\
+(f_1\otimes F_3\otimes\ldots\otimes F_3\otimes 
f_3)_{\bA_{j_1},\ldots,\bA_{j_{k+1}}},
\end{multline*}
which ends the proof by induction.
Since 
$F_3(I_m\otimes\ul{Y})=I_{Lm}$, 
we know 
\begin{multline*}
\Delta_{j_k}\ldots\Delta_{j_1}
\cR(\ul{X}^1,I_m\otimes \ul{Y},\ldots,I_m\otimes \ul{Y},\ul{X}^{k+1})
(Z^1,\ldots,Z^k)
\\=f_1(\ul{X}^1)(Z^1)
\bA_{j_1}\cdots(Z^{k-1})
\bA_{j_{k-1}}(Z^k)\bB_{j_k}
\\
+f_1(\ul{X}^1)(Z^1)\bA_{j_1}\ldots(Z^k)
\bA_{j_k}f_3(\ul{X}^{k+1}).
\end{multline*}
Therefore, as
$f_3(I_m\otimes \ul{Y})=0$, 
we obtain formula 
(\ref{eq:6Aug19b}).
Next, similarly to the above computations, one easily gets
\begin{multline*}
\Delta_{j_k}\ldots\Delta_{j_1}
F_3(\ul{X}^1,\ldots,\ul{X}^{k+1})(Z^1,\ldots,Z^k)
\\
=
F_3(\ul{X}^1)(Z^1)\bA_{j_1}F_3(\ul{X}^2)
\cdots F_3(\ul{X}^k)(Z^k)\bA_{j_k}F_3(\ul{X}^{k+1})
\end{multline*}
and hence by taking 
$\ul{X}^1=\ldots=\ul{X}^k=I_m\otimes\ul{Y},$ we obtain
(\ref{eq:6Aug19c}).
\end{proof}
Finally, we are in a position to take an advantage of the nc difference-differential
calculus analysis developed in this section and prove that 
$\Omega(\cR)$ is similarity invariant,
a property that will be quite useful for us later on (cf. the proof of Theorem
\ref{thm:26Jun19a}).
\begin{theorem}
\label{thm:6Aug19a}
Let $\cR$
be a nc Fornasini--Marchesini realization centred at 
$\ul{Y}\in\KK^{\sts}$ 
that is described by 
$(L;D,C,\ul{\bA},\ul{\bB})$. 
Suppose 
$\cR$ 
is controllable and observable, and 
its coefficients satisfy the 
$\cL-\cL\cA$ conditions.
Then
$\Omega(\cR)$ is similarity invariant, i.e., 
$\ul{X}\in\Omega_{sm}(\cR)\Longrightarrow 
S\cdot\ul{X}\cdot S^{-1}\in\Omega_{sm}(\cR)$
for any invertible 
$S\in\KK^{sm\times sm}$ and $m\in\NN$. 
\end{theorem}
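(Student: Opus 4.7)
Fix $\ul{X}\in\Omega_{sm}(\cR)$ and an invertible $T\in\KK^{sm\times sm}$; set $\ul{X}':=T\cdot\ul{X}\cdot T^{-1}$. The goal is to show that $\Lambda_{\ul{\bA},\ul{Y}}(\ul{X}')$ is invertible, i.e., $\ul{X}'\in\Omega_{sm}(\cR)$. The subtle point is that each $\bA_k$ acts block-wise on $s\times s$ blocks, so the pencil does \emph{not} conjugate by $T$ in an obvious way: in general $(TX_kT^{-1})\bA_k\ne(T\otimes I_L)((X_k)\bA_k)(T^{-1}\otimes I_L)$, unless $T$ happens to be of the special form $S\otimes I_s$. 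The hypotheses on $\cR$ enter precisely to circumvent this obstruction.

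The plan is to exhibit a two-sided inverse $M\in\KK^{Lm\times Lm}$ of $\Lambda_{\ul{\bA},\ul{Y}}(\ul{X}')$ built from $F_3(\ul{X})=\Lambda_{\ul{\bA},\ul{Y}}(\ul{X})^{-1}$ together with the operators $\bA_k$ and commutators involving $T$. The relevant structure is provided by the nc difference-differential calculus developed in this section: by Lemma~\ref{lem:16Jul19d}, $F_3$ is a nc function on $\Omega(\cR)$, and by Lemma~\ref{lem:6Aug19b} its higher derivatives evaluated at chains terminating at $\ul{X}$ are computed as explicit products involving $F_3(\ul{X})$ and the $\bA_k$. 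Moreover, by upper admissibility of $\Omega(\cR)$ (as already verified in the proof of Theorem~\ref{thm:19Oct17a}), every block matrix $\bigl[\begin{smallmatrix}\ul{X}&Z\\0&\ul{X}\end{smallmatrix}\bigr]$ belongs to $\Omega_{2sm}(\cR)$ for arbitrary $Z$; applying $\cR$ and $F_3$ to similarity transformations of $\ul{X}\oplus\ul{X}$ by matrices of the form $\bigl[\begin{smallmatrix}I&0\\0&T\end{smallmatrix}\bigr]$ (within the admissibility regime) produces formal relations between $F_3(\ul{X})$ and the candidate $F_3(\ul{X}')$. Comparing these with the extended $\cL-\cL\cA$ identities of Remark~\ref{rem:12Jan20a} (applied with $S=T$, $S=T^{-1}$, and their compositions) dictates the precise form of $M$.

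The main obstacle is then verifying $M\cdot\Lambda_{\ul{\bA},\ul{Y}}(\ul{X}')=I_{Lm}=\Lambda_{\ul{\bA},\ul{Y}}(\ul{X}')\cdot M$. A priori, the nc function property of $\cR$ yields the desired relations only after conjugation by the outer factors $I_m\otimes C$ and $\sum_{k=1}^d(\cdot)\bB_k$ coming from the realization formula, because $\cR$ sandwiches $F_3$ between precisely these factors. This is exactly where controllability of $(\ul{\bA},\ul{\bB})$ and observability of $(C,\ul{\bA})$ are essential: controllability guarantees that the images of $(Z^1,\ldots,Z^{k-1},Z^k)\mapsto\ul{\bA}^{\omega}(Z^1,\ldots,Z^{k-1})\bB_{j}(Z^k)$ span $\KK^{L}$, while observability guarantees that the kernels of $(Z^1,\ldots,Z^k)\mapsto C\,\ul{\bA}^{\omega}(Z^1,\ldots,Z^k)$ intersect trivially; together they allow every identity that holds modulo the outer $C$ and $\bB_k$ factors to be lifted to an identity at the level of the pencil itself.

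Once this lifting is established, the verification that $M$ is a two-sided inverse becomes a direct computation in the same spirit as the proof of Theorem~\ref{thm:19Oct17a}: one repeatedly substitutes the extended $\cL-\cL\cA$ identities (\ref{eq:12Jun17a2})--(\ref{eq:12Jun17d2}) with $S$ chosen to produce the commutators of $T$ with $I_m\otimes Y_k$ and to migrate $T$ through the operators $(X_k-I_m\otimes Y_k)\bA_k$, while the $F_3(\ul{X})$ factor absorbs the residual pencil. The resulting cancellations exhibit $M$ as a two-sided inverse of $\Lambda_{\ul{\bA},\ul{Y}}(\ul{X}')$, proving $\ul{X}'\in\Omega_{sm}(\cR)$ and thus the similarity invariance of $\Omega(\cR)$.
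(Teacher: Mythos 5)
There is a genuine gap, and it sits exactly where your proposal is vaguest. Everything you propose to do at the new point $\ul{X}'=T\cdot\ul{X}\cdot T^{-1}$ — applying $\cR$ or $F_3$ to conjugates of $\ul{X}\oplus\ul{X}$ by $\mathrm{diag}(I,T)$, reading off "formal relations between $F_3(\ul{X})$ and the candidate $F_3(\ul{X}')$" — presupposes that $\ul{X}'$ (equivalently, the block matrix with diagonal blocks $\ul{X}$ and $\ul{X}'$) already lies in $\Omega(\cR)$: the nc-function property "respects similarities" only applies when the conjugated point is known to be in the domain, and $F_3(\ul{X}')=\Lambda_{\ul{\bA},\ul{Y}}(\ul{X}')^{-1}$ is not even defined before invertibility of the pencil at $\ul{X}'$ is established. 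So the route as described is circular. Moreover, you never actually write down the candidate inverse $M$, and the claim that controllability and observability let one "lift every identity that holds modulo the outer $C$ and $\bB_k$ factors to the level of the pencil" is not by itself an argument: observability gives trivial joint kernel over \emph{all} words and arguments, which does not let you cancel the single fixed factor $(I_m\otimes C)\Lambda_{\ul{\bA},\ul{Y}}(\ul{X})^{-1}$ from one identity. The actual mechanism (which the paper uses) is to take higher-order derivatives of $\cR$, via Lemma \ref{lem:6Aug19b}, so as to assemble finite pieces $\fc_{\ell}$, $\fo_{\ell}$ of the controllability and observability matrices, invoke their one-sided inverses $\fc^{(R)}$, $\fo^{(L)}$ (from \cite[Proposition 2.10]{PV1}), and obtain the identity $\fo^{(L)}\Delta_{\Delta_\cR}^{(\ell)}(\ul{X})\fc^{(R)}\Lambda_{\ul{\bA},\ul{Y}}(\ul{X})=I_{Lm}$ on $\Omega_{sm}(\cR)$.

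Even granting that identity, your sketch has no device for transporting it to $\ul{X}'$ when $\ul{X}'\notin\Omega_{sm}(\cR)$ is still possible. The paper bridges this by extending $\cR$ to the unique nc function $\wt{\cR}$ on the similarity-invariant envelope $\wt{\Omega}(\cR)$ (cf. \cite[Appendix A]{KV1}), so that $\Delta_{\Delta_{\wt{\cR}}}^{(\ell)}(S\cdot\ul{X}\cdot S^{-1})$ is defined for \emph{every} invertible $S$, and then by a Zariski-density argument: the map $\varphi(S)=\fo^{(L)}\Delta_{\Delta_{\wt{\cR}}}^{(\ell)}(S\cdot\ul{X}\cdot S^{-1})\fc^{(R)}\Lambda_{\ul{\bA},\ul{Y}}(S\cdot\ul{X}\cdot S^{-1})-I_{Lm}$ is polynomial in the entries of $S$ and $1/\det(S)$, vanishes on the nonempty Zariski-open set where $\Lambda_{\ul{\bA},\ul{Y}}(S\cdot\ul{X}\cdot S^{-1})$ is invertible, hence vanishes for all invertible $S$; this exhibits a one-sided (hence two-sided, over $\KK$) inverse of the pencil at $S\cdot\ul{X}\cdot S^{-1}$. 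Your proposal contains neither the passage to $\wt{\Omega}(\cR)$ nor any substitute for the density argument, and the analogy with the intertwining computation of Theorem \ref{thm:19Oct17a} does not supply one, since that computation assumes both points lie in $\Omega(\cR)$ from the outset. To make your "explicit two-sided inverse $M$" strategy work, you would have to actually produce $M$ and verify $M\Lambda_{\ul{\bA},\ul{Y}}(\ul{X}')=\Lambda_{\ul{\bA},\ul{Y}}(\ul{X}')M=I_{Lm}$ purely algebraically from (\ref{eq:12Jun17a2})--(\ref{eq:12Jun17d2}); as written, that is asserted rather than done.
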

For the purpose of the proof,
we recall the definitions of the controllability and observability matrices, as they appear in 
\cite[Subsection 2.2]{PV1}.
The infinite block matrix 
\begin{align*}
\fC_{\ul{\bA},\ul{\bB}}:=
row\begin{bmatrix}
\fC_{\ul{\bA},\ul{\bB}}^{(\omega,k)}\\
\end{bmatrix}_{(\omega,k)\in
\cG_d\times\{1,\ldots,d\}}
\end{align*}
is the controllability matrix
associated with 
$(\ul{\bA},\ul{\bB})$,
where 
\begin{align*}
\fC_{\ul{\bA},\ul{\bB}}^{(\omega,k)}
\in\KK^{L\times s^3(|\omega|+1)}
\text{ is given by }
\fC_{\ul{\bA},\ul{\bB}}^{(\omega,k)}
:=row
\begin{bmatrix}
(\ul{\bA}^\omega
\cdot\bB_k)(\ul{Z})
\end{bmatrix}_{\ul{Z}
\in\cE_s^{|\omega|+1}}
\end{align*}
for each 
$(\omega,k)\in
\cG_d\times \{1,\ldots,d\}$.
The infinite block matrix
\begin{align*}
\fO_{C,\ul{\bA}}:=col
\begin{bmatrix}
\fO_{C,\ul{\bA}}^{(\omega)}
\end{bmatrix}_{\omega\in\cG_d}
\end{align*}
is the observability matrix 
associated to
$(C,\ul{\bA})$, where
\begin{align*}
\fO_{C,\ul{\bA}}^{(\omega)}\in
\KK^{s^{3}|\omega|\times L}
\text{ is given by }
\fO_{C,\ul{\bA}}^{(\omega)}:=col
\begin{bmatrix}
C\cdot\ul{\bA}^\omega(\ul{Z})
\end{bmatrix}_{\ul{Z}\in\cE_s^{|\omega|}}
\end{align*}
 for each
$\omega\in\cG_d$.
\begin{proof}
If 
$m\in\NN$ 
and
$\ul{X}\in\Omega_{sm}(\cR)$,  
then 
$\Lambda_{\ul{\bA},\ul{Y}}(\ul{X})$ is invertible.
As 
$(C,\ul{\bA})$ is observable and $(\ul{\bA},\ul{\bB})$
is controllable, it follows from 
\cite[Proposition
2.10]{PV1}
that there exist
$\ell\in\NN$ 
for which the matrix 
$$\fc_{\ell}:=
row 
\begin{bmatrix}
\fC_{\ul{\bA},\ul{\bB}}^{(\omega,k)}\\
\end{bmatrix}_{|\omega|\le\ell,1\le k\le d}$$ 
is right invertible and the matrix 
$$\fo_{\ell}:=
col
\begin{bmatrix}\fO_{C,\ul{\bA}}^{(\omega)}\\
\end{bmatrix}_{|\omega|\le\ell}$$
is left invertible. Let us denote a right inverse of 
$\fc_{\ell}$ by 
$\fc^{(R)}$ 
and a left inverse of 
$\fo_{\ell}$ by $\fo^{(L)}$,
thus
$\fc_{\ell}
\fc^{(R)}=
\fo^{(L)}\fo_{\ell}=I_L.$
As 
$\ul{X}\in\Omega_{sm}(\cR)$, 
it follows from Lemma 
\ref{lem:6Aug19b}
that
\begin{multline*}
\Delta_{j_k}\ldots\Delta_{j_1}
\cR(\ul{X},I_m\otimes \ul{Y},
\ldots,I_m\otimes \ul{Y},
I_m\otimes \ul{Y})
(Z^1,\ldots,Z^k)\\=(I_m\otimes C)
\Lambda_{\ul{\bA},\ul{Y}}(\ul{X})^{-1}
(Z^1,\ldots,Z^{k-1})\bA^{\omega_1}(Z^k)\bB_{j_k}
\end{multline*}
for every choice of 
$k\ge 1,\,1\le j_1,\ldots,j_k\le d$,
and
$Z^1,\ldots,Z^k\in\cE_{sm}$,
therefore when   considering all words 
$\omega\in\cG_d$ with $|\omega|\le\ell$ 
and putting them all together (as blocks) in a (row) matrix, we get 
\begin{align}
\label{eq:3Mar19a}
\Delta_{\cR}^{(\ell)}(\ul{X})
=(I_m\otimes C)
\Lambda_{\ul{\bA},\ul{Y}}
(\ul{X})^{-1}\fc_{\ell},
\end{align}
where 
\begin{multline*}
\Delta_{\cR}^{(\ell)}(\ul{X}):=row
\begin{bmatrix}
\Delta_{\cR}^{(\omega)}(\ul{X})\\
\end{bmatrix}_{|\omega|\le\ell} 
\text{ and }\\
\Delta_{\cR}^{(\omega)}(\ul{X}):=
row
\begin{bmatrix}
\Delta^{\omega}\cR
\Big(\ul{X},I_m\otimes \ul{Y},\ldots, 
I_m\otimes \ul{Y}\Big)
\Big(Z^1,\ldots,Z^{|\omega|}\Big)\\
\end{bmatrix}_{Z^1,\ldots,Z^{|\omega|}\in
\cE_{sm}}.
\end{multline*}
Multiplying both sides of
(\ref{eq:3Mar19a}) on the right by 
$\fc^{(R)}$, to obtain
\begin{align}
\label{eq:14Oct17b}
\Delta_{\cR}^{(\ell)}(\ul{X})
\fc^{(R)}=(I_m\otimes C)
\Lambda_{\ul{\bA},\ul{Y}}(\ul{X})^{-1}.
\end{align} 
Next, we consider all higher derivatives w.r.t
words of length at most 
$\ell$ 
of the nc function
$\Delta_{\cR}^{(\ell)}(\cdot)$; by putting them all together (as blocks) in a (column) matrix, and by using 
(\ref{eq:14Oct17b}) 
and
(\ref{eq:6Aug19c}), 
we observe that
\begin{align}
\label{eq:3Mar19c}
\Delta_{\Delta_\cR}^{(\ell)}(\ul{X})\fc^{(R)}
=\fo_{\ell}
\Lambda_{\ul{\bA},\ul{Y}}(\ul{X})^{-1},
\end{align}
where
\begin{multline*}
\Delta_{\Delta_\cR}^{(\ell)}(\ul{X}):=col
\begin{bmatrix}
\Delta_{\Delta_\cR}^{(\omega)}(\ul{X})\\
\end{bmatrix}_{|\omega|\le\ell}
\\ 
\text{ and }
\Delta_{\Delta_\cR}^{(\omega)}(\ul{X})=col
\begin{bmatrix}
\Delta^{\omega}\Delta_{\cR}^{(\ell)}
\Big(\ul{Y}^{\odot_{sm}|\omega|},
\ul{X}\Big)\Big(\ul{Z}^{\odot_{sm}|\omega|}\Big)\\
\end{bmatrix}_{\ul{Z}\in\cE_{sm}^{|\omega|}}.
\end{multline*}
Multiplying both sides of
(\ref{eq:3Mar19c}) on the left by 
$\fo^{(L)}$, to obtain
$\fo^{(L)}
\Delta_{\Delta_\cR}^{(\ell)}(\ul{X})
\fc^{(R)}=
\Lambda_{\ul{\bA},\ul{Y}}(\ul{X})^{-1}$,
i.e., for every 
$\ul{X}\in \Omega_{sm}(\cR)$ 
we showed that
\begin{align}
\label{eq:3Mar19d}
\fo^{(L)}
\Delta_{\Delta_\cR}^{(\ell)}(\ul{X})
\fc^{(R)}
\Lambda_{\ul{\bA},\ul{Y}}(\ul{X})=I_{Lm}.
\end{align}
Thus, for every invertible
$S\in\KK^{sm\times sm}$ 
such that 
$S\cdot\ul{X}\cdot S^{-1}
\in\Omega_{sm}(\cR)$, 
i.e., such that
\begin{align}
\label{eq:11Sep17b}
\det 
\big(\Lambda_{\ul{\bA},\ul{Y}}
(S\cdot\ul{X}\cdot S^{-1})
\big)=
\det\Big(I_{Lm}-\sum_{k=1}^d 
(SX_kS^{-1}-I_m\otimes
 Y_k)\bA_k\Big)\ne0,
\end{align}
 one can apply 
(\ref{eq:3Mar19d}) 
 for 
 $S\cdot\ul{X}\cdot S^{-1}$ 
 to obtain that 
\begin{align}
\label{eq:11Sep17a}
\fo^{(L)}
\Delta_{\Delta_\cR}^{(\ell)}
(S\cdot\ul{X}\cdot S^{-1})
\fc^{(R)}
\Lambda_{\ul{\bA},\ul{Y}}
(S\cdot\ul{X}\cdot S^{-1})=I_{Lm}.
\end{align}
We proved that 
(\ref{eq:11Sep17a}) 
holds whenever
(\ref{eq:11Sep17b})
holds and next we show that
(\ref{eq:11Sep17a}) holds for any invertible 
$S\in\KK^{sm\times sm}$, even without assuming 
(\ref{eq:11Sep17b}).

As 
$\cR:\Omega(\cR)\rightarrow\KK_{nc}$ 
is a nc function, there exists a unique nc function
$\wt{\cR}:\wt{\Omega}(\cR)\rightarrow 
\KK_{nc}$ such that 
$\wt{\cR}\mid_{\Omega(\cR)}=\cR,$
where 
$\wt{\Omega}(\cR)$
is the similarity invariant envelope of 
$\Omega(\cR)$, 
that is the smallest nc set that contains 
$\Omega(\cR)$ 
and is similarity invariant
(cf. \cite[Appendix A]{KV1}). 
Let
$\varphi:\KK^{sm\times sm}_{inv}
\rightarrow\KK^{Lm\times Lm}$ be the mapping defined by
$$\varphi(S)=\fo^{(L)}
\Delta_{\Delta_{\wt{\cR}}}^{(\ell)}
(S\cdot\ul{X}\cdot S^{-1})
\fc^{(R)}
\Lambda_{\ul{\bA},\ul{Y}}
(S\cdot\ul{X}\cdot S^{-1})-I_{Lm}$$
for every $S\in\KK^{sm\times sm}_{inv}$, 
i.e., for every invertible matrix 
$S\in\KK^{sm\times sm}$,
where 
$\ul{X},I_m\otimes \ul{Y}\in\Omega_{sm}(\cR)$ 
imply that 
$S\cdot\ul{X}\cdot S^{-1},
I_m\otimes\ul{Y}\in\wt{\Omega}_{sm}(\cR)$ 
and hence
$\Delta_{\Delta_{\wt{\cR}}}^{(\ell)}
(S\cdot\ul{X}\cdot S^{-1})$
is well defined. 
If 
$S\in\KK^{sm\times sm}_{inv}$ such that  
(\ref{eq:11Sep17b})
holds, then 
$S\cdot\ul{X}\cdot S^{-1}\in 
\Omega_{sm}(\cR)$ 
which implies that 
$\Delta_{\Delta_{\wt{\cR}}}^{(\ell)}
(S\cdot\ul{X}\cdot S^{-1})
=\Delta_{\Delta_{\cR}}^{(\ell)}
(S\cdot\ul{X}\cdot S^{-1})$ and thus $\varphi(S)=0$.
As the set
$$\cZ=
\big\{ S\in 
\KK^{sm\times sm}: 
\det(S)\ne0,\,
\det\big(\Lambda_{\ul{\bA},\ul{Y}}
(S\cdot\ul{X}\cdot 
S^{-1})\big)\ne0\big\}$$
is a non-empty (since
$I_{sm}\in\cZ$) Zariski open subset of $\KK^{sm\times sm}_{inv}$
on which $\varphi\mid_{\cZ}=0$,
while on the other hand the equation $\varphi(S)=0$  is a polynomial expression
w.r.t (entries of)
$S$ 
and to 
$1/\det(S)$, we deduce that 
$\varphi(S)=0$ holds true for every $S\in\KK^{sm\times sm}_{inv}$.
Therefore the matrix 
$\Lambda_{\ul{\bA},\ul{Y}}
(S\cdot\ul{X}\cdot S^{-1})$
is invertible, which means that 
$S\cdot\ul{X}\cdot S^{-1}\in\Omega_{sm}(\cR)$.
\end{proof}

\section{Minimal Realizations \& NC Rational Functions: 
Full Characterization}
\label{sec:mainR}
\subsection{Linearized lost-abbey conditions w.r.t algebras}
We begin this section by carrying on some of the computations regarding the
$\cL-\cL\cA$ 
conditions, to evaluations over algebras.
Let 
$\cA$
be a unital
$\KK-$algebra.
As in 
\cite{PV1},
we extend the linear mappings 
$$\bA_k:\KK^{s\times s}
\rightarrow\KK^{L\times L}
\text{ and }\bB_k:\KK^{s\times s}
\rightarrow\KK^{L\times s},\,
k=1,\ldots,d$$ 
in a natural way to linear mappings
$$\bA_k^{\cA}:\cA^{\sts}
\rightarrow\cA^{L\times L}
\text{ and }\bB_k^{\cA}:\cA^{\sts}
\rightarrow\cA^{L\times s},\,
k=1,\ldots,d$$ 
by the following rule:
if $\fA\in\cA^{\sts}$, 
it can be written as 
$\fA=\sum_{p,q=1}^s 
E_{pq}\otimes \fa_{pq}$ where 
$\fa_{pq}\in\cA$ and
$E_{pq}\in\cE_s$,
thus we define 
$$(\fA)\bA_k^{\cA}:=
\sum_{p,q=1}^s 
\bA_k(E_{pq})\otimes \fa_{pq}
\in\cA^{L\times L}
\text{ and }
(\fA)\bB_k^{\cA}:=
\sum_{p,q=1}^s 
\bB_k(E_{pq})\otimes \fa_{pq}
\in\cA^{L\times s}$$
for $1\le k\le d$.
It is easy to check that if any of the 
$\cL-\cL\cA$ conditions (cf. equations 
$(\ref{eq:12Jun17a})-(\ref{eq:12Jun17d})$) 
holds over 
$\KK^{\sts}$, 
then it holds also over 
$\cA^{\sts}$.
More precisely, the  
linearized lost-abbey 
($\cL-\cL\cA$) 
conditions w.r.t 
$\cA$ are given by
\begin{align}
\label{eq:12Jun17a3}
S_{\cA}D_{\cA}-D_{\cA}
S_{\cA}=C_{\cA}
\sum_{k=1}^d 
\big([S,Y_k]_{\cA}\big)\bB_k^{\cA},
\end{align}
\begin{align}
\label{eq:12Jun17b3}
(SC)_{\cA}
(\fA_1)\bB_{i_1}^{\cA}-C_{\cA}
\big(S_{\cA}\fA_1\big)\bB_{i_1}^{\cA}
=C_{\cA}\Big(\sum_{k=1}^d
\big([S,Y_k]_{\cA}\big)\bA_k^{\cA}\Big)
(\fA_1)\bB_{i_1}^{\cA},
\end{align}
\begin{align}
\label{eq:12Jun17c3}
(SC)_{\cA}(\fA_1)
\bA_{i_1}^{\cA}-C_{\cA}
\big(S_{\cA}\fA_1\big)\bA_{i_1}^{\cA}
=C_{\cA}\Big(\sum_{k=1}^d
\big([S,Y_k]_{\cA}\big)
\bA_k^{\cA}\Big)(\fA_1)\bA_{i_1}^{\cA},
\end{align}
\begin{align}
\label{eq:12Jun17f3}
(\fA_1S_{\cA})\bB_{i_1}^{\cA}-(\fA_1)\bB_{i_1}^{\cA}S_{\cA}
=(\fA_1)\bA_{i_1}^{\cA}\sum_{k=1}^d
\big([S,Y_k]_{\cA}\big)\bB_k^{\cA},
\end{align}
\begin{align}
\label{eq:12Jun17e3}
(\fA_1S_{\cA})\bA_{i_1}^{\cA}(\fA_2)\bB_{i_2}^{\cA}
-(\fA_1)\bA_{i_1}^{\cA}
(S_{\cA}\fA_2)\bB_{i_2}^{\cA}
=(\fA_1)\bA_{i_1}^{\cA}
\Big( 
\sum_{k=1}^d
\big([S,Y_k]_{\cA}\big)\bA_k^{\cA}\Big)
(\fA_2)\bB_{i_2}^{\cA},
\end{align}
and
\begin{align}
\label{eq:12Jun17d3}
(\fA_1S_{\cA})\bA_{i_1}^{\cA}(\fA_2)\bA_{i_2}^{\cA}
-(\fA_1)\bA_{i_1}^{\cA}
(S_{\cA}\fA_2)\bA_{i_2}^{\cA}
=(\fA_1)\bA_{i_1}^{\cA}
\Big( 
\sum_{k=1}^d
\big([S,Y_k]_{\cA}\big)\bA_k^{\cA}
\Big)(\fA_2)\bA_{i_2}^{\cA},
\end{align}
for every 
$S\in\KK^{\sts},\fA_1,\fA_2\in\cA^{\sts}$,
and 
$1\le i_1,i_2\le d$,
where for the sake of simplicity, 
we frequently use the notation
$$Z_{\cA}:=Z\otimes 
{\bf1}_{\cA}\in
\cA^{\alpha\times\beta},$$
for every 
$\alpha,\beta\in\NN$ 
and 
$Z\in\KK^{\alpha\times\beta}$.
In the following lemma we prove that equation 
$(\ref{eq:12Jun17d})$
implies equation
$(\ref{eq:12Jun17d3})$,
while the proofs for the other 
equations are similar, hence omitted.

\begin{lemma}
Let 
$\cA$ 
be a unital
$\KK-$algebra.
If equation
$(\ref{eq:12Jun17d})$
holds, then
$(\ref{eq:12Jun17d3})$ holds.
\end{lemma}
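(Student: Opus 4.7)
The plan is a direct verification via expansion in the standard basis, reducing the identity over $\cA$ to a sum of identities over $\KK$. The only conceptual point is that the $\cA$--coefficients commute with everything scalar-valued, so they can be carried along as a passive tensor factor throughout the computation.

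First I would write $\fA_1=\sum_{p,q=1}^s E_{pq}\otimes\fa^{(1)}_{pq}$ and $\fA_2=\sum_{p,q=1}^s E_{pq}\otimes\fa^{(2)}_{pq}$ with $\fa^{(\nu)}_{pq}\in\cA$. Since $S\in\KK^{\sts}$ is scalar-valued and $S_\cA=S\otimes\mathbf{1}_\cA$, the products $\fA_1 S_\cA$ and $S_\cA\fA_2$ in $\cA^{\sts}$ are simply $\sum_{p,q}(E_{pq}S)\otimes\fa^{(1)}_{pq}$ and $\sum_{p,q}(SE_{pq})\otimes\fa^{(2)}_{pq}$, respectively. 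Applying the definition of $\bA_{i_1}^\cA$ and $\bA_{i_2}^\cA$ to each of these gives
\begin{align*}
(\fA_1 S_\cA)\bA_{i_1}^\cA &= \sum_{p,q}\bA_{i_1}(E_{pq}S)\otimes\fa^{(1)}_{pq}, &
(\fA_1)\bA_{i_1}^\cA &= \sum_{p,q}\bA_{i_1}(E_{pq})\otimes\fa^{(1)}_{pq},\\
(S_\cA\fA_2)\bA_{i_2}^\cA &= \sum_{p,q}\bA_{i_2}(SE_{pq})\otimes\fa^{(2)}_{pq}, &
(\fA_2)\bA_{i_2}^\cA &= \sum_{p,q}\bA_{i_2}(E_{pq})\otimes\fa^{(2)}_{pq}.
\end{align*}

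Next I would multiply out both sides of the desired equation (\ref{eq:12Jun17d3}) using the standard rule $(M_1\otimes\fa)(M_2\otimes\fb)=(M_1 M_2)\otimes(\fa\fb)$ in $\cA^{L\times L}$. For each summand indexed by $(p,q,p',q')$ the $\cA$--factor is $\fa^{(1)}_{pq}\fa^{(2)}_{p'q'}$, while the $\KK$--matrix factors on the left-hand side are
$\bA_{i_1}(E_{pq}S)\bA_{i_2}(E_{p'q'})-\bA_{i_1}(E_{pq})\bA_{i_2}(SE_{p'q'})$
and on the right-hand side
$\bA_{i_1}(E_{pq})\bigl(\sum_{k=1}^d\bA_k([S,Y_k])\bigr)\bA_{i_2}(E_{p'q'})$.
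Note that $([S,Y_k]_\cA)\bA_k^\cA=\bA_k([S,Y_k])\otimes\mathbf{1}_\cA$ precisely because $[S,Y_k]$ is scalar-valued, which is what turns the sum over $k$ into a scalar-matrix operator that can be inserted between the two $\cA$--matrix factors without generating new $\cA$--scalar contributions.

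At this stage the scalar identity (\ref{eq:12Jun17d}), applied with the specific choices $Z_1=E_{pq}$ and $Z_2=E_{p'q'}$, equates exactly the $\KK$--matrix coefficients of each $(p,q,p',q')$--summand. Tensoring each such equality with $\fa^{(1)}_{pq}\fa^{(2)}_{p'q'}$ and summing over the four indices yields (\ref{eq:12Jun17d3}).

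There is no real obstacle here; the only bookkeeping care needed is to verify that $S_\cA$ (and the commutator $[S,Y_k]_\cA$) behaves as a ``scalar'' in the tensor product, so that passing it across an $\cA$--coefficient $\fa^{(\nu)}_{pq}$ is legitimate, and that the extension rule (\ref{eq:9Apr20b}) commutes with multiplication by such scalar factors on the left and the right. The same argument template, applied with the appropriate choice of basis expansion on $\fA_1$ (and, where relevant, on $\fA_2$), yields the remaining implications $(\ref{eq:12Jun17a})\Rightarrow(\ref{eq:12Jun17a3})$ through $(\ref{eq:12Jun17e})\Rightarrow(\ref{eq:12Jun17e3})$, which is why the authors omit them.
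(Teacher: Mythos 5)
Your proposal is correct and follows essentially the same route as the paper's proof: expand $\fA_1,\fA_2$ in the basis $\cE_s$, use that $S_\cA$ and $[S,Y_k]_\cA$ are scalar tensor factors so that $([S,Y_k]_\cA)\bA_k^\cA=\bA_k([S,Y_k])\otimes\mathbf{1}_\cA$, apply the scalar condition (\ref{eq:12Jun17d}) with $Z_1=E_{pq}$, $Z_2=E_{p'q'}$ termwise, and reassemble the sum. No gaps to report.
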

\begin{proof}
If 
$\fA_1,\fA_2\in\cA^{\sts}$, 
one can write 
$$\fA_1=\sum_{p,q=1}^s 
E_{pq}\otimes \fa^{(1)}_{pq}
\text{ and }\fA_2=\sum_{k,l=1}^s 
E_{kl}\otimes \fa^{(2)}_{kl},$$
where
$E_{pq}\in\cE_s$
and 
$\fa^{(1)}_{pq},\fa^{(2)}_{pq}\in\cA$ 
for 
$1\le p,q\le s$, 
then
\begin{multline*}
\big(\fA_1S_{\cA}\big)
\bA_{i_1}^{\cA}(\fA_2)
\bA_{i_2}^{\cA}-(\fA_1)\bA_{i_1}^{\cA}
\big(S_{\cA}\fA_2\big)\bA_{i_2}^{\cA}
\\=\Big(\sum_{p,q=1}^s 
(E_{pq}S)\otimes \fa^{(1)}_{pq}\Big)
\bA_{i_1}^{\cA}\Big(\sum_{k,l=1}^s
E_{kl}\otimes \fa^{(2)}_{kl}\Big)\bA_{i_2}^{\cA}
\\ -
\Big(\sum_{p,q=1}^sE_{pq}
\otimes \fa^{(1)}_{pq}\Big)
\bA_{i_1}^{\cA}\Big(\sum_{k,l=1}^s
(SE_{kl})\otimes \fa^{(2)}_{kl}\Big)\bA_{i_2}^{\cA}
\\=\sum_{p,q=1}^s\sum_{k,l=1}^s\Big[
\left(\bA_{i_1}(E_{pq}S)\otimes \fa^{(1)}_{pq}
\right)\left(
\bA_{i_2}(E_{kl})\otimes \fa_{kl}^{(2)}
\right)
\\ -
\left(\bA_{i_1}(E_{pq})
\otimes \fa_{pq}^{(1)}\right)
\left(
\bA_{i_2}(SE_{kl})\otimes \fa_{kl}^{(2)}\right)\Big]
\\=\sum_{p,q=1}^s\sum_{k,l=1}^s\Big[
\Big(\bA_{i_1}(E_{pq}S)
\bA_{i_2}(E_{kl})-\bA_{i_1}(E_{pq})
\bA_{i_2}(SE_{kl})\Big)
\otimes
\left(\fa_{pq}^{(1)}
\fa_{kl}^{(2)}\right)\Big]
\end{multline*}
Applying 
$(\ref{eq:12Jun17d})$, 
we get
\begin{multline*}
\big(\fA_1S_{\cA}\big)
\bA_{i_1}^{\cA}(\fA_2)
\bA_{i_2}^{\cA}-(\fA_1)\bA_{i_1}^{\cA}
\big(S_{\cA}\fA_2\big)\bA_{i_2}^{\cA}
\\=\sum_{p,q=1}^s\sum_{k,l=1}^s
\Big[
\Big(\bA_{i_1}(E_{pq})
\Big(\sum_{k=1}^d
\bA_k([S,Y_k]) 
\Big)
\bA_{i_2}(E_{kl})\Big)
\otimes \Big(
\fa^{(1)}_{pq} \fa^{(2)}_{kl}\Big)\Big]
\\=\Big(\sum_{p,q=1}^s 
\bA_{i_1}(E_{pq})\otimes \fa^{(1)}_{pq}\Big)
\Big(\sum_{k=1}^d
\bA_k([S,Y_k])
\otimes {\bf1}_{\cA}\Big)
\sum_{k,l=1}^s \bA_{i_2}(E_{k,l})
\otimes \fa^{(2)}_{kl}
\\=(\fA_1)\bA_{i_1}^{\cA}\Big(
\sum_{k=1}^d
\big([S,Y_k]_{\cA}
\big)\bA_k^{\cA}
\Big)
(\fA_2)\bA_{i_2}^{\cA}.
\end{multline*}
\end{proof} 
For a nc Fornasini--Marchesini realization
$\cR$
that is centred at
$\ul{Y}\in \Omega_s(\cR)$
and described by
$(L;D,C,\ul{\bA},\ul{\bB})$, 
we recall the following definitions: 
\begin{enumerate}
\item[$\bullet$]
The 
\textbf{$\cA-$domain} of $\cR$ 
(denoted by $DOM^{\cA}(\cR)$ in 
\cite[p.9]{PV1}), that is
\begin{equation*}
\Omega_{\cA}(\cR):=\Big\{
\ul{\fA}\in(\cA^{\sts})^d:
\Lambda_{\ul{\bA},\ul{Y}}^{\cA}(\ul{\fA})
\text{ is invertible in }
\cA^{L\times L}\Big\},
\end{equation*}
where 
$\cY_k:=Y_k\otimes {\bf1}_{\cA}$ 
for every
$1\le k\le d$, 
and
$$\Lambda_{\ul{\bA},\ul{Y}}^{\cA}(\ul{\fA}):=
(I_L)_{\cA}-\sum_{k=1}^d
(\fA_k-\cY_k)
\bA_k^{\cA}=(I_L)_\cA-\sum_{k=1}^d 
\big[(\fA_k)\bA_k^{\cA}-
(\bA_k(Y_k))_\cA\big].$$
\item[$\bullet$] 
The $\cA-$evaluation of 
$\cR$ 
at any
$\ul{\fA}=(\fA_1,\ldots,\fA_d)\in 
\Omega_{\cA}(\cR)$, 
which is
\begin{align*}
\cR^{\cA}(\ul{\fA}):=
D_\cA
+C_\cA
\Lambda_{\ul{\bA},\ul{Y}}^{\cA}(\ul{\fA})^{-1}
\sum_{k=1}^d 
\big[(\fA_k)\bB_k^{\cA}-
(\bB_k(Y_k))_\cA\big].
\end{align*}
\end{enumerate}

In the next proposition we show that 
in the special case where 
$\ul{\fA}=(\fA_1,\ldots,\fA_d)=
(I_s\otimes \fa_1,\ldots, 
I_s\otimes \fa_d)\in\Omega_{\cA}(\cR)$ 
for some 
$\fa_1,\ldots,\fa_d\in\cA$, 
the matrix 
$\cR^{\cA}(\ul{\fA})$ 
commutes with all the matrices of the form
$S_{\cA}=S\otimes {\bf1}_{\cA}$, 
where 
$S\in\KK^{\sts}$, 
and hence must be scalar.
This result is very important, as we will see in the proof of Theorem 
\ref{thm:26Jun19a}.
\begin{proposition}
\label{prop:3Jul17a}
Let $\cR$
be a nc Fornasini--Marchesini realization centred at 
$\ul{Y}\in\KK^{\sts}$ 
that is described by 
$(L;D,C,\ul{\bA},\ul{\bB})$,
suppose its coefficients satisfy the 
$\cL-\cL\cA$ conditions, and
let
$\cA$  
be a unital
$\KK-$algebra.
If 
$\ul{\fa}=(\fa_1,\ldots,\fa_d)\in\cA^d$  
such that 
\begin{align}
\label{eq:17Aug18b}
\Lambda_{\ul{\bA},\ul{Y}}^{\cA}(I_s\otimes\ul{\fa})=(I_L)_{\cA}-\sum_{k=1}^d
\big[\bA_k(I_s)\otimes\fa_k-(\bA_k(Y_k))_{\cA}\big]
\end{align}
is invertible in $\cA^{L\times L}$,
then 
$\cR^{\cA}(\ul{\fA})S_{\cA}=
S_{\cA}\cR^{\cA}(\ul{\fA})$
for every
$S\in\KK^{\sts}$, where
$\ul{\fA}=(\fA_1,\ldots,\fA_d)=I_s\otimes \ul{\fa}$.
In particular, there exists an element $f(\ul{\fa})\in\cA$ such that 
$\cR^{\cA}(I_s\otimes\ul{\fa})=I_s\otimes f(\ul{\fa})$. 
\end{proposition}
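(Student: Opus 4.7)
The plan is to mimic the intertwining portion of the proof of Theorem \ref{thm:19Oct17a} in the $\cA$-setting. The crucial structural observation is that for each $1\le k\le d$, the matrix $\fA_k=I_s\otimes\fa_k$ commutes with $S_\cA=S\otimes\mathbf{1}_\cA$, since the $\KK$-factor $S$ acts trivially on the $I_s$ slot of $\fA_k$ and the $\cA$-factors $\fa_k$ and $\mathbf{1}_\cA$ commute. Hence $S_\cA$ serves as a ``self-intertwiner'' of $\ul{\fA}$, playing the role of the scalar intertwiner $T$ satisfying $\ul{X}\cdot T=T\cdot\ul{\wt X}$ that appears in the proof of Theorem \ref{thm:19Oct17a}.

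First I would invoke the $\cA$-versions of the $\cL-\cL\cA$ conditions, equations $(\ref{eq:12Jun17a3})$--$(\ref{eq:12Jun17d3})$, which follow from their $\KK$-versions by the lemma immediately preceding the proposition (carried out explicitly there for $(\ref{eq:12Jun17d3})$; the other cases are entirely analogous). Then I would reproduce the three-step calculation from Theorem \ref{thm:19Oct17a}, now with $T$ replaced by $S_\cA$, with $I_m\otimes Y_k$ replaced by $\cY_k=Y_k\otimes\mathbf{1}_\cA$, and with the defect $\sum_k \big(T(I_{\wt m}\otimes Y_k)-(I_m\otimes Y_k)T\big)\bA_k$ becoming $\sum_k\big([S,Y_k]_\cA\big)\bA_k^\cA$, which is precisely what appears on the right-hand sides of $(\ref{eq:12Jun17a3})$--$(\ref{eq:12Jun17d3})$. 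Step~1 uses $(\ref{eq:12Jun17f3})$ and $(\ref{eq:12Jun17a3})$ to move $S_\cA$ past $\sum_k(\fA_k-\cY_k)\bB_k^\cA$ and absorb the $D_\cA$-term. Step~2 uses $(\ref{eq:12Jun17e3})$ and $(\ref{eq:12Jun17b3})$ to push $S_\cA$ past one factor of $\Lambda_{\ul{\bA},\ul{Y}}^\cA(\ul{\fA})^{-1}$. Step~3 uses $(\ref{eq:12Jun17d3})$ and $(\ref{eq:12Jun17c3})$ to absorb the remaining middle defect. The outcome is exactly the identity $S_\cA\cR^\cA(\ul{\fA})=\cR^\cA(\ul{\fA})S_\cA$.

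For the final assertion I would use a standard commutant argument. Writing $\cR^\cA(\ul{\fA})=\sum_{p,q=1}^s E_{pq}\otimes\fr_{pq}$ with $\fr_{pq}\in\cA$ and imposing commutation with every $E_{ij}\otimes\mathbf{1}_\cA$ (which span $\KK^{\sts}\otimes\mathbf{1}_\cA$) gives $\sum_{q} E_{iq}\otimes\fr_{jq}=\sum_{p} E_{pj}\otimes\fr_{pi}$; matching entries forces $\fr_{pq}=\delta_{pq}f(\ul{\fa})$ for a single $f(\ul{\fa})\in\cA$ independent of $p,q$. Equivalently, the centralizer of $\KK^{\sts}\otimes\mathbf{1}_\cA$ inside $\cA^{\sts}\cong\KK^{\sts}\otimes\cA$ is $I_s\otimes\cA$, so $\cR^\cA(I_s\otimes\ul{\fa})=I_s\otimes f(\ul{\fa})$ as claimed.

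The main obstacle is the bookkeeping in Step~2 of the commutator computation: each algebraic manipulation that worked over $\KK^{\sts}$ in Theorem \ref{thm:19Oct17a} must be re-verified over $\cA^{\sts}$. The re-verification is mechanical, because all operations are bilinear and associative, $S_\cA$ touches only the $\KK$-factor, and the decorations $(\cdot)_\cA=(\cdot)\otimes\mathbf{1}_\cA$ lie in the centre of $\cA^{\sts}$ relative to the $\cA$-slot; no genuinely new phenomenon beyond the scalar proof appears.
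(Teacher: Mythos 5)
Your proposal is correct and follows essentially the same route as the paper's own proof: the paper likewise pushes $S_{\cA}$ through $\cR^{\cA}(\ul{\fA})$ using the $\cA$-versions of the $\cL-\cL\cA$ conditions in the same three stages (via $(\ref{eq:12Jun17a3})$, $(\ref{eq:12Jun17f3})$, then $(\ref{eq:12Jun17e3})$, $(\ref{eq:12Jun17b3})$, then $(\ref{eq:12Jun17d3})$, $(\ref{eq:12Jun17c3})$), using exactly the commutation $\fA_k S_{\cA}=S_{\cA}\fA_k$ for $\fA_k=I_s\otimes\fa_k$ that you identify, and it concludes scalarity by taking $S=E_{\alpha\beta}$, which is your commutant argument.
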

The ideas of the proof  are very similar to those in the proof of Theorem
\ref{thm:19Oct17a},
with the difference of using the 
$\cL-\cL\cA$ conditions w.r.t algebras now, instead
of the 
$\cL-\cL\cA$ conditions. 
Nevertheless, we present the proof fully detailed.
\begin{proof}
Let 
$S\in\KK^{\sts}$. 
For simplicity, recall the notations
$\cY_i:=Y_i\otimes {\bf1}_{\cA}$
for every
$1\le i\le d$ and  define the matrices
\begin{align*}
\fM_{A}=\sum_{k=1}^d 
\big([S,Y_k]_{\cA}\big)
\bA_k^{\cA}\in\cA^{L\times L}
\text{ and }
\fM_{B}=\sum_{k=1}^d 
\big([S,Y_k]_{\cA}\big)
\bB_k^{\cA}\in\cA^{L\times s}.
\end{align*}
From equations
$(\ref{eq:12Jun17a3})$ 
and 
$(\ref{eq:12Jun17f3})$, 
it follows that 
\begin{multline*}
\cR^{\cA}(\ul{\fA})S_{\cA}
=(DS)_{\cA}
+C_{\cA}
\Lambda_{\ul{\bA},\ul{Y}}^{\cA}(\ul{\fA})^{-1}
\Big(\sum_{i=1}^d
(\fA_i-\cY_i)
\bB_i^{\cA}\Big)
S_{\cA}
\\=(SD)_{\cA}
-C_{\cA}\fM_B
+C_{\cA}
\Lambda_{\ul{\bA},\ul{Y}}^{\cA}(\ul{\fA})^{-1}
\Big[\sum_{i=1}^d 
\big((\fA_i-\cY_i)
S_{\cA}\big)\bB_i^{\cA}
-\sum_{i=1}^d
(\fA_i-\cY_i)\bA_i^{\cA}
\fM_B\Big]
\\=\Big(SD-C
\sum_{k=1}^d 
\bB_k([S,Y_k])
\Big)_{\cA}
+C_{\cA}
\Lambda_{\ul{\bA},\ul{Y}}^{\cA}(\ul{\fA})^{-1}
\\\Big[\sum_{i=1}^d 
\big((\fA_i-\cY_i)
S_{\cA}\big)\bB_i^{\cA}
+\Lambda_{\ul{\bA},\ul{Y}}^{\cA}(\ul{\fA})
\Big(\sum_{k=1}^d 
\bB_k([S,Y_k])\Big)_{\cA}
-\Big(\sum_{k=1}^d 
\bB_k([S,Y_k])\Big)_{\cA}\Big]
\\=(SD)_{\cA}+
C_{\cA}
\Lambda_{\ul{\bA},\ul{Y}}^{\cA}(\ul{\fA})^{-1}
\sum_{i=1}^d 
\big((\fA_i-\cY_i)
S_{\cA}
-[S,Y_i]_{\cA}\big)\bB_i^{\cA}
\\=(SD)_{\cA}
+C_{\cA}
\Lambda_{\ul{\bA},\ul{Y}}^{\cA}(\ul{\fA})^{-1}
\sum_{i=1}^d 
\big(\fA_iS_{\cA}
-(SY_i)_{\cA}\big)\bB_i^{\cA}
\\ =(SD)_{\cA}
+C_{\cA}
\Lambda_{\ul{\bA},\ul{Y}}^{\cA}(\ul{\fA})^{-1}
\sum_{i=1}^d 
\big(S_{\cA}
(\fA_i-\cY_i)\big)
\bB_i^{\cA},
\end{multline*}
i.e.,
\begin{align*}
\cR^{\cA}(\ul{\fA})S_{\cA}
-(SD)_{\cA}
=C_{\cA}
\Lambda_{\ul{\bA},\ul{Y}}^{\cA}(\ul{\fA})^{-1}
\sum_{i=1}^d 
\big(S_{\cA}
(\fA_i-\cY_i)\big)
\bB_i^{\cA}.
\end{align*}
From 
$(\ref{eq:12Jun17e3})$ 
it follows that
\begin{multline*}
\big((\fA_j-\cY_j)
S_{\cA}\big)
\bA_j^{\cA}(\fA_i-\cY_i)
\bB_i^{\cA}-(\fA_j-\cY_j)
\bA_j^{\cA}
\big(S_{\cA}
(\fA_i-\cY_i)\big)\bB_i^{\cA}
\\ =(\fA_j-\cY_j)
\bA_j^{\cA}\fM_A
(\fA_i-\cY_i)\bB_i^{\cA},
\end{multline*}
thus
\begin{multline*}
\Big(\sum_{j=1}^d
\big((\fA_j-\cY_j)S_{\cA}\big)\bA_j^{\cA}\Big)
\sum_{i=1}^d(\fA_i-\cY_i)\bB_i^{\cA}-
\Big(\sum_{j=1}^d
(\fA_j-\cY_j)\bA_j^{\cA}\Big)
\sum_{i=1}^d
\big(S_{\cA}
(\fA_i-\cY_i)\big)\bB_i^{\cA}
\\=\Big(\sum_{j=1}^d(\fA_j-\cY_j)\bA_j^{\cA}\Big)
\fM_A\sum_{i=1}^d(\fA_i-\cY_i)\bB_i^{\cA}
\end{multline*}
and hence
\begin{multline*}
\Lambda_{\ul{\bA},\ul{Y}}^{\cA}(\ul{\fA})
\sum_{i=1}^d
\big(S_{\cA}(\fA_i-\cY_i)\big)\bB_i^{\cA}
-\sum_{i=1}^d
\big(S_{\cA}(\fA_i-\cY_i)\big)\bB_i^{\cA}
\\=
\Big(\sum_{j=1}^d(\fA_j-\cY_j)\bA_j^{\cA}\Big)
\fM_A
\sum_{i=1}^d(\fA_i-\cY_i)\bB_i^{\cA}
-\Big(\sum_{j=1}^d
\big((\fA_j-\cY_j)S_{\cA}\big)\bA_j^{\cA}\Big)
\sum_{i=1}^d(\fA_i-\cY_i)\bB_i^{\cA},
\end{multline*}
which implies that 
\begin{multline*}
\Lambda_{\ul{\bA},\ul{Y}}^{\cA}
(\ul{\fA})^{-1}
\sum_{i=1}^d
\big(S_{\cA}(\fA_i-\cY_i)
\big)\bB_i^{\cA}
\\=\sum_{i=1}^d
\big(S_{\cA}(\fA_i-\cY_i)
\big)\bB_i^{\cA}
-\Lambda_{\ul{\bA},\ul{Y}}^{\cA}
(\ul{\fA})^{-1}
\Big(\sum_{j=1}^d
(\fA_j-\cY_j)\bA_j^{\cA}\Big)
\fM_A\sum_{i=1}^d
(\fA_i-\cY_i)\bB_i^{\cA}
\\+\Lambda_{\ul{\bA},\ul{Y}}^{\cA}
(\ul{\fA})^{-1}
\Big(\sum_{j=1}^d
\big((\fA_j-\cY_j)
S_{\cA}\big)\bA_j^{\cA}\Big)
\sum_{i=1}^d(\fA_i-\cY_i)\bB_i^{\cA}
\\=\sum_{i=1}^d
\big(S_{\cA}(\fA_i-\cY_i)\big)
\bB_i^{\cA}+
\Lambda_{\ul{\bA},\ul{Y}}^{\cA}
(\ul{\fA})^{-1}
\Big(\sum_{j=1}^d
\big((\fA_j-\cY_j)
S_{\cA}\big)\bA_j^{\cA}\Big)
\sum_{i=1}^d
(\fA_i-\cY_i)\bB_i^{\cA}
\\+\fM_A\sum_{i=1}^d
(\fA_i-\cY_i)\bB_i^{\cA}
-\Lambda_{\ul{\bA},\ul{Y}}^{\cA}
(\ul{\fA})^{-1}
\fM_A
\sum_{i=1}^d
(\fA_i-\cY_i)
\bB_i^{\cA}
\\=\sum_{i=1}^d
\big(S_{\cA}(\fA_i-\cY_i)
\big)\bB_i^{\cA}
+\fM_A\sum_{i=1}^d
(\fA_i-\cY_i)\bB_i^{\cA}
\\+\Lambda_{\ul{\bA},\ul{Y}}^{\cA}
(\ul{\fA})^{-1}
\Big(\sum_{j=1}^d
\big((\fA_j-\cY_j)
S_{\cA}
-[S,Y_j]_{\cA}\big)
\bA_j^{\cA}\Big)
\sum_{i=1}^d(\fA_i-\cY_i)\bB_i^{\cA}.
\end{multline*}
Therefore, using 
$(\ref{eq:12Jun17b3})$,
we get
\begin{multline*}
\cR^{\cA}(\ul{\fA})
S_{\cA}-(SD)_{\cA}
\\=
\sum_{i=1}^d C_{\cA}
\Big[\big(S_{\cA}
(\fA_i-\cY_i)\big)\bB_i^{\cA}
+\fM_A(\fA_i-\cY_i)
\bB_i^{\cA}\Big]
\\+C_{\cA}
\Lambda_{\ul{\bA},\ul{Y}}^{\cA}
(\ul{\fA})^{-1}
\Big(\sum_{j=1}^d
\big(\fA_j
S_{\cA}-(SY_j)_{\cA}\big)\bA_j^{\cA}\Big)
\sum_{i=1}^d
(\fA_i-\cY_i)\bB_i^{\cA}
\\=\sum_{i=1}^d 
(SC)_{\cA}(\fA_i-\cY_i)
\bB_i^{\cA}+
C_{\cA}
\Lambda_{\ul{\bA},\ul{Y}}^{\cA}
(\ul{\fA})^{-1}
\Big(\sum_{j=1}^d
\big(S_{\cA}
(\fA_j-\cY_j)\big)\bA_j^{\cA}\Big)
\sum_{i=1}^d(\fA_i-\cY_i)\bB_i^{\cA}.
\end{multline*}
We know, from 
$(\ref{eq:12Jun17d3})$, 
that
\begin{multline*}
\Big(\sum_{i=1}^d 
(\fA_i-\cY_i)\bA_i^{\cA}\Big)
\sum_{j=1}^d
\big(S_{\cA}
(\fA_j-\cY_j)\big)
\bA_j^{\cA}
\\=\Big(\sum_{i=1}^d
\big((\fA_i-\cY_i)S_{\cA}\big)
\bA_i^{\cA}\Big)
\sum_{j=1}^d(\fA_j-\cY_j)\bA_j^{\cA}
-\Big(\sum_{i=1}^d(\fA_i-\cY_i)\bA_i^{\cA}\Big)
\fM_A
\sum_{j=1}^d(\fA_j-
\cY_j)\bA_j^{\cA}
\end{multline*}
and hence, using 
$(\ref{eq:12Jun17c3})$, 
\begin{multline*}
C_{\cA}
\Lambda_{\ul{\bA},\ul{Y}}^{\cA}
(\ul{\fA})^{-1}
\sum_{j=1}^d
\big(S_{\cA}(\fA_j-\cY_j)\big)\bA_j^{\cA}
\\=C_{\cA}
\Big(\sum_{j=1}^d
\big(S_{\cA}
(\fA_j-\cY_j)\big)\bA_j^{\cA}\Big)
+C_{\cA}\fM_A
\sum_{j=1}^d
(\fA_j-\cY_j)\bA_j^{\cA}
\\+C_{\cA}
\Lambda_{\ul{\bA},\ul{Y}}^{\cA}
(\ul{\fA})^{-1}
\Big(\sum_{i=1}^d\big(
(\fA_i-\cY_i)S_{\cA}\big)
\bA_i^{\cA}\Big)
\sum_{j=1}^d
(\fA_j-\cY_j)\bA_j^{\cA}
\\ -C_{\cA}
\Lambda_{\ul{\bA},\ul{Y}}^{\cA}
(\ul{\fA})^{-1}
\fM_A
\sum_{j=1}^d
(\fA_j-\cY_j)\bA_j^{\cA}
\\=(SC)_{\cA}\Big(
\sum_{i=1}^d 
(\fA_i-\cY_i)\bA_i^{\cA}\Big)
+C_{\cA}
\Lambda_{\ul{\bA},\ul{Y}}^{\cA}
(\ul{\fA})^{-1}
\Big(\sum_{i=1}^d
\big(S_{\cA}
(\fA_i-\cY_i)\big)\bA_i^{\cA}\Big)
\sum_{j=1}^d(\fA_j-\cY_j)\bA_j^{\cA},
\end{multline*}
which implies
\begin{align*}
C_{\cA}
\Lambda_{\ul{\bA},\ul{Y}}^{\cA}
(\ul{\fA})^{-1}
\Big(\sum_{j=1}^d
\big(S_{\cA}
(\fA_j-\cY_j)\big)\bA_j^{\cA}\Big) 
\Lambda_{\ul{\bA},\ul{Y}}^{\cA}
(\ul{\fA})=
(SC)_{\cA}
\sum_{i=1}^d 
(\fA_i-\cY_i)\bA_i^{\cA}
\end{align*}
and then
\begin{multline*}
C_{\cA}
\Lambda_{\ul{\bA},\ul{Y}}^{\cA}
(\ul{\fA})^{-1}
\sum_{j=1}^d
\big(S_{\cA}
(\fA_j-\cY_j)\big)\bA_j^{\cA}
=(SC)_{\cA}
\Big(
\sum_{i=1}^d 
(\fA_i-\cY_i)\bA_i^{\cA}\Big)
\Lambda_{\ul{\bA},\ul{Y}}^{\cA}
(\ul{\fA})^{-1}
\\=(SC)_{\cA}
\Lambda_{\ul{\bA},\ul{Y}}^{\cA}
(\ul{\fA})^{-1}
\sum_{i=1}^d
(\fA_i-\cY_i)\bA_i^{\cA}.
\end{multline*}
Finally, we see that
\begin{multline*}
\cR^{\cA}(\ul{\fA})S_{\cA}-(SD)_{\cA}
=\sum_{i=1}^d 
(SC)_{\cA}
(\fA_i-\cY_i)\bB_i^{\cA}
\\ +(SC)_{\cA}
\Lambda_{\ul{\bA},\ul{Y}}^{\cA}
(\ul{\fA})^{-1}
\Big(\sum_{i=1}^d
(\fA_i-\cY_i)\bA_i^{\cA}\Big)
\sum_{i=1}^d
(\fA_i-\cY_i)\bB_i^{\cA}
\\=(SC)_{\cA}
\Lambda_{\ul{\bA},\ul{Y}}^{\cA}
(\ul{\fA})^{-1}
\sum_{i=1}^d
(\fA_i-\cY_i)\bB_i^{\cA},
\end{multline*}
i.e.,  
$\cR^{\cA}(\ul{\fA})
S_{\cA}=
S_{\cA}\cR^{\cA}(\ul{\fA}).$ 
Then it is easily seen
--- just by choosing 
$S=E_{\alpha\beta}$ 
for all 
$1\le \alpha,\beta\le s$ ---  
that $\cR^{\cA}(\ul{\fA})$ 
is a scalar matrix over 
$\cA$, 
so for every 
$\ul{\fa}\in\cA^d$ such that
$I_s\otimes\ul{\fa}\in\Omega_{\cA}(\cR)$, 
there exists 
$f(\ul{\fa})\in\cA$ 
satisfying 
$\cR^{\cA}(I_s\otimes\ul{\fa})=I_s\otimes f(\ul{\fa})$.
\end{proof}

\subsection{Conclusions and main results}
Towards the final step of viewing a nc Fornasini--Marchesini realization
$\cR$, that is controllable and observable,
as a (restriction of a) nc rational function, we use the following
technical yet important lemma, see
\cite[Lemma 3.9]{V1}.
\begin{lemma}
\label{lem:25Jun19a}
Let 
$\fr=
\big(\fr_{ij}\big)_{1\le i,j\le n}$ 
be an 
$\ntn$ matrix over 
$\KK\plangle\ul{x}\prangle$ 
and let 
$\ul{X}\in dom(\fr)
:=\bigcap _{i,j=1,\ldots,n}
dom(\fr_{ij}).$ If 
$\det
\big(\fr(\ul{X})\big)\ne0$, 
then there exist nc rational expressions 
$S_{ij},
(i,j=1,\ldots,n)$ 
such that 
$\ul{X}\in \bigcap_{i,j=1,\ldots,n}
dom(S_{ij})$ 
and 
$S_{ij}$ 
represents 
$(\fr^{-1})_{ij}$ for every
$1\le i,j\le n$. 
\end{lemma}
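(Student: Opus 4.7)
The plan is to prove the lemma by induction on $n$, using the Schur-complement block inversion formula to reduce inverting an $n\times n$ matrix of nc rational expressions to inverting an $(n-1)\times(n-1)$ matrix plus a single scalar inversion, while carefully tracking that every representing expression produced remains defined at $\ul{X}$. The base case $n=1$ is immediate: choose any representing expression $R$ for $\fr_{11}$ with $\ul{X}\in dom(R)$; since $R(\ul{X})=\fr_{11}(\ul{X})$ is invertible, the expression $S_{11}:=R^{-1}$ satisfies $\ul{X}\in dom(S_{11})$ and represents $(\fr^{-1})_{11}$.

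For the inductive step, the first task is a pivoting reduction. Writing $\ul{X}\in(\KK^{p\times p})^d$, since $\fr(\ul{X})\in\KK^{np\times np}$ is invertible, a short linear-algebra argument produces invertible scalar matrices $E,F\in GL_n(\KK)$ such that the top-left $(n-1)\times(n-1)$ block of $E\fr F$ has invertible evaluation at $\ul{X}$. Replacing $\fr$ by $E\fr F$ (whose entries are $\KK$-linear combinations of the original $\fr_{ij}$ and hence still admit representing expressions defined at $\ul{X}$) and writing
\[
\fr=\begin{pmatrix}A&B\\C&D\end{pmatrix}
\]
with $A$ of size $(n-1)\times(n-1)$ invertible at $\ul{X}$ and $D$ of size $1\times 1$, the inductive hypothesis applied to $A$ yields expressions $T_{ij}$ for $(A^{-1})_{ij}$ defined at $\ul{X}$. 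Picking representing expressions $R_B,R_C,R_D$ for $B,C,D$ defined at $\ul{X}$ and setting $T=(T_{ij})$, the Schur-complement expression $\Sigma:=R_D-R_C\,T\,R_B$ represents the single nc rational function $D-CA^{-1}B$. The block determinant identity $\det(\fr(\ul{X}))=\det(A(\ul{X}))\cdot\det(\Sigma(\ul{X}))$ forces $\Sigma(\ul{X})\ne 0$, so $\Sigma^{-1}$ is defined at $\ul{X}$, and the block inversion formula
\[
\fr^{-1}=\begin{pmatrix}A^{-1}+A^{-1}B\Sigma^{-1}CA^{-1}&-A^{-1}B\Sigma^{-1}\\-\Sigma^{-1}CA^{-1}&\Sigma^{-1}\end{pmatrix}
\]
produces a representing expression for every entry of $\fr^{-1}$; undoing the $E,F$ change of basis gives the desired $S_{ij}$.

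The main obstacle is the pivoting step at the start of the induction. When $\ul{X}$ is a matrix tuple (rather than a tuple of scalars), it can happen that \emph{every} individual entry $\fr_{ij}(\ul{X})\in\KK^{p\times p}$ is singular while $\fr(\ul{X})\in\KK^{np\times np}$ remains invertible, so one cannot simply permute rows and columns to place an invertible pivot in the corner. Instead, one must operate at the block level using scalar matrices $E,F\in GL_n(\KK)$, and verify the auxiliary linear-algebra statement that any invertible $M\in\KK^{np\times np}$ admits such $E,F$ with $(E\otimes I_p)M(F\otimes I_p)$ having an invertible $(n-1)p\times(n-1)p$ leading principal block. In the scalar case $p=1$ of \cite{V1} this collapses to ordinary partial pivoting on nonzero entries, and no additional argument is needed.
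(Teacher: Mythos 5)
You have correctly located where all the difficulty sits, but the auxiliary pivoting statement you rely on is false, so the induction cannot even start. Take $n=2$ and the tuple of $2\times 2$ matrices $\ul{X}=(X_1,X_2,X_3,X_4)=(E_{11},E_{21},E_{12},E_{22})$ with $\fr=\begin{pmatrix} x_1&x_2\\ x_3&x_4\end{pmatrix}$ (polynomial entries, so $\ul{X}\in dom(\fr)$ trivially). Then $\fr(\ul{X})$ is the $4\times 4$ flip permutation matrix, which is invertible. However, for any $E=(e_{ij}),F=(f_{ij})\in GL_2(\KK)$ the $(1,1)$ entry of $E\fr F$ evaluates to
\begin{align*}
\big(E\fr F\big)_{11}(\ul{X})=\sum_{k,l}e_{1k}f_{l1}\,\fr_{kl}(\ul{X})
=\begin{pmatrix} e_{11}f_{11}&e_{12}f_{11}\\ e_{11}f_{21}&e_{12}f_{21}\end{pmatrix}
=\begin{pmatrix} f_{11}\\ f_{21}\end{pmatrix}\begin{pmatrix} e_{11}&e_{12}\end{pmatrix},
\end{align*}
a matrix of rank at most one, hence never invertible in $\KK^{2\times 2}$. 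Since for $n=2$ the leading $(n-1)\times(n-1)$ block is exactly this corner entry, no choice of scalar matrices $E,F$ produces an invertible pivot block at this point, and your inductive step collapses at its first nontrivial instance. The conceptual point is that block-scalar row and column operations can only create pivots that are bilinear combinations $\sum_{k,l}u_kv_l\,\fr_{kl}(\ul{X})$ of the evaluated entries, and (as the flip shows) an invertible block matrix can have all such combinations singular; so the reduction cannot be repaired by a cleverer choice of $E,F$, nor by permutations alone.

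For comparison: the paper does not prove this lemma at all; it is imported verbatim from \cite[Lemma 3.9]{V1}, and the phenomenon exhibited above is precisely what makes that lemma nontrivial when $\ul{X}$ is a genuine matrix point. In particular, the correct argument cannot consist of Schur complementation with respect to a scalar-conjugated corner block; the representing expressions $S_{ij}$ must be allowed to depend on $\ul{X}$ in an essentially more flexible way (note that in the example above the conclusion of the lemma is still true --- suitable expressions for the entries of $\begin{pmatrix} x_1&x_2\\ x_3&x_4\end{pmatrix}^{-1}$ defined at the flip point do exist --- they just are not produced by your scheme). So the base case and the Schur-complement bookkeeping in your proposal are fine, but the pivoting step needs a genuinely different idea, and as written the proof has a fatal gap.
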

Given a nc Fornasini--Marchesini realization $\cR$ centred at 
$\ul{Y}=(Y_1,\ldots,Y_d)\in(\KK^{\sts})^d$,
define the following two matrices of nc rational functions:
\begin{align}
\label{eq:26Jun19b}
\delta_{\cR}
(x_1,\ldots,x_d):=
I_L-\sum_{k=1}^d
\big[\bA_k(I_s)x_k-\bA_k(Y_k)\big]
\end{align}
in
$\KK\plangle\ul{x}\prangle^{\LTL}$,
and
\begin{multline}
\label{eq:22May20a}
\fr_{\cR}
(x_1,\ldots,x_d)=D \\ +C
\Big(I_L-\sum_{k=1}^d 
\big[\bA_k(I_s)x_k-\bA_k(Y_k)\big]\Big)^{-1}
\sum_{k=1}^d 
\big[
\bB_k(I_s)x_k-\bB_k(Y_k)\big]
\end{multline} 
in
$\KK\plangle\ul{x}\prangle^{\sts}$. 
We will show in the proof of Theorem 
\ref{thm:26Jun19a}
below that the matrix 
$\delta_{\cR}$, appearing in the formula of $\fr_{\cR}$, 
is indeed invertible over 
$\KK\plangle\ul{x}\prangle^{\LTL}$. 
\begin{theorem}
\label{thm:26Jun19a}
Let $\cR$
be a nc Fornasini--Marchesini realization
that is centred at 
$\ul{Y}\in\KK^{\sts}$ 
and described by  
$(L;D,C,\ul{\bA},\ul{\bB})$.
Suppose
$\cR$
is controllable and observable, 
and its coefficients satisfy the 
$\cL-\cL\cA$ conditions.
Then there exists 
$f\in\KK\plangle\ul{x}\prangle$ 
such that
\begin{align}
\label{eq:2Jul19a}
\fr_{\cR}=I_s\otimes f.
\end{align}
Moreover, 
$\Omega_{sm}(\cR)\subseteq dom_{sm}(f)$
and
$\cR(\ul{X})=f(\ul{X})$ 
for every
$m\in\NN$ and 
$\ul{X}\in\Omega_{sm}(\cR)$.
In particular, 
$\ul{Y}\in dom_s(f)$. 
\end{theorem}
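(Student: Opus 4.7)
The plan is to pass through an application of Proposition \ref{prop:3Jul17a} with $\cA=\KK\plangle\ul{x}\prangle$, which requires first showing that the $L\times L$ polynomial pencil $\delta_{\cR}(x)$ is invertible over the free skew field, and then a second application with matrix algebras to transfer the resulting identity to every point of $\Omega_{sm}(\cR)$. For the invertibility of $\delta_{\cR}$ in $\KK\plangle\ul{x}\prangle^{L\times L}$, I invoke Lemma \ref{lem:25Jun19a}: its entries are polynomials, so the domain hypothesis is automatic, and it suffices to exhibit a matrix tuple at which the evaluation has nonzero determinant. A convenient choice is $\ul{X}^{0}=\ul{Y}$: because $\Omega(\cR)$ is closed under direct sums (Theorem \ref{thm:19Oct17a}), $I_{s}\otimes\ul{Y}=\bigoplus_{i=1}^{s}\ul{Y}$ lies in $\Omega_{s^{2}}(\cR)$, so $\Lambda_{\ul{\bA},\ul{Y}}(I_{s}\otimes\ul{Y})$ is invertible; conjugation by the canonical permutation $E(L,s)$ identifies this pencil with $\delta_{\cR}(\ul{Y})=I_{Ls}-\sum_{k}[\bA_{k}(I_{s})\otimes Y_{k}-\bA_{k}(Y_{k})\otimes I_{s}]$, forcing $\det\delta_{\cR}(\ul{Y})\ne 0$ and yielding $\delta_{\cR}^{-1}\in\KK\plangle\ul{x}\prangle^{L\times L}$.

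With this in hand, I apply Proposition \ref{prop:3Jul17a} with $\cA=\KK\plangle\ul{x}\prangle$ and $\ul{\fa}=(x_{1},\ldots,x_{d})$. A direct computation of the extended operators $\bA_{k}^{\cA},\bB_{k}^{\cA}$ shows that $\Lambda_{\ul{\bA},\ul{Y}}^{\cA}(I_{s}\otimes\ul{x})$ is exactly $\delta_{\cR}(x)$ viewed in $\cA^{L\times L}$, which is invertible by the previous step, and $\cR^{\cA}(I_{s}\otimes\ul{x})=\fr_{\cR}(x)$ by construction. The proposition then delivers $f\in\cA$ with $\fr_{\cR}=I_{s}\otimes f$, which is exactly $(\ref{eq:2Jul19a})$.

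For the pointwise claim, fix $\ul{X}\in\Omega_{sm}(\cR)$. Direct-sum closure of $\Omega(\cR)$ gives $I_{s}\otimes\ul{X}=\bigoplus_{i=1}^{s}\ul{X}\in\Omega_{s\cdot sm}(\cR)$, and the permutation identity above forces $\det\delta_{\cR}(\ul{X})\ne 0$. A further application of Lemma \ref{lem:25Jun19a} at the point $\ul{X}$ produces nc rational expressions for the entries of $\delta_{\cR}^{-1}$ whose domains contain $\ul{X}$; substituting them into the explicit formula for $\fr_{\cR}$ yields a nc rational expression for $f$ regular at $\ul{X}$, so $\ul{X}\in dom_{sm}(f)$. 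To match values, I apply Proposition \ref{prop:3Jul17a} once more, now with $\cA=\KK^{sm\times sm}$ and $\ul{\fa}=\ul{X}$, obtaining $\cR^{\cA}(I_{s}\otimes\ul{X})=I_{s}\otimes h(\ul{X})$ for some $h(\ul{X})\in\cA$; since $\cR$ is a nc function respecting direct sums, $\cR(I_{s}\otimes\ul{X})=I_{s}\otimes\cR(\ul{X})$, forcing $h(\ul{X})=\cR(\ul{X})$, while naturality of the algebraic construction under the evaluation homomorphism $\KK\plangle\ul{x}\prangle\rightarrow\KK^{sm\times sm}$ (well-defined at $\ul{X}$ because $\ul{X}\in dom_{sm}(f)$) identifies $h(\ul{X})$ with $f(\ul{X})$. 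Specializing to $m=1,\ul{X}=\ul{Y}$ gives $\ul{Y}\in dom_{s}(f)$. The principal subtlety throughout is bridging the block-wise action of $\bA_{k}$ defining $\Omega_{sm}(\cR)$ with the tensor substitution appearing in $\delta_{\cR}(\ul{X})$; the bridge is the identity $I_{s}\otimes\ul{X}=\bigoplus_{i=1}^{s}\ul{X}$ combined with direct-sum closure of $\Omega(\cR)$ and the canonical permutation $E(L,\cdot)$.
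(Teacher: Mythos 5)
Your overall architecture --- invertibility of $\delta_{\cR}$ over the free skew field, Proposition \ref{prop:3Jul17a} with $\cA=\KK\plangle\ul{x}\prangle$, then Lemma \ref{lem:25Jun19a} at a point of $\Omega_{sm}(\cR)$ --- is the paper's, but the step you use to obtain invertibility of $\delta_{\cR}$ at a point is incorrect, and it hides the one genuinely deep ingredient. You claim that $\delta_{\cR}(\ul{Y})$ is permutation-conjugate to $\Lambda_{\ul{\bA},\ul{Y}}(I_s\otimes\ul{Y})$. It is not: since $\bA_k$ acts on the $\sts$ blocks, one has $\Lambda_{\ul{\bA},\ul{Y}}(I_s\otimes\ul{X})=I_s\otimes\Lambda_{\ul{\bA},\ul{Y}}(\ul{X})$, and in particular $\Lambda_{\ul{\bA},\ul{Y}}(I_s\otimes\ul{Y})=I_{Ls}$; whereas the canonical shuffle identifies $\delta_{\cR}(\ul{X})$ with $\Lambda_{\ul{\bA},\ul{Y}}(\ul{X}\otimes I_s)$, i.e.\ the pencil at the tuple with the tensor factors in the \emph{opposite} order. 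No conjugation can carry $I_{Ls}$ to $\delta_{\cR}(\ul{Y})$, which is not the identity in general. Hence direct-sum closure of $\Omega(\cR)$ alone gives neither $\det\delta_{\cR}(\ul{Y})\ne0$ nor $\det\delta_{\cR}(\ul{X})\ne0$ for $\ul{X}\in\Omega_{sm}(\cR)$; what is actually needed is that $\ul{X}\otimes I_s$ (not $I_s\otimes\ul{X}$) lies in $\Omega(\cR)$. Since $\ul{X}\otimes I_s$ and $I_s\otimes\ul{X}$ are related by a similarity that scrambles the $\sts$ block structure, this is precisely where the paper invokes the similarity invariance of $\Omega(\cR)$, Theorem \ref{thm:6Aug19a} --- a substantial result resting on controllability, observability and the higher-order difference-differential calculus of Section \ref{subsec:cal}. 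Your proposal never uses Theorem \ref{thm:6Aug19a} (indeed it never uses controllability or observability at all), so the invertibility of $\delta_{\cR}$, and with it everything downstream, is unsupported.

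A secondary, more minor looseness: in the value-matching step you appeal to ``naturality under the evaluation homomorphism $\KK\plangle\ul{x}\prangle\rightarrow\KK^{sm\times sm}$''; there is no such homomorphism, since evaluation of nc rational functions is only partially defined. The paper instead matches values by a direct computation, showing $\fr_{\cR}(\ul{X})=E(s,sm)^{-1}\cR(\ul{X}\otimes I_s)E(s,sm)=\cR(I_s\otimes\ul{X})=I_s\otimes\cR(\ul{X})$, which again uses that $\ul{X}\otimes I_s\in\Omega(\cR)$ and that $\cR$ is a nc function on $\Omega(\cR)$ (Theorem \ref{thm:19Oct17a}). This part is repairable once the similarity-invariance ingredient is in place, but as written it does not stand on its own.
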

\begin{remark}
\label{rem:11Aug19a}
The equality in 
(\ref{eq:2Jul19a})
holds in $\KK\plangle\ul{x}\prangle^{\sts}$, 
in the sense that 
$\ul{X}\in dom(\fr_{\cR})$ 
if and only if 
$\ul{X}\in \bigcap_{1\le i,j\le s}dom(R_{ij})$,
where 
$R_{ii}$ (for $1\le i\le s$) 
are nc rational expressions which represent 
$f$, 
and 
$R_{ij}$ (for 
$1\le i\ne j\le s$) 
are nc rational expressions which represent 
$0$. 
In that case, the evaluation is given by 
$\fr_{\cR}(\ul{X})=
\big(R_{ij}(\ul{X})
\big)_{1\le i,j\le s}$.
\end{remark}
\begin{proof}
In Theorem
\ref{thm:19Oct17a} we showed that
$\ul{Y}\in\Omega_s(\cR)$, while 
$\Omega(\cR)$ 
is closed under direct sums 
and similarities, where
$\cR$ 
is the nc function given by 
(\ref{eq:26Jun19c}), 
thus 
$I_s\otimes \ul{Y}\in 
\Omega_{s^2}(\cR)$ 
and as 
$\ul{Y}\otimes I_s=  E(s,s)(I_s\otimes\ul{Y})E(s,s)^{-1}$ 
(cf. equation 
(\ref{eq:24Jan19a})), we get that 
$\ul{Y}\otimes I_s\in 
\Omega_{s^2}(\cR)$, i.e, that the matrix
\begin{multline*}
\Lambda_{\ul{\bA},\ul{Y}}
(\ul{Y}\otimes I_s)=
I_{Ls}-\sum_{k=1}^d 
(Y_k\otimes I_s-I_s\otimes Y_k)
\bA_k
\\ =I_{Ls}-\sum_{k=1}^d
\big[Y_k\otimes\bA_k(I_s)
-I_s\otimes \bA_k(Y_k)
\big]
\end{multline*}
is invertible, which implies that the matrix
\begin{multline*}
\delta_{\cR}(\ul{Y})=I_{L}\otimes I_s-\sum_{k=1}^d
\big[ 
\bA_k(I_s)\otimes Y_k-
\bA_k(Y_k)\otimes I_s
\big]
\\ =E(s,L)
\Lambda_{\ul{\bA},\ul{Y}}
(\ul{Y}\otimes I_s) 
E(s,L)^{-1}
\end{multline*}
is invertible.
It is well known (e.g, it follows immediately from Lemma 
\ref{lem:25Jun19a}) 
that an element from
$\KK\plangle\ul{x}\prangle^{\LTL}$ is invertible in 
$\KK\plangle\ul{x}\prangle^{\LTL}$ 
if and only if its evaluation at some point is invertible. 
Therefore 
$\delta_{\cR}$ 
is invertible in 
$\KK\plangle\ul{x}\prangle^{\LTL}$, since the matrix 
$\delta_{\cR}(\ul{Y})$ is invertible.

By applying Proposition
\ref{prop:3Jul17a}
with the free field
$\cA=\KK\plangle\ul{x}\prangle$ and 
$\fa_1=x_1,\ldots,\fa_d=x_d$, 
the invertibility of $\delta_{\cR}$ 
in 
$\KK\plangle\ul{x}\prangle^{\LTL}$
---
which is equivalent for equation 
(\ref{eq:17Aug18b}) ---
implies that there exists 
$f\in\KK\plangle\ul{x}\prangle$ such that 
$\cR^{\cA}(I_s\otimes\ul{x})=I_s\otimes f(\ul{x})$, i.e., 
$\fr_{\cR}=I_s\otimes f.$

Next, let 
$\ul{X}\in\Omega_{sm}(\cR)$, 
similarly to the arguments above, we get that
$\delta_{\cR}(\ul{X})$ 
is invertible, thus it follows from Lemma
\ref{lem:25Jun19a} 
that  there exist nc rational expressions
$S_{ij}$ 
(for
$1\le i,j\le L)$, 
such that 
$\ul{X}\in\bigcap_{1\le i,
j\le L}dom(S_{ij})$ and the 
$\LTL$ matrix valued nc rational expression
$(S_{ij})_{1\le i,j\le L}$ 
represents 
$\delta_{\cR}^{-1}$.
Therefore, the 
$\sts$ 
matrix of nc rational expressions given by
\begin{multline*}
\wt{R}(\ul{x})=D+C
\begin{pmatrix}
S_{ij}(\ul{x})\\
\end{pmatrix}_{1\le i,j\le L}
\begin{pmatrix}
p_{ij}(\ul{x})\\
\end{pmatrix}_{1\le i\le L,\, 
1\le j\le s}
\\ =
\begin{pmatrix}
d_{ij}+\sum_{k,\ell=1}^L 
c_{ik}S_{k\ell}(\ul{x})
p_{\ell j}(\ul{x})\\
\end{pmatrix}_{1\le i,j\le s},
\end{multline*}
where     
$\begin{pmatrix}
p_{ij}\\
\end{pmatrix}_{1\le i\le L,\, 1\le j\le s}
=\sum_{k=1}^d
\big[\bB_k(I_s)x_k-\bB_k(Y_k)\big]$ 
is an 
$L\times s$ 
matrix of nc polynomials,
represents the 
$\sts$ 
matrix of nc rational functions 
$\fr_{\cR}$.
As
$\ul{X}\in\bigcap_{1\le i,j\le L}dom(S_{ij})$, it follows that 
$$\ul{X}\in dom_{sm}(\ul{e}_1^T\wt{R}\ul{e}_1)
= dom_{sm}\Big(d_{11}+\sum_{k,\ell=1}^L
c_{1k}S_{k\ell}p_{\ell1}\Big),$$
however 
$\ul{e}_1^T\wt{R}\ul{e}_1$ 
represents the nc rational function 
$\ul{e}_1^T\fr_{\cR}\ul{e}_1=\ul{e}_1^T(I_s\otimes f)\ul{e}_1=f$, 
hence 
$\ul{X}\in dom_{sm}(f)$. 
Moreover, as
\begin{multline*}
\delta_{\cR}(\ul{X})=I_{Lsm}-\sum_{k=1}^d
\big[\bA_k(I_s)\otimes X_k-
\bA_k(Y_k)\otimes I_{sm}\big]
\\=
E(L,sm)^{-1}
\Big(I_{Lsm}-
\sum_{k=1}^d
\big[X_k\otimes \bA_k(I_s)-I_{sm}
\otimes\bA_k(Y_k)\big]\Big)
E(L,sm)
\\ =E(L,sm)^{-1}
\Lambda_{\ul{\bA},\ul{Y}}
(\ul{X}\otimes I_s)
E(L,sm),
\end{multline*}
one can evaluate 
$\fr_{\cR}(\ul{X})$ in the following way
\begin{multline*}
\fr_{\cR}(\ul{X})=
D\otimes I_{sm}
+(C\otimes I_{sm})
\delta_{\cR}(\ul{X})^{-1}
\sum_{k=1}^d 
\big[\bB_k(I_s)\otimes X_k-
\bB_k(Y_k)\otimes I_{sm}\big]
\\=D\otimes I_{sm}+
(C\otimes I_{sm})
E(L,sm)^{-1}
\Lambda_{\ul{\bA},\ul{Y}}
(\ul{X}\otimes I_s)^{-1}
E(L,sm)
\\ \Big(
\sum_{k=1}^d 
E(L,sm)^{-1}\big[X_k\otimes 
\bB_k(I_s)-I_{n}\otimes\bB_k(Y_k)\big]E(s,sm)
\Big)
\\=
E(s,sm)^{-1}
\Big(I_{sm}\otimes 
D+(I_{sm}\otimes C)
\Lambda_{\ul{\bA},\ul{Y}}
(\ul{X}\otimes I_s)^{-1}
\sum_{k=1}^d (X_k\otimes 
I_s-I_{sm}\otimes Y_k)\bB_k
\Big)
\\ E(s,sm)
=E(s,sm)^{-1} 
\cR(\ul{X}\otimes I_s)
E(s,sm)
\\ =E(s,sm)^{-1}\cR
\big(E(s,sm)\cdot(I_s\otimes \ul{X})
\cdot E(s,sm)^{-1}
\big)
E(s,sm)
\\ =\cR(I_s\otimes\ul{X})
=I_s\otimes \cR(\ul{X}),
\end{multline*}
while 
$\fr_{\cR}=I_s\otimes f$ implies 
$\fr_{\cR}(\ul{X})
=I_s\otimes f(\ul{X})$, 
therefore 
$f(\ul{X})=\cR(\ul{X})$.
\end{proof}

Before we prove one of the main results of the paper, we prove two useful
properties of nc rational functions and expressions:
\begin{proposition}
\label{prop:4Jul19a}
Let $\fR_1,\fR_2\in\KK\plangle\ul{x}\prangle$, 
let $R$ 
be a nc rational expression in
$x_1,\ldots,x_d$
over
$\KK,\,
n\in\NN$, 
and 
$\ul{Z}\in(\KK^{\ntn})^d$. 
\\1.
If 
$I_m\otimes\ul{Z}\in dom_{mn}(R)$
for some $m\in\NN$, then 
$\ul{Z}\in dom_n(R)$.
\\2.
If
$\fR_1(\ul{X})=\fR_2(\ul{X})$ for every 
$\ul{X}\in dom(\fR_1)\cap dom(\fR_2)$, 
then $\fR_1=\fR_2$.
\end{proposition}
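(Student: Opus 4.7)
The plan is to handle the two parts separately: part 1 yields to structural induction on the nc rational expression $R$, while part 2 is a short chase through the definitions.

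For part 1, I would induct on the build of $R$. The polynomial base case is immediate since $dom_n(R)=(\KK^{n\times n})^d$. For $R=R_1+R_2$ or $R=R_1R_2$, the identity $dom(R)=dom(R_1)\cap dom(R_2)$ shows that $I_m\otimes\ul{Z}\in dom(R)$ implies $I_m\otimes\ul{Z}\in dom(R_i)$, so the induction hypothesis gives $\ul{Z}\in dom(R_i)$ for $i=1,2$, whence $\ul{Z}\in dom(R)$. The essential case is $R=R_1^{-1}$: from $I_m\otimes\ul{Z}\in dom(R_1)$ together with $R_1(I_m\otimes\ul{Z})$ invertible, the induction hypothesis applied to $R_1$ first yields $\ul{Z}\in dom(R_1)$. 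Only then, using the fact noted in Subsection \ref{sec:NCfunctions} that every nc rational expression is a nc function on its domain of regularity, together with the identification $I_m\otimes\ul{Z}=\ul{Z}\oplus\cdots\oplus\ul{Z}$, I obtain the direct-sum identity $R_1(I_m\otimes\ul{Z})=I_m\otimes R_1(\ul{Z})$; invertibility of the left-hand side then forces invertibility of $R_1(\ul{Z})$, since $\det(I_m\otimes A)=(\det A)^m$, and therefore $\ul{Z}\in dom(R_1^{-1})=dom(R)$.

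For part 2, I would pick arbitrary representatives $R_1\in\fR_1$ and $R_2\in\fR_2$. Since $dom(R_i)\subseteq dom(\fR_i)$ by \eqref{eq:9Mar19a}, on the intersection $dom(R_1)\cap dom(R_2)\subseteq dom(\fR_1)\cap dom(\fR_2)$ the hypothesis together with $\fR_i(\ul{X})=R_i(\ul{X})$ gives $R_1(\ul{X})=R_2(\ul{X})$. Thus $R_1$ and $R_2$ are $(\KK^d)_{nc}$-evaluation equivalent, and since $\fR_i$ is by Definition \ref{def:21Apr20c} the equivalence class of $R_i$, we conclude $\fR_1=\fR_2$.

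The main obstacle is the inverse case in part 1, which involves a subtle chicken-and-egg issue: one cannot directly write $R_1(I_m\otimes\ul{Z})=I_m\otimes R_1(\ul{Z})$ without already knowing that $\ul{Z}\in dom(R_1)$, so the inductive step on $R_1$ must be carried out \emph{before} invoking the nc-function/direct-sum identity. Once the steps are in this order, the argument closes cleanly, and part 2 is then essentially a formality.
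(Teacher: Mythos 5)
Your proposal is correct and follows essentially the same route as the paper: part 1 is proved by induction on the structure of the expression (the paper calls this "synthesis"), with the inverse case handled in exactly the same order — first apply the induction hypothesis to get $\ul{Z}\in dom_n(R_1)$, then use that the expression respects direct sums to write $R_1(I_m\otimes\ul{Z})=I_m\otimes R_1(\ul{Z})$ and deduce invertibility of $R_1(\ul{Z})$. Part 2 likewise coincides with the paper's argument, passing to representatives and invoking the definition of $(\KK^d)_{nc}$-evaluation equivalence.
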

Using the terminology in
\cite[pp. 95--96]{KV1}, 
item $1$ says that 
$dom(R)$ is a radical set.
\begin{proof}
1. By synthesis: we only show the step of going from a nc rational expression
to its inverse, as the proof for the other parts (going from two nc rational
expressions to their sum and product, as well as the case of nc polynomial)
are either similar or trivial. 
Suppose that our statement is known for a nc rational expressions
$R$ 
and let 
$\ul{Z}$ 
be such that 
$I_m\otimes \ul{Z}\in 
dom_{mn}(R^{-1})$ 
for some 
$m\in\NN$.
Then 
$I_m\otimes 
\ul{Z}\in 
dom_{mn}(R)$ 
and 
$R(I_m\otimes\ul{Z})$ 
is invertible, 
but from the assumption (on 
$R$) we get that 
$\ul{Z}\in dom_n(R)$. 
As 
$R$ 
respects direct sums we get that
$R(I_m\otimes\ul{Z})=
I_m\otimes R(\ul{Z})$ 
is invertible, hence 
$R(\ul{Z})$
is invertible as well, i.e., 
$\ul{Z}\in 
dom_{n}(R^{-1})$ 
as needed. 
\\\\
2. Let $R_1\in\fR_1$ 
and 
$R_2\in\fR_2$ be (non-degenerate) nc rational expressions. 
For every 
$\ul{X}\in dom(R_1)\cap dom(R_2)$, we have
$\ul{X}\in dom(\fR_1)\cap dom(\fR_2)$ and hence $R_1(\ul{X})=\fR_1(\ul{X})=\fR_2(\ul{X})=R_2(\ul{X})$,
i.e., $R_1$ and $R_2$
are
$(\KK^d)_{nc}-$evaluation equivalent, so they represent the same nc rational
function, meaning $\fR_1=\fR_2$.
\end{proof}
We now show that Theorems 
\ref{thm:26Jun19a}
and
\ref{thm:19Oct17a}, 
imply one of our main results, 
that is --- roughly speaking --- we can evaluate
a nc rational function and its
domain, using any of
its minimal realizations,  centred at any chosen point from its domain:
\begin{theorem}
\label{thm:3Jun19a}
Let 
$\fR\in\KK\plangle x_1,\ldots,x_d\prangle$. 
For every 
integer
$s\in\NN$,
a point 
$\ul{Y}\in dom_{s}(\fR)$, and
a minimal nc Fornasini--Marchesini realization 
$\cR$ 
centred at 
$\ul{Y}$ 
of 
$\fR$,
we have
\begin{enumerate}
\item
$dom_{sm}(\fR)=\Omega_{sm}(\cR)$ 
and 
$\fR(\ul{X})=\cR(\ul{X})$, 
for every 
$\ul{X}\in dom_{sm}(\fR)$
and
$m\in\NN$.
\item
$dom_n(\fR)=\big\{ 
\ul{X}\in(\KK^{\ntn})^d: 
I_s\otimes\ul{X}\in\Omega_{sn}(\cR)\big\}$ 
and
$I_s\otimes
\fR(\ul{X})=\cR(I_s\otimes \ul{X})$,
for every $\ul{X}\in dom_n(\fR)$ and $n\in\NN$.
\end{enumerate}
\end{theorem}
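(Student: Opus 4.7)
The plan is to reduce Theorem \ref{thm:3Jun19a} to Theorem \ref{thm:26Jun19a} (which extracts a nc rational function from a realization satisfying the $\cL-\cL\cA$ conditions), combined with Theorem \ref{thm:MainThmFirstPaper} (the one-sided inclusions already proved in \cite{PV1}), and unified by Proposition \ref{prop:4Jul19a}. First, since $\cR$ is a minimal realization of $\fR$, Theorem \ref{thm:minObsCont} makes $\cR$ controllable and observable, and the Taylor--Taylor coefficients of $\fR$ around $\ul{Y}$ are expressed through $\cR$ via (\ref{eq:14Apr20c}); since $\fR$ is a nc function on $Nilp(\ul{Y}) \subseteq dom(\fR)$ and agrees there with its TT series (\ref{eq:5Ma17a}), the forward direction of Lemma \ref{lem:13Dec17a} forces the coefficients of $\cR$ to satisfy the $\cL-\cL\cA$ conditions.

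Applying Theorem \ref{thm:26Jun19a} then produces a nc rational function $f \in \KK\plangle\ul{x}\prangle$ with $\Omega_{sm}(\cR) \subseteq dom_{sm}(f)$ and $\cR(\ul{X}) = f(\ul{X})$ for every $\ul{X} \in \Omega_{sm}(\cR)$. The next step, which I view as the central one, is to identify $f$ with $\fR$. For arbitrary $n \in \NN$ and $\ul{X} \in dom_n(\fR)$, Theorem \ref{thm:MainThmFirstPaper}(1) gives $I_s \otimes \ul{X} \in \Omega_{sn}(\cR)$ together with $I_s \otimes \fR(\ul{X}) = \cR(I_s \otimes \ul{X})$. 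Since $\Omega_{sn}(\cR) \subseteq dom_{sn}(f)$, Proposition \ref{prop:4Jul19a}(1) applied with $m = s$ yields $\ul{X} \in dom_n(f)$, while the fact that $f$ respects direct sums gives $\cR(I_s \otimes \ul{X}) = f(I_s \otimes \ul{X}) = I_s \otimes f(\ul{X})$. Comparing diagonal blocks, $\fR(\ul{X}) = f(\ul{X})$ on all of $dom(\fR)$, so Proposition \ref{prop:4Jul19a}(2) concludes $f = \fR$.

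Items 1 and 2 then fall out quickly. For item 1, the inclusion $dom_{sm}(\fR) \subseteq \Omega_{sm}(\cR)$ is Theorem \ref{thm:MainThmFirstPaper}(2) (with $n = sm$, so $s \mid n$), while the reverse inclusion $\Omega_{sm}(\cR) \subseteq dom_{sm}(f) = dom_{sm}(\fR)$ comes from Theorem \ref{thm:26Jun19a}; the evaluation identity is inherited from the latter. For item 2, the forward direction is Theorem \ref{thm:MainThmFirstPaper}(1); conversely, if $I_s \otimes \ul{X} \in \Omega_{sn}(\cR)$ then by item 1 (now applied at level $sn$) we have $I_s \otimes \ul{X} \in dom_{sn}(\fR)$, and Proposition \ref{prop:4Jul19a}(1) gives $\ul{X} \in dom_n(\fR)$. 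The intertwining formula $I_s \otimes \fR(\ul{X}) = \cR(I_s \otimes \ul{X})$ is again Theorem \ref{thm:MainThmFirstPaper}(1).

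The main obstacle, as indicated above, is the step $f = \fR$ at levels $n$ that are not multiples of $s$: Theorem \ref{thm:26Jun19a} provides information only at levels of the form $sm$, and Theorem \ref{thm:MainThmFirstPaper}(2) also requires $s \mid n$. The trick is to inflate $\ul{X}$ to $I_s \otimes \ul{X}$ at the compatible level $sn$, where both results apply, and then descend back via the radical property of the domain encoded in Proposition \ref{prop:4Jul19a}(1). A mild auxiliary point is ensuring that $Nilp(\ul{Y}) \subseteq dom(\fR)$ so that the TT series of $\fR$ around $\ul{Y}$ is a bona fide nc function there, which is the hypothesis needed to trigger Lemma \ref{lem:13Dec17a}; this is standard from the theory developed in \cite{PV1} and \cite{KV1}.
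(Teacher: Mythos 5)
Your proposal is correct and follows essentially the same route as the paper's proof: minimality plus Lemma \ref{lem:13Dec17a} yields the $\cL-\cL\cA$ conditions, Theorem \ref{thm:26Jun19a} produces $f$ agreeing with $\cR$ on $\Omega_{sm}(\cR)$, and the identification $f=\fR$ at all levels is achieved exactly as in the paper by inflating to $I_s\otimes\ul{X}$ at level $sn$ via Theorem \ref{thm:MainThmFirstPaper} and descending with both parts of Proposition \ref{prop:4Jul19a}. Items 1 and 2 are then deduced the same way, so there is nothing to correct.
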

\begin{proof}
In Theorem
\ref{thm:MainThmFirstPaper}
we proved that
$$dom_{sm}(\fR)\subseteq\Omega_{sm}(\cR)
\text{ and }\fR(\ul{X})=\cR(\ul{X}),\,
\forall\ul{X}\in dom_{sm}(\fR),\,
\forall m\in\NN.$$
Since 
$\fR$
admits the realization 
$\cR$, it follows from Lemma 
\ref{lem:13Dec17a} 
that the coefficients of 
$\cR$
must satisfy the 
$\cL-\cL\cA$ 
conditions, thus Theorem 
\ref{thm:26Jun19a}
guarantees the existence of a nc rational function 
$f\in\KK\plangle\ul{x}\prangle$ 
for which 
$$\Omega_{sm}(\cR)\subseteq dom_{sm}(f)
\text{ and }f(\ul{X})=\cR(\ul{X}),\,
\forall \ul{X}\in\Omega_{sm}(\cR),\,
\forall m\in\NN.$$ 
Therefore, we have
\begin{align}
\label{eq:10Nov19c}
dom_{sm}(\fR)\subseteq dom_{sm}(f)
\text{ and }\fR(\ul{X})=f(\ul{X})
\text{ for every }\ul{X}\in dom_{sm}(\fR).
\end{align}
Next, we show that 
(\ref{eq:10Nov19c})
is true for
$sm$
replaced by any 
$n\in\NN$. 
If 
$\ul{X}\in dom_n(\fR)$, 
it follows from Theorem
\ref{thm:MainThmFirstPaper}
that 
$I_s\otimes\ul{X}\in\Omega_{sn}(\cR)$ 
and 
$I_s\otimes \fR(\ul{X})= \cR(I_s\otimes\ul{X})$,
hence 
$I_s\otimes\ul{X}\in 
dom_{sn}(f)$ and $f(I_s\otimes\ul{X})=\cR(I_s\otimes\ul{X}).$ 
From the first part of Proposition 
\ref{prop:4Jul19a}
it follows that $\ul{X}\in dom_n(f)$ 
and also that
$I_s\otimes f(\ul{X})
=f(I_s\otimes\ul{X})=\cR(I_s\otimes\ul{X})
=I_s\otimes\fR(\ul{X}),$ so
$f(\ul{X})=\fR(\ul{X})$,
i.e., we showed that 
$$dom_n(\fR)\subseteq dom_n(f) 
\text{ and }
\fR(\ul{X})=f(\ul{X}),\, 
\forall 
\ul{X}\in dom_n(\fR),\, \forall n\in\NN.$$
Thus from the second part of Proposition
\ref{prop:4Jul19a}, we obtain that $\fR=f$ and thus 
$$dom_{sm}(\fR)=dom_{sm}(f)=\Omega_{sm}(\cR)
\text{ and }\fR(\ul{X})=f(\ul{X})
\text{ for every }\ul{X}\in dom_{sm}(\fR).$$
Finally, if 
$I_s\otimes \ul{X}\in\Omega_{sn}(\cR)
=dom_{sn}(\fR)$, then from the first part of Proposition 
\ref{prop:4Jul19a}, we have
$\ul{X}\in dom_n(\fR)$, 
hence 
$I_s\otimes \fR(\ul{X})=
\fR(I_s\otimes\ul{X})=
\cR(I_s\otimes \ul{X})$.
\end{proof} 

A direct consequence of the results above, which yields a 
representation  that is independent of matrix centre for all nc rational
functions, in the spirit of Cohn and Reutenauer, is now presented.
\begin{cor}
\label{cor:25Mar21a}
Let 
$\fR\in\KK\plangle x_1,\ldots,x_d\prangle$.
\begin{itemize}
\item[1.]
There exist 
$d_0\in\KK$,
$\ul{c}\in\KK^{1\times L}$,
$\ul{b_0},\ldots,\ul{b_d}\in\KK^{L\times 1}$,
and 
$M_0,\ldots,M_d\in\KK^{\LTL},$
where $L$ is the McMillan degree of 
$\fR$,
such that 
\begin{align}
\label{eq:26Mar21a}
\fR(x_1,\ldots,x_d)=
d_0+\ul{c}
\Big(M_0-\sum_{k=1}^d M_kx_k\Big)^{-1}
\Big(\ul{b_0}-\sum_{k=1}^d \ul{b_k}x_k\Big)
\end{align}
in the free skew field and
$\ul{X}\in dom_n(\fR)$ if and only if $\det\Big(
M_0\otimes I_n-\sum_{k=1}^d M_k\otimes X_k\Big)\ne0$. 
\item[2.]
There exist 
$\wt{\ul{c}}\in\KK^{1\times(L+1)}$, $\wt{M_0},
\ldots,\wt{M_d}\in\KK^{(L+1)\times(L+1)}$,
and $\wt{\ul{b}}\in\KK^{(L+1)\times 1}$
such that
\begin{align}
\label{eq:26Mar21b}
\fR(x_1,\ldots,x_d)=\wt{\ul{c}}
\Big(\wt{M_0}-\sum_{k=1}^d
\wt{M_k}x_k
\Big)^{-1}
\wt{\ul{b}}
\end{align}
in the free skew field and $\ul{X}\in dom_n(\fR)$
if and only if
$\det\Big(
\wt{M_0}\otimes I_n-\sum_{k=1}^d
\wt{M_k}\otimes X_k\Big)\ne0$.
\end{itemize}
\end{cor}
The equality in
(\ref{eq:26Mar21a}) means that
for every 
$n\in\NN$
and
$\ul{X}\in dom_n(\fR)$, we have
\begin{align}
\label{eq:25Mar21a}
\fR(\ul{X})=d_0\otimes I_n+(\ul{c}\otimes I_n)
\Big(M_0\otimes I_n-\sum_{k=1}^d M_k\otimes X_k\Big)^{-1}
\Big( \ul{b_0}\otimes I_n-
\sum_{k=1}^d\ul{b_k}\otimes X_k\Big).
\end{align}
The representation in
(\ref{eq:26Mar21b})
is well known in the literature; it was used extensively by Fliess and 
presented precisely in the papers
\cite{CR1,CR2}. We notice that such a representation does not involve a matrix
centre at all, in contrary to our theory, while the domain of the rational
function coincides with the invertibility set of the corresponding pencil.
The representation in
(\ref{eq:26Mar21b}) is clearly not minimal, but it is also not that far from
being minimal as the state space dimension differs only by $1$ from the minimal
dimension.
On the other hand, the representation in
(\ref{eq:26Mar21a}) is minimal and good in terms of the precise domain of
the function, with the disadvantage that it has not been studied earlier.
\begin{proof}
Let 
$\ul{Y}\in dom_s(\fR)$ be any arbitrary point from the domain of 
$\fR$ and as in Theorem 
\ref{thm:3Jun19a}, let 
$\cR$
be a minimal nc Fornasini--Marchesini realization of $\fR$
that is centred at 
$\ul{Y}$. From the proof of Theorem 
\ref{thm:3Jun19a} it follows that for every 
$\ul{X}\in dom_n(\fR)$ we have
\begin{multline*}
I_s\otimes \fR(\ul{X})=\fr_{\cR}(\ul{X})
\\=
D\otimes I_{n}
+(C\otimes I_{n})
\Big(I_{Ln}-\sum_{k=1}^d
\big[\bA_k(I_s)\otimes X_k-
\bA_k(Y_k)\otimes I_{n}\big]\Big)^{-1}
\\ \sum_{k=1}^d 
\big[\bB_k(I_s)\otimes X_k-
\bB_k(Y_k)\otimes I_{n}\big]
\end{multline*}
and hence 
\begin{multline*}
\fR(\ul{X})=(\ul{e_1}^T\otimes I_n)\fr_{\cR}
(\ul{X})(\ul{e_1}\otimes I_n)
\\=(\ul{e_1}^TD\ul{e_1})\otimes I_n
+
\big(\big[\ul{e_1}^TC\big]\otimes I_n\big)
\Big(\Big[I_L+\sum_{k=1}^d\bA_k(Y_k)
\Big]\otimes I_n-
\sum_{k=1}^d
\bA_k(I_s)\otimes X_k
\Big)^{-1}\\
\Big(
\Big[-\sum_{k=1}^d\bB_k(Y_k)\ul{e_1}\Big]\otimes I_n+\sum_{k=1}^d
\big[\bB_k(I_s)\ul{e_1}\big]\otimes X_k
\Big),
\end{multline*}
that is exactly the form in 
(\ref{eq:25Mar21a}) with
$d_0=\ul{e_1}^TD\ul{e_1}$,
$\ul{c}=\ul{e_1}^TC$,
$M_0=I_L+\sum_{k=1}^d
\bA_k(Y_k)$,
$\ul{b_0}=-\sum_{k=1}^d 
\bB_k(Y_k)\ul{e_1}$, and 
$M_j=\bA_j(I_s)$
and 
$\ul{b_j}=- \bB_j(I_s)\ul{e_1}$
for 
$j=1,\ldots,d$.

Finally, similarly to the proof of
Theorem 
\ref{thm:26Jun19a}, it follows that 
\begin{multline*}
\det\Big(
M_0\otimes I_n-\sum_{k=1}^d M_k\otimes X_k\Big)\ne0
\\ \iff
\det
\Big(I_{Ln}-\sum_{k=1}^d
\big[\bA_k(I_s)\otimes X_k-
\bA_k(Y_k)\otimes I_{n}\big]\Big)\ne0
\\\iff 
\det\Big(\Lambda_{\ul{\bA},\ul{Y}}(\ul{X}\otimes I_s)\Big)\ne0
\iff \ul{X}\otimes I_s \in \Omega_{sn}(\cR)
\\\iff I_s\otimes \ul{X}\in \Omega_{sn}(\cR)\iff \ul{X}\in dom_n(\fR),
\end{multline*}
where we used the fact that $\Omega(\cR)$ is invariant under similarity and
also the second part of Theorem \ref{thm:3Jun19a}.
Finally, by letting 
\begin{multline*}
\wt{\ul{c}}=\begin{bmatrix}
-\ul{c}&d_0\\\end{bmatrix},\, 
\wt{\ul{b}}=\begin{bmatrix}
0_{L\times 1}\\
1\\
\end{bmatrix},\,
\wt{M_0}=
\begin{bmatrix}
M_0&\ul{b_0}\\
0&1\\
\end{bmatrix},
\\ \text{ and }
\wt{M_j}=
\begin{bmatrix}
M_j&\ul{b_j}\\
0&0\\
\end{bmatrix} 
\text{ for every }
j=1,\ldots,d,
\end{multline*}
one can transform from the representation in
(\ref{eq:26Mar21a})
to the one in  
(\ref{eq:26Mar21b}), without changing the invertibility set of the pencils.
\end{proof}

Here is another immediate consequence of Theorems
\ref{thm:26Jun19a},
\ref{thm:3Jun19a}, and
Lemma
\ref{lem:13Dec17a}, 
which states that given a 
nc Fornasini--Marchesini realization that is controllable and observable,
it is the 
realization of a nc rational function if and 
only if its coefficients satisfy the 
$\cL-\cL\cA$ 
conditions:
\begin{theorem}
\label{thm:5Nov19a}
Let  
$\cR$ 
be a nc Fornasini--Marchesini realization centred at 
$\ul{Y}\in(\KK^{\sts})^d$ 
that is described by 
$(L;D,C,\ul{\bA},\ul{\bB})$, and suppose  $\cR$ is both controllable and
observable. 
The following are equivalent:
\begin{enumerate}
\item
There exists 
$\fR\in\KK\plangle \ul{x}\prangle$
that is regular at 
$\ul{Y}$, 
such that
$dom_{sm}(\fR)=\Omega_{sm}(\cR)$
and 
$\fR(\ul{X})=\cR(\ul{X})$,
for every 
$\ul{X}\in dom_{sm}(\fR)$ 
and 
$m\in\NN$.
\item
The coefficients of $\cR$
satisfy the 
$\cL-\cL\cA$ 
conditions (cf. equations 
(\ref{eq:12Jun17a})-(\ref{eq:12Jun17d})).
\item
There exists
$\fR\in\KK\plangle \ul{x}\prangle$
that is regular at 
$\ul{Y}$, 
such that
$dom_{n}(\fR)=
\{\ul{X}\in(\KK^{\ntn})^d: I_s\otimes\ul{X}
\in\Omega_{sn}(\cR)\}$
and 
$I_s\otimes \fR(\ul{X})=\cR(I_s\otimes\ul{X})$,
for every 
$\ul{X}\in dom_{n}(\fR)$
and 
$n\in\NN$.
\end{enumerate}
\end{theorem}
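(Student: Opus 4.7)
The proof is a packaging of machinery already established: Lemma \ref{lem:13Dec17a}, Theorem \ref{thm:minObsCont}, Theorem \ref{thm:26Jun19a}, and Theorem \ref{thm:3Jun19a}. My plan is to prove the cycle (1) $\Rightarrow$ (2) $\Rightarrow$ (1), and then handle (1) $\Leftrightarrow$ (3) separately via Theorem \ref{thm:3Jun19a}(2) together with a short tensor-product identity for the pencil. The hypothesis that $\cR$ is controllable and observable is crucial in both directions of (1) $\Leftrightarrow$ (2): it is needed for Lemma \ref{lem:13Dec17a} to extract the $\cL-\cL\cA$ conditions from being a nc function, and for Theorem \ref{thm:minObsCont} to identify $\cR$ as a minimal realization once we know it is \emph{some} realization.

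For (1) $\Rightarrow$ (2): the Taylor--Taylor expansion of $\fR$ around $\ul{Y}$ has coefficients $(\fR_\omega)_{\omega \in \cG_d}$ given by formulas (\ref{eq:14Apr20c}), which is exactly the statement that the series admits $\cR$ (Definition \ref{def:14Apr20a}). Because $\fR$ is a nc rational function regular on $Nilp(\ul{Y})$ (the pencil $\Lambda_{\ul{\bA},\ul{Y}}(\ul{X})$ is invertible via a Neumann series when $\ul{X} - I_m \otimes \ul{Y}$ is jointly nilpotent, so $Nilp(\ul{Y}) \subseteq \Omega(\cR) = dom(\fR)$), the restriction is a nc function on $Nilp(\ul{Y})$. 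Lemma \ref{lem:13Dec17a}, together with controllability and observability of $\cR$, then yields the $\cL-\cL\cA$ conditions. For (2) $\Rightarrow$ (1): Theorem \ref{thm:26Jun19a} directly produces $f \in \KK\plangle \ul{x}\prangle$ with $\ul{Y} \in dom_s(f)$, $\Omega_{sm}(\cR) \subseteq dom_{sm}(f)$, and $\cR(\ul{X}) = f(\ul{X})$ on $\Omega_{sm}(\cR)$. Set $\fR := f$. Then $\cR$ is a realization of $\fR$ centred at $\ul{Y}$, and since it is controllable and observable, Theorem \ref{thm:minObsCont} makes it minimal. Theorem \ref{thm:3Jun19a}(1) now upgrades the inclusion $\Omega_{sm}(\cR) \subseteq dom_{sm}(\fR)$ to the equality required in (1).

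For (1) $\Leftrightarrow$ (3): once (1) holds, $\cR$ is the minimal realization of $\fR$ at $\ul{Y}$, so (3) is precisely Theorem \ref{thm:3Jun19a}(2). Conversely, to derive (1) from (3) I will use the identity
\begin{equation*}
\Lambda_{\ul{\bA},\ul{Y}}(I_s \otimes \ul{X}) = I_s \otimes \Lambda_{\ul{\bA},\ul{Y}}(\ul{X}), \quad \ul{X} \in (\KK^{sm \times sm})^d,
\end{equation*}
which follows from $I_{sm} \otimes Y_k = I_s \otimes (I_m \otimes Y_k)$ together with the compatibility $(I_s \otimes A)\bA_k = I_s \otimes (A)\bA_k$ for $A \in \KK^{sm \times sm}$ (an index-chasing check using the block convention (\ref{eq:26May20a})). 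A parallel identity $\cR(I_s \otimes \ul{X}) = I_s \otimes \cR(\ul{X})$ holds on the common domain. Consequently $I_s \otimes \ul{X} \in \Omega_{s^2 m}(\cR)$ iff $\ul{X} \in \Omega_{sm}(\cR)$; substituting into (3) with $n = sm$ gives $dom_{sm}(\fR) = \Omega_{sm}(\cR)$ and $\fR(\ul{X}) = \cR(\ul{X})$ there. Taking $n = s$ and $\ul{X} = \ul{Y}$ (where $\Lambda_{\ul{\bA},\ul{Y}}(I_s \otimes \ul{Y}) = I_{Ls}$) shows $\ul{Y} \in dom_s(\fR)$, completing (1).

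The only step requiring a moment of care is the tensor-product identity in (3) $\Rightarrow$ (1): the notational convention (\ref{eq:26May20a}) extends $\bA_k$ block-wise, and one must verify that this is compatible with the Kronecker factor $I_s$ applied on the outside. A brief check of the block structure (the super-blocks of $I_s \otimes A$ in the $sm \times sm$-by-$s \times s$ partition are block-diagonal copies of $A$'s own $s \times s$ blocks) makes this transparent. Everything else is direct invocation of the theorems of Sections \ref{sec:la}--\ref{sec:mainR}, and the proof itself should be quite short.
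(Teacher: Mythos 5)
Your directions (1)$\Rightarrow$(2) and (1)$\Leftrightarrow$(3) are fine and essentially coincide with the paper's proof (in (3)$\Rightarrow$(1) your direct substitution using $\Lambda_{\ul{\bA},\ul{Y}}(I_s\otimes\ul{X})=I_s\otimes\Lambda_{\ul{\bA},\ul{Y}}(\ul{X})$ even yields the domain equality a bit more directly than the paper, which only establishes $dom_{sm}(\fR)\subseteq\Omega_{sm}(\cR)$ and then re-invokes Theorem \ref{thm:3Jun19a}). The genuine gap is in (2)$\Rightarrow$(1). After producing $f\in\KK\plangle\ul{x}\prangle$ from Theorem \ref{thm:26Jun19a}, you assert that ``$\cR$ is a realization of $\fR:=f$ centred at $\ul{Y}$'' and then apply Theorem \ref{thm:minObsCont} and Theorem \ref{thm:3Jun19a}(1). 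But being a realization of $f$ in the sense of Definition \ref{def:25Sep18a} requires the inclusion $dom_{sm}(f)\subseteq\Omega_{sm}(\cR)$ together with agreement of values on $dom_{sm}(f)$, whereas Theorem \ref{thm:26Jun19a} gives you exactly the \emph{opposite} inclusion $\Omega_{sm}(\cR)\subseteq dom_{sm}(f)$. Both Theorem \ref{thm:minObsCont} (minimal $\Leftrightarrow$ controllable and observable) and Theorem \ref{thm:3Jun19a} carry the hypothesis that $\cR$ is a realization of the function in question, so invoking them at this point assumes essentially what is left to prove; controllability and observability alone do not make $\cR$ ``minimal'' in the paper's sense until that domain containment is available.

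The paper closes exactly this step by a comparison argument you are missing: since $\ul{Y}\in dom_s(f)$, Corollary \ref{cor:ExistAndUnique} provides a bona fide minimal realization $\wt{\cR}$ of $f$ centred at $\ul{Y}$, and Theorem \ref{thm:3Jun19a} applied to $\wt{\cR}$ gives $dom_{sm}(f)=\Omega_{sm}(\wt{\cR})$ with $f=\wt{\cR}$ there. Combining with Theorem \ref{thm:26Jun19a}, the two nc functions $\cR$ and $\wt{\cR}$ agree on $\Omega_{sm}(\cR)\supseteq Nilp_{sm}(\ul{Y})$, hence have the same Taylor--Taylor coefficients at $\ul{Y}$; since both are controllable and observable, the uniqueness-of-minimal-realization argument of \cite[Theorem 2.13]{PV1} shows they are similar, whence $\Omega_{sm}(\cR)=\Omega_{sm}(\wt{\cR})=dom_{sm}(f)$, which is the equality required in (1). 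Without this comparison (or some other argument showing that $\Lambda_{\ul{\bA},\ul{Y}}(\ul{X})$ is invertible for every $\ul{X}\in dom_{sm}(f)$), your proof of (2)$\Rightarrow$(1) does not go through.
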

\begin{proof}
\uline{\textbf{1$\Longrightarrow$2:}}
This is an immediate corollary of Lemma
\ref{lem:13Dec17a}, as $\fR$
admits the realization $\cR$.  
\\\textbf{\uline{2$\Longrightarrow$1:}}
Suppose that the coefficients of $\cR$ satisfy the 
$\cL-\cL\cA$ 
conditions. 
By applying Theorem
\ref{thm:26Jun19a}, 
there exists 
$f\in\KK\plangle\ul{x}\prangle$
such that
\begin{align}
\label{eq:5May20a}
\Omega_{sm}(\cR)\subseteq dom_{sm}(f) 
\text{ and }
\cR(\ul{X})=f(\ul{X}),\,
\forall\ul{X}\in\Omega_{sm}(\cR)
,m\in\NN.
\end{align}
In particular we know that $\ul{Y}\in dom_s(f)$,
thus from Corollary 
\ref{cor:ExistAndUnique}
and Theorem 
\ref{thm:3Jun19a},
there exists a minimal nc Fornasini--Marchesini realization 
$\wt{\cR}$ of $f$ that is centred at
$\ul{Y}$, 
satisfying 
\begin{align}
\label{eq:5May20b}
dom_{sm}(f)=\Omega_{sm}(\wt{\cR})
\text{ and }
f(\ul{X})=\wt{\cR}(\ul{X}),\, 
\forall\ul{X}\in dom_{sm}(f),
m\in\NN.
\end{align}
From 
$(\ref{eq:5May20a})$ and 
$(\ref{eq:5May20b})$, it follows that
\begin{align*}
\Omega_{sm}(\cR)\subseteq\Omega_{sm}(\wt{\cR})
\text{ and }\cR(\ul{X})=\wt{\cR}(\ul{X}),\,
\forall\ul{X}\in\Omega_{sm}(\cR),
m\in\NN.
\end{align*} 
Since 
$Nilp_{sm}(\ul{Y})\subseteq \Omega_{sm}(\cR)$ for every
$m\in\NN$, we obtain that
\begin{align*}
\cR(\ul{X})
=\wt{\cR}(\ul{X}),\,
\forall\ul{X}\in Nilp_{sm}(\ul{Y}),
m\in\NN
\end{align*} 
and as both 
$\cR$
and
$\wt{\cR}$
are minimal nc Fornasini--Marchesini realizations centred at 
$\ul{Y}$, we can use the arguments which appear in 
the proof of \cite[Theorem 2.13]{PV1}
(which is based on the fact that the two realizations give
the same value on the nilpotent ball around 
$\ul{Y}$ and the uniqueness of
the Taylor--Taylor coefficients of their power series expansions around $\ul{Y}$),
to deduce that 
$\cR$
and
$\wt{\cR}$
are similar and hence
$\Omega_{sm}(\cR)
=\Omega_{sm}(\wt{\cR})$
for every 
$m\in\NN$.
In conclusion, 
we get that 
$dom_{sm}(f)
=\Omega_{sm}(\cR)$ 
and 
$f(\ul{X})=\cR(\ul{X})$ 
for every 
$\ul{X}\in dom_{sm}(f)$ 
and 
$m\in\NN$, 
as needed.
\\\textbf{\uline{(1)$\Longrightarrow$(3):}}
It follows from Theorem
\ref{thm:3Jun19a}, 
as 
$\fR$
admits the minimal realization
$\cR$.
\\\textbf{\uline{(3)$\Longrightarrow$(1)}}
Let 
$n=sm$ 
and let 
$\ul{X}\in 
dom_{sm}(\fR)$, 
then 
$I_s\otimes \ul{X}\in 
\Omega_{s^2m}(\cR)$ 
and 
$I_s\otimes \fR(\ul{X})
=\cR(I_s\otimes \ul{X})$.
As
$(I_s\otimes X_k-
I_{sm}\otimes Y_k)\bA_k
=I_s\otimes 
\big((X_k-I_m\otimes Y_k)
\bA_k\big)$, we have
\begin{align*}
\Lambda_{\ul{\bA},\ul{Y}}
(I_s\otimes\ul{X})=
I_s\otimes 
\Big( I_{Lm}
-\sum_{k=1}^d
(X_k-I_m\otimes Y_k)
\bA_k 
\Big)=
I_s\otimes 
\Lambda_{\ul{\bA},\ul{Y}}
(\ul{X})
\end{align*}
and thus
\begin{multline*}
I_s\otimes\ul{X}\in \Omega_{s^2m}(\cR)
\Longrightarrow 
\det
\big(
\Lambda_{\ul{\bA},\ul{Y}}
(I_s\otimes\ul{X})
\big)\ne0
\\ \Longrightarrow 
\det
\big(
\Lambda_{\ul{\bA},\ul{Y}}
(\ul{X})\big)
\ne0
\Longrightarrow 
\ul{X}\in\Omega_{sm}(\cR).
\end{multline*}
As $\cR$ 
respects direct sums (cf. Theorem 
\ref{thm:19Oct17a}),  
$I_s\otimes\fR(\ul{X})=\cR(I_s\otimes\ul{X})=
I_s\otimes \cR(\ul{X})$
and hence
$\fR(\ul{X})=\cR(\ul{X})$.
We showed that for every $\ul{X}\in dom_{sm}(\fR)$, we have 
$\ul{X}\in\Omega_{sm}(\cR)$ and that $\fR(\ul{X})=\cR(\ul{X})$,
i.e., we showed that 
$\fR$
admits the minimal nc Fornasini--Marchesini realization 
$\cR$ and thus Theorem 
\ref{thm:3Jun19a} 
implies 
the assertion in 
\textbf{1}.
\end{proof}
Using our theory of minimal nc Fornasini--Marchesini realizations of nc rational
functions, 
we now provide a short proof for the fact that the domain of regularity of
a nc rational function is
an upper admissible, similarity invariant nc set. 
Notice that proving the upper admissibility is 
a highly non trivial thing to do, however with our methods the proof becomes
easy.
\begin{cor}
\label{cor:12May20a}
If 
$\fR\in\KK\plangle\ul{x}\prangle$, 
then its domain of regularity is an upper admissible, similarity invariant
nc subset of 
 $(\KK^d)_{nc}$.
\end{cor}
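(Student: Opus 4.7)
The plan is to leverage the minimal nc Fornasini--Marchesini realization machinery developed in the paper to transfer the three desired properties from the invertibility set of such a realization to the domain of $\fR$ itself. The key point is that $dom(\fR)$ is defined as a union over representatives, so none of the three properties are automatic from the corresponding facts for individual nc rational expressions; instead, one needs a uniform handle on $dom(\fR)$, which is exactly what a realization provides.

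Since $\fR$ is by definition an equivalence class of \emph{non-degenerate} expressions, $dom(\fR) \ne \emptyset$; fix some $s \in \NN$ and a point $\ul{Y} \in dom_s(\fR)$. By Corollary \ref{cor:ExistAndUnique}, $\fR$ admits a minimal nc Fornasini--Marchesini realization $\cR$ centred at $\ul{Y}$, whose coefficients automatically satisfy the $\cL-\cL\cA$ conditions by Lemma \ref{lem:13Dec17a}. Theorem \ref{thm:19Oct17a} and Theorem \ref{thm:6Aug19a} then assert that $\Omega(\cR)$ is an upper admissible, similarity invariant nc subset of $(\KK^d)_{nc}$. The decisive input is Theorem \ref{thm:3Jun19a}: on levels $n = sm$ one has the exact identity $dom_{sm}(\fR) = \Omega_{sm}(\cR)$, while on arbitrary levels $n \in \NN$ one has the characterization
\[
dom_n(\fR) = \{\ul{X} \in (\KK^{\ntn})^d : I_s \otimes \ul{X} \in \Omega_{sn}(\cR)\}.
\]

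From here each property transfers by a direct argument. Similarity invariance is immediate: if $\ul{X} \in dom_n(\fR)$ and $T \in \KK^{\ntn}$ is invertible, then $I_s \otimes (T \cdot \ul{X} \cdot T^{-1}) = (I_s \otimes T)(I_s \otimes \ul{X})(I_s \otimes T)^{-1}$ lies in $\Omega_{sn}(\cR)$ by similarity invariance of the latter, so $T \cdot \ul{X} \cdot T^{-1} \in dom_n(\fR)$. For the nc set and upper admissibility properties, given $\ul{X} \in dom_n(\fR)$, $\ul{X}' \in dom_{n'}(\fR)$ and $\ul{Z} \in (\KK^{n \times n'})^d$, the key observation is that an appropriate block-shuffle permutation --- obtained iteratively from the commutation identity (\ref{eq:24Jan19a}) --- conjugates $I_s \otimes \begin{pmatrix} \ul{X} & \ul{Z} \\ 0 & \ul{X}' \end{pmatrix}$ into a matrix of the form $\begin{pmatrix} I_s \otimes \ul{X} & \ul{W} \\ 0 & I_s \otimes \ul{X}' \end{pmatrix}$ with $\ul{W}$ explicitly determined by $\ul{Z}$. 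Setting $\ul{Z} = 0$ (so $\ul{W} = 0$) and using that $\Omega(\cR)$ is a nc set gives the nc set property of $dom(\fR)$; allowing general $\ul{Z}$ and invoking upper admissibility of $\Omega(\cR)$ (with a suitable scalar $c \in \KK$) together with similarity invariance to pull the result back through the permutation yields upper admissibility of $dom(\fR)$.

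The only real obstacle is the bookkeeping of the shuffle permutation in the last step --- verifying precisely which permutation interchanges the two natural block decompositions of the $I_s \otimes$-inflated matrix --- but this is purely an application of (\ref{eq:24Jan19a}). Everything else reduces immediately to the three previously established facts about $\Omega(\cR)$ and the identification provided by Theorem \ref{thm:3Jun19a}.
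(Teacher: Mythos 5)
Your proposal is correct and follows essentially the same route as the paper: fix $\ul{Y}\in dom_s(\fR)$, take a minimal realization $\cR$, use Theorems \ref{thm:19Oct17a} and \ref{thm:6Aug19a} for the nc-set/upper-admissibility and similarity invariance of $\Omega(\cR)$, and transfer these to $dom(\fR)$ via the identification in Theorem \ref{thm:3Jun19a}. The only cosmetic difference is in the upper-admissibility step: the paper first conjugates the diagonal blocks to $\ul{X}\otimes I_s$ and $\ul{X}'\otimes I_s$ so that the triangular matrix is literally $\begin{bmatrix}\ul{X}&\ul{Z}\\ 0&\ul{X}'\end{bmatrix}\otimes I_s$ (and in fact gets $c=1$), whereas you apply the shuffle permutation of (\ref{eq:24Jan19a}) at the end — both are the same argument.
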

\begin{proof}
Fix a point
$\ul{Y}\in dom_s(\fR)$
and a minimal nc Fornasini--Marchesini realization 
$\cR$ 
of
$\fR$, 
that is centred at 
$\ul{Y}$; 
by using Corollary
\ref{cor:ExistAndUnique} 
for instance. 
 
\textbf{\uline{$dom(\fR)$ 
is similarity invariant:}}
Let
$\ul{X}\in dom_n(\fR)$ 
and 
$T\in\KK^{\ntn}$ be invertible.
It follows from Theorem
\ref{thm:3Jun19a} that
$I_s\otimes \ul{X}\in\Omega_{sn}(\cR)$, 
while 
from (the similarity invariance of $\Omega(\cR)$, proved in) Theorem
\ref{thm:6Aug19a} 
it follows that 
$$I_s\otimes(T^{-1}\cdot\ul{X}\cdot T)
=(I_s\otimes T)^{-1}\cdot
(I_s\otimes \ul{X})\cdot
(I_s\otimes T)\in \Omega_{sn}(\cR).$$ 
By using Theorem 
\ref{thm:3Jun19a} 
again, we get  that 
$T^{-1}\cdot\ul{X}\cdot T\in dom_n(\fR)$.

\underline{\textbf{$dom(\fR)$ 
is upper admissible:}}
Let
$\ul{X}\in dom_n(\fR),\,
\ul{X}^\prime\in dom_m(\fR)$, 
and 
$\ul{Z}\in (\KK^{\ntm})^d$.
Theorem
\ref{thm:3Jun19a}
implies that
$I_s\otimes\ul{X}
\in \Omega_{sn}(\cR)$
and
$I_s\otimes\ul{X}^\prime\in\Omega_{sm}(\cR)$,
while
from (the similarity invariance of $\Omega(\cR)$, proved in) Theorem
\ref{thm:6Aug19a} we have  
$\ul{X}\otimes I_s\in\Omega_{sn}(\cR)$
and
$\ul{X}^\prime\otimes I_s\in\Omega_{sm}(\cR)$.
Next, from (the upper admissibility of 
$\Omega(\cR)$, proved in) 
Theorem
\ref{thm:19Oct17a}, we get 
$$\begin{bmatrix}
\ul{X}&\ul{Z}\\
0&\ul{X}^\prime\\
\end{bmatrix}\otimes I_s=
\begin{bmatrix}
\ul{X}\otimes I_s&\ul{Z}\otimes I_s\\
0&\ul{X}^\prime\otimes I_s\\
\end{bmatrix}\in\Omega_{s(n+m)}(\cR)
$$
and thus the similarity invariance of 
$\Omega(\cR)$ 
implies that
$I_s\otimes \begin{bmatrix}
\ul{X}&\ul{Z}\\
0&\ul{X}^\prime\\
\end{bmatrix}\in\Omega_{s(n+m)}(\cR)$.
Finally, Proposition
\ref{prop:4Jul19a}
and Theorem 
\ref{prop:4Jul19a} 
imply that 
$\begin{bmatrix}
\ul{X}&\ul{Z}\\
0&\ul{X}^\prime\\
\end{bmatrix}\in dom_{n+m}(\fR)$.
\end{proof}
\subsection{Evaluations over algebras}
\label{subsec:MainAlg}
For the sake of completion, one would like to get a similar result to
Theorem
\ref{thm:3Jun19a}
for evaluations w.r.t stably finite algebras. 
To do so, we must introduce the following definition of a matrix domain of
a nc rational function w.r.t an algebra.
For every 
$\fR\in\KK\plangle\ul{x}\prangle$,  define the matrix domain of the function
$\fR$ w.r.t an algebra $\cA$, 
also called the\textbf{ matrix 
$\cA-$domain} of $\fR$, by
\begin{multline}
\label{eq:5Nov19b}
dom_{\cA}^{Mat}(\fR):=
\big\{
\ul{\fa}\in\cA^d: \ul{\fa}\in dom_{\cA}^{Mat}(R)
\text{ for some $1\times1$ 
matrix valued}\\
\text{ nc rational expression $R$ which represents
$\fR$}\big\}.
\end{multline}
Here $dom_{\cA}^{Mat}(R)$ 
is defined  just as in 
\cite[Definition 1.1]{PV1},
using the idea of synthesis, with the only difference
that the synthesis may involve  matrix valued nc rational expressions; e.g.
$R(x_1,x_2)=
\begin{pmatrix}
1&0\\
\end{pmatrix}
\begin{pmatrix}
1&x_1\\
x_2&1\\
\end{pmatrix}^{-1} 
\begin{pmatrix}
x_1\\x_2\\
\end{pmatrix}$ 
with
$dom_{\cA}^{Mat}(R)
=\Big\{(\fa_1,\fa_2)\in\cA^2:
\begin{pmatrix}
1_{\cA}&\fa_1\\
\fa_2&1_{\cA}\\
\end{pmatrix}
\text{ is invertible in }\cA^{2\times2} 
\Big\}$.
As every nc rational expression is a $1\times1$
matrix valued nc rational expression, we automatically have 
$$dom_{\cA}(\fR)\subseteq dom_{\cA}^{Mat}(\fR).$$ 
For more details on matrix 
valued nc rational 
expressions and functions,
see 
\cite{KV4}.
\begin{definition}
Let $\cA$ be a unital $\KK-$algebra.
We say that $\cA$ \textbf{has property} $\cI$ 
(here $\cI$ 
stands for inversion), if
for every 
$n\in\NN$ and
$\fA=(\fa_{ij})_{1\le i,j\le n}\in\cA^{\ntn}$
that is invertible in
$\cA^{\ntn}$, there exist nc rational expressions 
$R_{ij}$
(for 
$1\le i,j\le n$) in 
$n^2$ 
nc variables 
$\{x_{ij}:1\le i,j\le n\}$, 
such that 
$$\ul{\fa}:=(\fa_{11},\ldots,\fa_{nn})\in dom_{\cA}(R_{ij})
\text{ and }
(\fA^{-1})_{i,j}=R_{ij}^{\cA}(\ul{\fa}),$$
for every 
$1\le i,j\le n$.
\end{definition}
We know certain families of algebras which satisfy Property 
$\cI$, those are when 
$\cA$ 
is either commutative, a skew field or a matrix algebra $\cA=\KK^{\ntn}$
for some $n\in\NN$.
The first case is true due to 
using determinants and an
analog of the Cramer's rule.
The second case can be treated by 
using quasideterminants 
(as developed and discussed in
\cite{GGRW,GR1}), 
or by using Schur complements 
(as in the proof of 
\cite[Lemma 3.9]{V1}
with a smart way of 
choosing any non-zero element of the matrix as a pivot). 
The third case follows directly from
\cite[Lemma 3.9]{V1}.

It is easily seen that once $\cA$
satisfies Property $\cI$, we have 
\begin{align}
\label{eq:19Aug20a}
dom_{\cA}(\fR)=dom_{\cA}^{Mat}(\fR)
\end{align}
for every nc rational function $\fR$. 
Moreover, we get the precise description of 
all algebras with Property 
$\cI$, by 
using the domain and the matrix domain 
of matrix valued nc rational functions
w.r.t $\cA$:
\begin{proposition}
\label{prop:11Sep20a}
Let $\cA$ be a unital $\KK-$algebra. Then $\cA$ has Property $\cI$ if and only if
$dom_{\cA}(\fr)=dom^{Mat}_{\cA}(\fr)$ 
for every matrix valued nc rational function 
$\fr$.
\end{proposition}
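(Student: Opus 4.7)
The plan is to prove the two implications separately: the backward direction first, as it is more direct, and the forward direction by structural induction on matrix-valued expressions. For the backward implication, I would fix an invertible $\fA = (\fa_{ij}) \in \cA^{\ntn}$ and form the $n^2$ nc variables $\{x_{ij}\}$, the matrix-valued nc polynomial $X(\ul{x}) = (x_{ij})_{i,j}$, and the matrix-valued nc rational function $\fr = X^{-1}$. Set $\ul{\fa} := (\fa_{11},\ldots,\fa_{nn})$. Since the matrix-valued expression $X^{-1}$ represents $\fr$ and its matrix synthesis at $\ul{\fa}$ requires only inverting $X^{\cA}(\ul{\fa}) = \fA$, we have $\ul{\fa} \in dom_{\cA}^{Mat}(\fr)$. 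The hypothesis then gives $\ul{\fa} \in dom_{\cA}(\fr) = \bigcap_{i,j} dom_{\cA}(\fr_{ij})$, where $\fr_{ij} = (X^{-1})_{ij}$ is a scalar nc rational function. Hence for each $(i,j)$ there is a scalar nc rational expression $R_{ij}$ representing $\fr_{ij}$ with $\ul{\fa} \in dom_{\cA}(R_{ij})$, and necessarily $R_{ij}^{\cA}(\ul{\fa}) = \fr_{ij}^{\cA}(\ul{\fa}) = (\fA^{-1})_{ij}$. These expressions verify Property $\cI$.

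For the forward implication, the inclusion $dom_{\cA}(\fr) \subseteq dom_{\cA}^{Mat}(\fr)$ is automatic. The reverse inclusion reduces entry-wise to the scalar statement: $dom_{\cA}^{Mat}(\fR) \subseteq dom_{\cA}(\fR)$ for every scalar $\fR \in \KK\plangle \ul{x} \prangle$. Given $\ul{\fa} \in dom_{\cA}^{Mat}(\fR)$, fix a $1 \times 1$ matrix-valued nc rational expression $R$ representing $\fR$ with $\ul{\fa} \in dom_{\cA}^{Mat}(R)$. I would then prove by structural induction on any matrix-valued sub-expression $P$ of $R$, of size $n \times m$ and representing a matrix-valued function $\fp$, the following inductive claim: every scalar entry $\fp_{ij}$ admits a scalar nc rational expression $P_{ij}$ with $\ul{\fa} \in dom_{\cA}(P_{ij})$ and $P_{ij}^{\cA}(\ul{\fa}) = (P^{\cA}(\ul{\fa}))_{ij}$. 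Constants, variables, sums, and products are immediate. In the critical inverse step $P = P_1^{-1}$ with $P_1$ of size $\ntn$, the induction hypothesis provides scalar expressions $P_{1,k\ell}$ for the entries of $P_1$, so $\fA := P_1^{\cA}(\ul{\fa})$ is invertible in $\cA^{\ntn}$; Property $\cI$ then supplies scalar nc rational expressions $S_{ij}$ in $n^2$ variables with $(\fa_{11},\ldots,\fa_{nn}) \in dom_{\cA}(S_{ij})$ and $(\fA^{-1})_{ij} = S_{ij}^{\cA}(\ul{\fa})$, and I would set $P_{ij}(\ul{x}) := S_{ij}\bigl(P_{1,11}(\ul{x}),\ldots,P_{1,nn}(\ul{x})\bigr)$.

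The main obstacle is ensuring in this inverse step that $P_{ij}$ actually represents the scalar nc rational function $\fp_{ij} = (\fp_1^{-1})_{ij}$, and not merely produces the correct $\cA$-value at $\ul{\fa}$. I would overcome this by choosing the auxiliary expressions $S_{ij}$ from Property $\cI$ to be universal representatives of $(Y^{-1})_{ij}$ inside the free skew field $\KK\plangle y_{k\ell} \prangle$; such universal representatives exist and can always be produced by applying Lemma \ref{lem:25Jun19a} inside the free skew field, and in the three explicit classes of algebras known to satisfy Property $\cI$ they are precisely the expressions arising from Cramer-type rules, quasideterminants, or Lemma \ref{lem:25Jun19a} itself. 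With this strengthening in place the composed expression $P_{ij}$ is a bona fide representation of $\fp_{ij}$ with $\ul{\fa} \in dom_{\cA}(P_{ij})$, the induction closes, and taking the $1 \times 1$ output of $R$ yields $\ul{\fa} \in dom_{\cA}(\fR)$, which is the desired reverse inclusion.
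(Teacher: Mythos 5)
Your backward implication is the paper's own argument, essentially verbatim: you invert the generic $n\times n$ matrix of free variables to obtain the matrix-valued nc rational function $\fr$, note that $\ul{\fa}\in dom^{Mat}_{\cA}(\fr)$ because the only inversion the matrix synthesis requires is that of $\fA$ itself, and then use the hypothesis to pass to scalar expressions $R_{ij}$ with the correct $\cA$-values. That half is fine.

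The forward implication is where your proposal breaks down. The paper disposes of this direction in one sentence ("it follows from the definition of the matrix $\cA$-domain"), and your structural induction is the natural way to flesh it out; you are also right that the delicate point is the inverse step, where one needs scalar expressions that not only take the correct values at $\ul{\fa}$ but also \emph{represent} the correct entry functions, since membership in $dom_{\cA}(\fR)$ requires a representative of $\fR$ itself. But the fix you invoke is not available under the stated hypothesis. Property $\cI$, as defined, only guarantees that for the given invertible $\fA$ there exist expressions $S_{ij}$ in the $n^2$ entry-variables whose $\cA$-evaluations at the entries of $\fA$ equal the entries of $\fA^{-1}$; it imposes no constraint on which nc rational functions the $S_{ij}$ represent, and the $S_{ij}$ may depend on $\fA$. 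Requiring them to be representatives of $(Y^{-1})_{ij}$ in the free skew field \emph{and} to have the entries of $\fA$ in their $\cA$-domain is a strictly stronger property; it cannot be obtained "by applying Lemma \ref{lem:25Jun19a} inside the free skew field", since that lemma produces representatives whose domains contain a prescribed \emph{matrix} point, not a prescribed point of an abstract algebra $\cA$. Your supporting claim about the three known classes is also false in the commutative case: the Cramer-rule expressions that verify Property $\cI$ there certainly do not represent the entries of the inverse of a generic noncommutative matrix. So, for an arbitrary unital $\KK$-algebra satisfying Property $\cI$, the inverse step of your induction is unjustified as written — in effect you have silently replaced Property $\cI$ by a stronger hypothesis. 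You have put your finger on a genuine subtlety that the paper's terse proof does not spell out, but your proposal does not close it.
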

Here the $\cA-$domain and the matrix 
$\cA-$domain, respectively, of a matrix valued nc rational function
$\fr=(\fr_{ij})$, 
are defined as the intersection of all the 
$\cA-$domains and matrix $\cA-$domains, respectively, of the nc rational functions $\fr_{ij}$.
\begin{proof}
If 
$\cA$ 
has Property 
$\cI$, 
then it follows from the 
definition of the matrix 
$\cA-$domain that for every matrix valued nc rational function 
$\fr$ 
we have 
$dom_{\cA}(\fr)
=dom^{Mat}_{\cA}(\fr)$.

On the other hand, 
suppose it is true that
$dom_{\cA}(\fr)=dom_{\cA}^{Mat}(\fr)$ for every 
matrix valued nc rational function $\fr$ 
and let 
$\fA=
(\fa_{ij})_{1\le i,j\le n}\in\cA^{\ntn}$ 
be invertible in 
$\cA^{\ntn}$. 
Consider the 
$\ntn$ 
matrix valued nc rational function
$$\fr(x_{11},\ldots,x_{nn})=
\big(\fr_{ij}(x_{11},\ldots,x_{nn})\big)_{1\le i,j\le n}=
\begin{pmatrix}
x_{11}&\ldots&x_{1n}\\
\vdots&&\vdots\\
x_{n1}&\ldots&x_{nn}\\
\end{pmatrix}^{-1},$$
which is invertible as the matrix $(x_{ij})_{1\le i,j\le n}$ is invertible over the free algebra
$\KK\langle x_{11},\ldots,x_{nn} \rangle$.
As 
$\fA$ 
is invertible, we know that
$\ul{\fa}=(\fa_{11},\ldots,\fa_{nn})\in dom^{Mat}_{\cA}(\fr)$, so by the assumption we get 
$\ul{\fa}\in dom_{\cA}(\fr)$,
which implies that 
$\ul{\fa}\in dom_{\cA}(\fr_{ij})$ 
and 
$\big(\fr^{\cA}(\ul{\fa})\big)_{i,j}=
\fr_{ij}^{\cA}(\ul{\fa})$ 
for every 
$1\le i,j\le n$. 
Thus there exist nc rational expressions $R_{ij}$ such that 
$\ul{\fa}\in dom_{\cA}(R_{ij})$ and
$\fr^{\cA}_{ij}(\ul{\fa})=R_{ij}^{\cA}(\ul{\fa})$ for every $1\le i,j\le n$, hence 
$$(\fA^{-1})_{i,j}=
\big(\fr^{\cA}(\ul{\fa})\big)_{i,j}=\fr_{ij}^{\cA}(\ul{\fa})=R_{ij}^{\cA}(\ul{\fa}).$$
\end{proof}
It is not true that every stably finite algebra satisfies Property
$\cI$, 
as the following example --- kindly provided by 
I. Klep and J. Vol\v{c}i\v{c} ---
shows. 
\begin{example}
\label{ex:10Nov19a}
Let
$\fX=
\begin{pmatrix}
x_{11}&x_{12}\\
x_{21}&x_{22}\\
\end{pmatrix},
\mathfrak{Z}=
\begin{pmatrix}
z_{11}&z_{12}\\
z_{21}&z_{22}\\
\end{pmatrix},$
and define 
\begin{align}
\cA:=\faktor{\KK\langle 
\ul{x},\ul{z}
\rangle}{\cJ}=
\Big\{ p+\cJ: p\in\KK\langle\ul{x},
\ul{z}\rangle\Big\},
\end{align}
where 
$\ul{x}=(x_{11},x_{12},x_{21},x_{22}),\,
\ul{z}=(z_{11},z_{12},z_{21},z_{22})$, and
$\cJ$
is the two sided ideal in 
$\KK\langle\ul{x},
\ul{z}\rangle$
generated by the $8$ equations obtained from
$\fX\fZ=\fZ\fX=I_2$. 
For convenience
denote the $8$ equations by 
$h_j(\ul{x},\ul{z})=0$, 
for 
$1\le j\le 8$, 
hence
$\cJ=\langle h_j:1\le j\le 8\rangle$.

It is known that 
$\cA$
is a free ideal ring (e.g., see
\cite[Theorem 5.3.9]{Co95} and 
\cite[Theorem 6.1]{Ber74}) 
and hence \textbf{\uline{$\cA$
is stably finite}},
as it can be embedded into a 
skew field 
which is trivially stably finite.

The natural mapping 
$\varphi:\KK\langle
\ul{x}\rangle\rightarrow
\cA$
given by
$\varphi(p)=p+\cJ$
for every 
$p\in\KK\langle\ul{x}\rangle$,
is \uline{an 
\textbf{embedding} from $\KK\langle
\ul{x}\rangle$ into $\cA$, i.e., 
$\ker(\varphi)=\{0_{\cA}\}$}:
Let 
$f\in\KK\langle\ul{x}\rangle$ such that
$\varphi(f)=0_{\cA}$, 
then 
\begin{align}
\label{eq:1Sep20a}
f(\ul{x})=\sum_{i=1}^m
p_i(\ul{x},\ul{z})h_{k_i}(\ul{x},\ul{z})
q_i(\ul{x},\ul{z})
\end{align}
for some 
$m\in\NN,\,
p_i,q_i\in
\KK\langle\ul{x},\ul{z}\rangle$
and $k_i\in \{1,\ldots,8\}$
for any
$1\le i\le m$. 
Therefore for every
$2\times 2$ 
block matrix
$X=
\big[X_{ij}\big]_{1\le i,j\le 2}
$ invertible over 
$\KK$, 
we let 
$X^{-1}=\big[Z_{ij}\big]_{1\le i,j\le 2}
$, and obtain that
$$
f(\ul{X})=\sum_{i=1}^m
p_i(\ul{X},\ul{Z})h_{k_i}(\ul{X},\ul{Z})
q_i(\ul{X},\ul{Z})=0,
$$
where 
$\ul{X}=(X_{11},X_{12},X_{21},X_{22})$
and
$\ul{Z}=(Z_{11},Z_{12},Z_{21},Z_{22})$.
As 
$f$ 
vanishes on a set of matrices that is Zariski dense, 
it follows that 
$f$ 
vanishes on all matrices and therefore 
$f=0_{\KK\langle\ul{x}\rangle}$. 

\uline{If
$f\in\KK\langle\ul{x}\rangle$
is non-constant, then $\varphi(f)$ is not invertible in $\cA$}:
Suppose 
$f\in\KK\langle\ul{x}\rangle$ is non-constant and that $\varphi(f)$
is invertible in
$\cA$. Thus there exist 
$g\in\KK\langle\ul{x},\ul{z}\rangle$
such that 
$(g+\cJ)\varphi(f)
=\bf{1}_{\cA}$
which implies that 
$fg-1\in\cJ.$
Therefore, similarly to
(\ref{eq:1Sep20a}), 
we get that
whenever the matrix 
$[X_{ij}]_{1\le i,j\le 2}$ is invertible over $\KK$,  
so is the matrix $f(\ul{X})$.
However, this is a contradiction to a claim proved in
\cite[page 79]{V1}, which is the following:
For every non-constant 
$f\in\KK\langle\ul{x}\rangle$, there exists a tuple of matrices 
$\ul{X}=(X_{11},X_{12},X_{21},X_{22})$ for which 
the matrix
$[X_{ij}]_{1\le i,j\le 2}$
is invertible over $\KK$, but $f(\ul{X})$ 
is not invertible over 
$\KK$.

Finally, the algebra $\cA$ 
\uline{\textbf{does not satisfy Property }
$\cI$}:
First, it is obvious that 
$$\fA:=\varphi(\fX)=
\big(\varphi(x_{ij})\big)_{1\le i,j\le 2}
=
\big( x_{ij}+\cJ\big)_{1\le i,j\le 2}
$$is invertible in
$\cA^{2\times2}$, with its inverse being equal
$\big(z_{ij}+\cJ\big)_{1\le i,j\le 2}
$. Now, suppose  
$\cA$
satisfies Property 
$\cI$, therefore there exist 
nc rational expressions 
$R_{ij}$ 
such that
$\ul{\fa}\in dom_{\cA}(R_{ij})$ and 
$R_{ij}^{\cA}(\ul{\fa})
=(\fA^{-1})_{ij}$
for every 
$1\le i,j\le 2$, where 
$\ul{\fa}=(\fa_{11},\fa_{12},\fa_{21},\fa_{22})\in\cA^4$
is given by 
$\fa_{ij}= x_{ij}+\cJ$.
If the expression
$R_{ij}$ contains at least one inversion, then let us take the innermost nested inverse, say 
$\wt{R}^{-1}$ with non-constant nc polynomial $\wt{R}$, 
which appears in $R_{ij}$, 
so we must have 
$\ul{\fa}\in dom_{\cA}(\wt{R}^{-1})$.
Thus
$\wt{R}^{\cA}(\ul{\fa})$
is invertible in $\cA$, 
while a simple calculation shows that 
$\wt{R}^{\cA}(\ul{\fa})
=\wt{R}^{\cA}(\ul{x}+\cJ)
=\wt{R}(\ul{x})+\cJ
=\varphi(\wt{R}),$
hence $\varphi(\wt{R})$ is invertible in $\cA$ and $\wt{R}\in \KK\langle\ul{x}\rangle$ is non-constant, which is a contradiction .
Therefore the \textbf{expressions $R_{ij}$
must be polynomials}, 
so we get that
$(\fA^{-1})_{ij}=R_{ij}^{\cA}(\ul{\fa})=\varphi(R_{ij})$ 
which implies that 
$$
\boldsymbol{1}_{\cA^{2\times2}}=\fA \fA^{-1}=
\big(\varphi(x_{ij})\big)_{1\le i,j\le 2}\cdot
\big(\varphi(R_{ij})\big)_{1\le i,j\le 2},$$
but as 
$\varphi$ is an an embedding, we must have 
$I_2=
\big( x_{ij}\big)_{1\le i,j\le 2}\cdot
\big( R_{ij}\big)_{1\le i,j\le 2},$
which is a contradiction, 
since the matrix
$\big(x_{ij}\big)_{1\le i,j\le 2}
$ is not invertible over 
$\KK\langle\ul{x}\rangle$. 
\end{example}
In the next theorem we realize (up to a tensor product with the identity
matrix) the 
matrix $\cA-$domain of a nc rational function, as the 
$\cA-$domain of any of its minimal realizations, 
centred at any point from its domain of regularity.
\begin{theorem}
\label{thm:10Sep19b}
Let 
$\fR\in\KK\plangle \ul{x}\prangle$. 
For every integer
$s\in\NN$,
a point 
$\ul{Y}\in dom_{s}(\fR)$,
a minimal nc Fornasini--Marchesini realization 
$\cR$ 
centred at 
$\ul{Y}$ 
of 
$\fR$,
and a unital stably finite 
$\KK-$algebra
$\cA$, we have 
\begin{align}
\label{eq:10Nov19d}
dom_{\cA}^{Mat}(\fR)=
\big\{\ul{\fa}\in\cA^d: 
I_s\otimes\ul{\fa}\in\Omega_{\cA}(\cR)\big\}
\end{align} 
and 
$I_s\otimes
\fR^{\cA}(\ul{\fa})
=\cR^{\cA}(I_s\otimes\ul{\fa})$
for every
$\ul{\fa}\in dom^{Mat}_{\cA}(\fR)$.
\end{theorem}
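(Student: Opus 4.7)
The plan is to prove the set equality (\ref{eq:10Nov19d}) by two inclusions, with the evaluation formula emerging along the way, using the matrix-valued nc rational expression $\fr_{\cR}$ from (\ref{eq:22May20a}) as the bridge. By Theorem \ref{thm:26Jun19a} together with Theorem \ref{thm:3Jun19a}, the scalar function $f$ produced in Theorem \ref{thm:26Jun19a} coincides with $\fR$, so $\fr_{\cR} = I_s \otimes \fR$ as an identity in $\KK\plangle \ul{x}\prangle^{s\times s}$. For the inclusion $\supseteq$, assume $I_s \otimes \ul{\fa} \in \Omega_{\cA}(\cR)$. A direct computation with the extension rule (\ref{eq:9Apr20b}), using that $I_s \otimes \fa_k = \sum_p E_{pp}\otimes \fa_k$ and the linearity of $\bA_k$, gives $\delta_{\cR}^{\cA}(\ul{\fa}) = \Lambda_{\ul{\bA},\ul{Y}}^{\cA}(I_s \otimes \ul{\fa})$, so this matrix is invertible in $\cA^{L\times L}$. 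Hence the $1\times 1$ matrix-valued nc rational expression $R := \ul{e}_1^T \fr_{\cR} \ul{e}_1$ has $\ul{\fa}$ in its matrix $\cA$-domain and represents $\fR$ (as the $(1,1)$ entry of $I_s \otimes \fR$), so $\ul{\fa} \in dom_{\cA}^{Mat}(\fR)$; moreover the same identity yields $\fr_{\cR}^{\cA}(\ul{\fa}) = \cR^{\cA}(I_s \otimes \ul{\fa})$, which combined with Proposition \ref{prop:3Jul17a} shows that both sides equal $I_s \otimes g$ for some $g \in \cA$, and extracting the $(1,1)$-entry identifies $g$ with $\fR^{\cA}(\ul{\fa}) = R^{\cA}(\ul{\fa})$, whence $I_s \otimes \fR^{\cA}(\ul{\fa}) = \cR^{\cA}(I_s \otimes \ul{\fa})$.

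For the inclusion $\subseteq$, let $\ul{\fa} \in dom_{\cA}^{Mat}(\fR)$, witnessed by a $1 \times 1$ matrix-valued nc rational expression $R$ representing $\fR$ with $\ul{\fa} \in dom_{\cA}^{Mat}(R)$. I would extend the synthesis-based proof of Theorem \ref{thm:MainThmFirstPaper}(3) to matrix-valued expressions: by induction on the construction of $R$, one attaches at each step a matrix-valued Fornasini--Marchesini-type realization of the current subexpression whose extended pencil over $\cA$ is invertible (in the appropriate matrix size) at $I_s \otimes \ul{\fa}$. The base case of matrices of nc polynomials is immediate, and the sum, product, and scalar-multiplication cases mimic the scalar constructions of Corollary \ref{cor:ExistAndUnique}. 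The delicate step is matrix inversion of an $m \times m$ matrix-valued subexpression $R_1$: here the block-triangular form of stably finiteness (Definition \ref{def:9Mar20a}) --- namely, that invertibility of a block upper triangular matrix over $\cA$ forces invertibility of its diagonal blocks --- is applied to the pencil of the composed realization, paralleling the scalar PV1 argument. Since the realization of $\fR$ that emerges has an invertible extended pencil at $I_s \otimes \ul{\fa}$, comparing with the minimal $\cR$ via the standard compression to a controllable and observable quotient (whose pencil is a block of the larger one, so stably finiteness again transmits invertibility) yields $I_s \otimes \ul{\fa} \in \Omega_{\cA}(\cR)$; the evaluation formula at this $\ul{\fa}$ then follows from the $\supseteq$ direction.

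The main obstacle is the matrix-inversion step in the synthesis for direction $\subseteq$: one must check that the matrix-valued inverse of an $m \times m$ subexpression $R_1$ admits a Fornasini--Marchesini realization whose pencil is block upper triangular with one diagonal block coinciding (up to similarity) with the pencil of the chosen realization of $R_1$, so that stably finiteness of $\cA$ transmits invertibility of the combined pencil down to that block. Once this step is made rigorous, the remaining synthesis cases and the compression step from the constructed realization of $\fR$ to the minimal realization $\cR$ follow the templates of Corollary \ref{cor:ExistAndUnique} and Theorem \ref{thm:MainThmFirstPaper}(3) with only cosmetic modifications.
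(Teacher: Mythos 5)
Your proposal is correct and follows essentially the same route as the paper: the inclusion $\supseteq$ via the identity $\Lambda^{\cA}_{\ul{\bA},\ul{Y}}(I_s\otimes\ul{\fa})=\delta^{\cA}_{\cR}(\ul{\fa})$ together with $\fr_{\cR}=I_s\otimes\fR$ (from Theorems \ref{thm:26Jun19a} and \ref{thm:3Jun19a}), and the inclusion $\subseteq$ by extending the synthesis-based realization argument of \cite{PV1} to matrix-valued expressions, with stable finiteness entering at the inversion step and in passing to the minimal realization $\cR$. The only cosmetic difference is that you derive the evaluation formula on the $\supseteq$ side via Proposition \ref{prop:3Jul17a} and extraction of the $(1,1)$-entry, whereas the paper obtains it from the extended synthesis construction; both arguments are at the same level of detail as the paper's own proof.
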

Most of the results in \cite{PV1}
can be modified and 
proven with the setting of 
the matrix domain (cf. 
\cite{KV4}), instead of 
the usual domain of regularity. We will skip most of the details, as the
proofs are pretty much the same as the proofs in the scalar case, 
yet give the reader the instructions to what exact changes have to be done.
\begin{proof}
A parallel version of
\cite[Theorem 2.4]{PV1} ---
that is the existence of a realization formula for any nc rational expression, that is centred at any point from its domain of regularity ---
can be proven for the matrix domain, via synthesis, while adding the part
where block matrices are allowed, however, the proof of 
\cite[Theorem 4.2]{PV1}
covers that part. So one can show the existence of a minimal nc Fornasini--Marchesini
realization
$\wt{\cR}$,
centred at any point from $dom(\fR)$, 
of
$\fR$ w.r.t $\cA$,
in the sense that
$\ul{\fa}\in dom_{\cA}^{Mat}(\fR)$ 
implies 
$I_s\otimes\ul{\fa}\in 
\Omega_{\cA}(\cR)$ 
and $I_s\otimes \fR^{\cA}(\ul{\fa})=
\wt{\cR}^{\cA}(I_s\otimes\ul{\fa})$.
Furthermore, it can be shown then that it holds for every minimal nc Fornasini--Marchesini
realization of 
$\fR$ centred at any point in
$dom(\fR)$.
This proves the first inclusion of 
(\ref{eq:10Nov19d}), that is
$$dom^{Mat}_{\cA}(\fR)\subseteq
\big\{\ul{\fa}\in\cA^d: 
I_s\otimes\ul{\fa}\in\Omega_{\cA}(\cR)\big\}.$$

On the other hand, let
$\ul{\fa}\in\cA^d$ 
and suppose that
$I_s\otimes\ul{\fa}\in\Omega_{\cA}(\cR)$,
thus
the matrix
\begin{multline*}
\Lambda_{\ul{\bA},\ul{Y}}^{\cA}
(I_s\otimes\ul{\fa})= 
I_L\otimes 
{\bf1}_{\cA}-\sum_{k=1}^d
\big[
(I_s\otimes \fa_k)
\bA_k^{\cA}-\bA_k(Y_k)
\otimes {\bf1}_{\cA}
\big]
\\=I_L\otimes {\bf1}_{\cA}
-\sum_{k=1}^d
\big[
\bA_k(I_s)\otimes 
\fa_k-\bA_k(Y_k)\otimes 
{\bf1}_{\cA}
\big]
=\delta_{\cR}^{\cA}(\ul{\fa})
\end{multline*} 
is invertible in 
$\cA^{\LTL}$.
Therefore
$\ul{\fa}\in dom_{\cA}^{Mat}
(\ul{e}_1^T\fr_{\cR}\ul{e}_1),$ 
where we recall that
$\fr_{\cR}$ 
is the 
$\sts$ 
matrix of nc rational functions given in
(\ref{eq:22May20a}).
However, as shown in the proof of 
Theorem
\ref{thm:26Jun19a},
there exists
$f\in\KK\plangle\ul{x}\prangle$
such that
$\fr_{\cR}=I_s\otimes f$, 
while in the proof of Theorem
\ref{thm:3Jun19a} we then showed that $f=\fR$, therefore 
$\fr_{\cR}=I_s\otimes \fR$. Finally, as 
$\ul{e}_1^T\fr_{\cR}\ul{e}_1=\fR$, 
we obtain that
$\ul{\fa}\in dom^{Mat}_{\cA}(\fR)$. 
\end{proof}
As an immediate consequence of 
Theorem
\ref{thm:10Sep19b}
and of
(\ref{eq:19Aug20a}), 
we get the following: 
\begin{cor}
\label{cor:28Aug20a}
Let 
$\fR\in\KK\plangle \ul{x}\prangle$. 
For every integer
$s\in\NN$,
a point 
$\ul{Y}\in dom_{s}(\fR)$,
a minimal nc Fornasini--Marchesini realization 
$\cR$ 
centred at 
$\ul{Y}$ 
of 
$\fR$,
and a unital stably finite 
$\KK-$algebra
$\cA$ which satisfies Property $\cI$, we have 
\begin{align}
dom_{\cA}(\fR)=
\big\{\ul{\fa}\in\cA^d: 
I_s\otimes\ul{\fa}\in\Omega_{\cA}(\cR)\big\}
\end{align} 
and 
$I_s\otimes
\fR^{\cA}(\ul{\fa})
=\cR^{\cA}(I_s\otimes\ul{\fa})$
for every
$\ul{\fa}\in dom^{Mat}_{\cA}(\fR)$.
\end{cor}
\begin{remark}
\label{rem:16May20a}
Let 
$n\in\NN$
and consider the unital
$\KK-$algebra 
$\cA_n=\KK^{\ntn}$. 
It is easily seen that
$dom_{\cA_n}(R)=dom_n(R)$
and 
$R(\ul{\fa})=R^{\cA_n}(\ul{\fa})$,
for every
nc rational expression $R$
and 
$\ul{\fa}\in dom_n(R)$.
Therefore, Corollary
\ref{cor:28Aug20a}
actually gives us another way of evaluating nc rational
functions anywhere on their domain, 
that is by using the 
$\cA_n-$evaluation of the functions. 
\end{remark}

\section{Stable Extended Domain}
\label{sec:stable}
In this last section we use results 
from previous sections to show that the so called 
stable extended domain of a 
nc rational function, coincides with the domain 
of regularity of the function. 
We begin by recalling the definitions and 
some properties of the extended and stable 
extended domains of  nc rational expressions 
and functions, see
\cite{KV1,V1}
for more details.
\\\\
Let 
$\ul{\Xi}=(\Xi_1,\ldots,\Xi_d)$
be the
$d-$tuple of  
$\ntn$ 
generic matrices, i.e., the matrices whose 
entries are independent commuting variables. Let 
$R$ 
be a non-degenerate nc rational expression. 
For every 
$n\in\NN$ 
such that 
$dom_n(R)\ne\emptyset$, 
let 
$R[n]:=R(\ul{\Xi})$, 
that is an 
$\ntn$ 
matrix whose entries are
rational functions in 
$dn^2$ 
(commutative) variables. If 
$R_1$
and
$R_2$ 
are 
$(\KK^d)_{nc}-$evaluation equivalent nc 
rational expressions, then 
$R_1[n]=R_2[n]$
for every 
$n\in\NN$ 
such that 
$dom_n(R_1),dom_n(R_2)\ne\emptyset$. 
Therefore, if 
$\fR\in\KK\plangle\ul{x}\prangle$ 
and 
$dom_n(\fR)\ne\emptyset$, 
we define 
$\fR[n]:=R[n]$ 
for any 
$R\in\fR$ 
such that 
$dom_n(R)\ne\emptyset$.  
Let the \textbf{extended domain} of
$\fR$ 
be
$$edom(\fR):=\coprod_{n=1}^\infty 
edom_n(\fR),$$
where 
$edom_n(\fR)$ 
is defined as the intersection of the 
domains of all entries in 
$\fR[n]$, 
for such 
$n\in\NN$ 
with 
$dom_n(\fR)\ne\emptyset$ 
and 
$edom_n(\fR)=\emptyset$ 
otherwise, thus 
$edom_n(\fR)$ 
is a Zariski open set in
$(\KK^{\ntn})^d$.  
This is the definition of the extended domain 
(at the level of  
$d-$tuples of 
$\ntn$ matrices) as it appears in
\cite{V1}, 
however in
\cite{KV1}
there is a different definition for the 
extended domain,
while it turns out that the stable 
extended domains  coming from these two 
different definitions do coincide, see Remark
\ref{rem:12Apr20a}. 

As pointed out in 
\cite{V1}, 
the extended domain of regularity of a nc 
rational function 
$\fR$ 
is 
\textbf{not} 
closed 
under direct sums, however this can be 
fixed by considering the 
\textbf{stable extended domain}
of 
$\fR$, 
that is
$$edom^{st}(\fR):=\coprod_{n=1}^\infty
edom^{st}_n(\fR),$$
where
$$edom^{st}_n(\fR):=
\Big\{\ul{X}\in 
(\KK^{\ntn})^d:I_m\otimes
\ul{X}\in edom_{mn}(\fR)
\text{ for every }m\in\NN \Big\}.$$
Thus, we have the relations
\begin{align}
\label{eq:11Nov19e}
dom(\fR)\subseteq edom^{st}
(\fR)\subseteq edom(\fR),
\end{align}
while in 
\cite[Theorem 3.10]{V1} 
it was shown that
\begin{align}
\label{eq:5Nov19c}
dom(\fR)=edom^{st}(\fR)
\end{align} 
for every 
$\fR\in\KK\plangle\ul{x}\prangle$ 
with
$dom_1(\fR)\ne\emptyset$.
The proof of
(\ref{eq:5Nov19c}) 
is done by 
considering a descriptor realization and applying the ideas from
\cite{KV1},  that is showing that both the 
domain and the stable extended domain of a 
nc rational function, that is regular 
at a scalar point, are equal to the 
invertibility set of the pencil which appears 
from such a minimal realization. 
\\
\\
The purpose of this section is to generalize 
the equality in
(\ref{eq:5Nov19c})
for an arbitrary 
nc rational function 
$\fR\in\KK\plangle\ul{x}\prangle$, 
by showing that 
$edom^{st}(\fR)$ 
coincides with the invertibility set of a  
pencil which corresponds to a minimal nc 
Fornasini--Marchesini realization of 
$\fR$, 
while on the other hand the invertibility 
set coincides with 
$dom(\fR)$, as we already  showed in Theorem
\ref{thm:3Jun19a}.
The generalization takes most of 
its ideas from 
\cite{KV1} and
\cite{V1}, 
where the case of nc rational functions 
which are regular at a scalar point is 
considered, while invoking our results 
on realizations with a centre of an arbitrary size
and the corresponding generalizations
stated in Section
\ref{subsec:cal}. 

In the spirit of 
\cite[Lemma 3.4]{V1}, 
we first show that the stable extended domain 
of any nc rational function is an upper 
(and lower) admissible nc set.
\begin{lemma}
\label{lem:26Sep19a}
Let 
$\fR\in\KK\plangle\ul{x}\prangle$, 
then  
$edom^{st}(\fR)$ 
is a lower (and upper) admissible nc set.
\end{lemma}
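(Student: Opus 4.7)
The plan is to mirror the strategy of Corollary \ref{cor:12May20a}, where the analogous property for $dom(\fR)$ was reduced to the upper admissibility of $\Omega(\cR)$ via a minimal realization, while keeping in mind that $edom^{st}(\fR)$ is a priori strictly larger than $dom(\fR)$. First I would fix a minimal nc Fornasini--Marchesini realization $\cR$ of $\fR$ centred at some $\ul{Y}\in dom_s(\fR)$ (guaranteed by Corollary \ref{cor:ExistAndUnique}), with pencil $\Lambda=\Lambda_{\ul{\bA},\ul{Y}}$. Two structural preliminaries are needed. (a) \emph{Similarity invariance} of $edom_n(\fR)$: the identity $\fR[n](T\ul{\Xi}T^{-1})=T\fR[n](\ul{\Xi})T^{-1}$ of matrices of commutative rational functions forces $\ul{X}\in edom_n(\fR)$ iff $T\ul{X}T^{-1}\in edom_n(\fR)$ for any invertible $T\in\KK^{n\times n}$, and hence $edom^{st}_n(\fR)$ is similarity invariant as well. (b) \emph{Radicality}: $I_m\otimes\ul{Z}\in edom_{mn}(\fR)$ implies $\ul{Z}\in edom_n(\fR)$, because restricting the generic $mn\times mn$ matrix to $I_m\otimes\ul{\Xi}$ produces $I_m\otimes\fR[n](\ul{\Xi})$, so any lowest-terms denominator of $\fR[n]$ is dominated by the corresponding restriction of a lowest-terms denominator of $\fR[mn]$.

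For direct-sum closure, let $\ul{X}^i\in edom^{st}_{n_i}(\fR)$ for $i=1,2$ and $\mu\in\NN$. Using (a), the problem $I_\mu\otimes(\ul{X}^1\oplus\ul{X}^2)\in edom_{\mu(n_1+n_2)}(\fR)$ reduces to showing $(I_\mu\otimes\ul{X}^1)\oplus(I_\mu\otimes\ul{X}^2)\in edom_{\mu(n_1+n_2)}(\fR)$. Appealing to the direct-sum property of $\fR$ as a matrix of commutative rational functions, entries of $\fR[\mu(n_1+n_2)]$ restricted to the block-diagonal locus are controlled by entries of $\fR[\mu n_1]\oplus\fR[\mu n_2]$, and after tensoring up by $I_s$ so that the size becomes a multiple of $s$, the localization
$$\det\Lambda\big((I_{s\mu}\otimes\ul{X}^1)\oplus(I_{s\mu}\otimes\ul{X}^2)\big)=\det\Lambda(I_{s\mu}\otimes\ul{X}^1)\cdot\det\Lambda(I_{s\mu}\otimes\ul{X}^2)$$
allows the obstruction to be read off summand by summand, which is precisely what the stability hypothesis on each $\ul{X}^i$ is designed to overcome; pulling the conclusion back from level $s\mu$ to level $\mu$ via the radicality in (b) gives the direct-sum closure.

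For upper admissibility, given $\ul{Z}\in(\KK^{n_1\times n_2})^d$ I would set $M(c)=\begin{pmatrix}\ul{X}^1 & c\ul{Z}\\ 0 & \ul{X}^2\end{pmatrix}$ and exploit the decisive block-triangular identity
$$\det\Lambda\begin{pmatrix}I_{s\mu}\otimes\ul{X}^1 & c\,I_{s\mu}\otimes\ul{Z}\\ 0 & I_{s\mu}\otimes\ul{X}^2\end{pmatrix}=\det\Lambda(I_{s\mu}\otimes\ul{X}^1)\cdot\det\Lambda(I_{s\mu}\otimes\ul{X}^2),$$
which is entirely independent of $c$. Since any lowest-terms common denominator of $\fR[s\mu(n_1+n_2)]$ divides a power of $\det\Lambda$, its restriction along the one-parameter family $c\mapsto I_{s\mu}\otimes M(c)$ (after the permutation similarity from (a)) is a polynomial in $c$ dividing a constant in $c$, which is non-zero at $c=0$ by the direct-sum step; hence it is itself a non-zero constant in $c$. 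Every $c$ therefore gives $I_{s\mu}\otimes M(c)\in edom_{s\mu(n_1+n_2)}(\fR)$, and radicality yields $I_\mu\otimes M(c)\in edom_{\mu(n_1+n_2)}(\fR)$. Crucially the \emph{same} $c$ works uniformly in $\mu$ (in fact any $c\ne 0$ works), so $M(c)\in edom^{st}_{n_1+n_2}(\fR)$. Lower admissibility is entirely symmetric.

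The hard part will be bridging between the nominal pencil-based denominator $\det\Lambda$ -- which is only naturally defined at sizes divisible by $s$ and controls $dom$ rather than $edom$ -- and the genuine lowest-terms denominators of $\fR[n]$ at arbitrary $n$. The resolution is the combination of the radicality of $edom$ with the ``tensor-up'' trick of passing from $\mu$ to $s\mu$, which simultaneously activates the block-triangular structure of $\Lambda$ and supplies the $c$-independence that forces a single scalar $c$ to serve all $\mu$ at once.
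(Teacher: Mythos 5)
There is a genuine gap, and it is a circularity. Your pencil-based argument hinges on the scalar $\gamma:=\det\Lambda_{\ul{\bA},\ul{Y}}\big((I_{s\mu}\otimes\ul{X}^1)\oplus(I_{s\mu}\otimes\ul{X}^2)\big)$ being non-zero: only then does ``a polynomial in $c$ dividing a constant'' force the restricted denominator to be a non-zero constant (if $\gamma=0$, every polynomial divides it and knowing the denominator is non-zero at $c=0$ tells you nothing about other values of $c$). But non-vanishing of $\det\Lambda_{\ul{\bA},\ul{Y}}$ at $I_{s\mu}\otimes\ul{X}^i$ is exactly the statement that points of $edom^{st}(\fR)$ lie in the invertibility set $\Omega(\cR)$ --- which is the content of Theorem \ref{thm:31Jan18a} and Corollary \ref{cor:10Nov19b}, whose proofs \emph{use} the present lemma (via Lemma \ref{lem:30Oct19a}). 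Your hypotheses only give $I_\ell\otimes\ul{X}^i\in edom(\fR)$, not membership in $\Omega(\cR)$, so the ``obstruction read off summand by summand'' in your direct-sum step suffers from the same problem; recall that $edom(\fR)$ itself is \emph{not} closed under direct sums, so this step cannot be waved through and is in fact the non-trivial input \cite[Proposition 3.3]{V1} that the paper cites. Two further, smaller issues: the divisibility ``any lowest-terms denominator of $\fR[N]$ divides a power of $\det\Lambda_{\ul{\bA},\ul{Y}}$'' rests on the Nullstellensatz, hence on $\KK$ being algebraically closed, while the lemma is stated over an arbitrary field; and your radicality claim (b), needed only because you detour through levels divisible by $s$, is asserted rather than proved (one must at least know $dom_n(\fR)\neq\emptyset$ to define $\fR[n]$ and identify the restriction).

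The paper avoids the realization entirely at this point. Direct-sum closure of $edom^{st}(\fR)$ is quoted from \cite[Proposition 3.3]{V1}; then the triangular step follows from a purely structural fact about the commutative rational functions themselves: when $\fR[n_1+n_2]$ is evaluated at a block-triangular tuple of generic matrices, the denominators involve only the diagonal generic blocks (sums, products and inverses of block-triangular matrices have diagonal blocks depending only on diagonal blocks), so membership of $\ul{X}^1\oplus\ul{X}^2$ in $edom_{n_1+n_2}(\fR)$ propagates to $\begin{bmatrix}\ul{X}^1&\ul{0}\\ \ul{Z}&\ul{X}^2\end{bmatrix}$ for \emph{every} $\ul{Z}$ (no scalar $c$ is needed). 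Stability is then obtained by running this argument at each amplification level $\ell$ with diagonal blocks $I_\ell\otimes\ul{X}^i$ and a permuted off-diagonal block, and conjugating by the permutation $E(\cdot,\cdot)$ matrices, using that $edom(\fR)$ is closed under simultaneous similarity. If you want to keep your framework, the fix is to replace the pencil-divisibility mechanism by this block-triangular denominator fact; as it stands, the key non-vanishing you need is precisely what the lemma is ultimately for.
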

\begin{proof}
Let
$\ul{X}^1=(X^1_1,\ldots,X^1_d)
\in edom^{st}_{n_1}(\fR)$
and
$\ul{X}^2=(X^2_1,\ldots,X^2_d)
\in dom_{n_2}^{st}(\fR)$.
As 
$edom^{st}(\fR)$ 
is closed under direct sums (see
\cite[Proposition 3.3]{V1} 
for a proof), we have 
$\ul{X}^1\oplus \ul{X}^2\in 
edom^{st}_{n_1+n_2}(\fR)$ 
and hence
$\ul{X}^1\oplus \ul{X}^2\in 
edom_{n_1+n_2}(\fR)$.
Let
$\ul{\Xi}^{11}=(\Xi^{11}_1,
\ldots,\Xi^{11}_d)$ 
be a 
$d-$tuple of 
$n_1\times n_1$ 
generic matrices,
$\ul{\Xi}^{22}=
(\Xi^{22}_1,\ldots,\Xi^{22}_d)$ 
be a $d-$tuple of
$n_2\times n_2$ 
generic matrices and 
$\ul{\Xi}^{21}
=(\Xi_1^{21},\ldots,\Xi_d^{21})$ 
be a 
$d-$tuple of 
$n_2\times n_1$ 
generic matrices.
Due to the simple fact that by 
inverting, taking products and taking sums 
of 
lower
triangular block matrices, the 
outcome diagonal blocks only depend on the 
initial diagonal blocks, the denominators 
of the entries in the matrix
$$\fR[n_1+n_2]
\begin{bmatrix}
\ul{\Xi}^{11}&\ul{0}\\
\ul{\Xi}^{21}&\ul{\Xi}^{22}\\
\end{bmatrix}=\fR[n_1+n_2]
\Big(\begin{bmatrix}
\Xi^{11}_1&0\\
\Xi_1^{21}&\Xi^{22}_1\\
\end{bmatrix},\ldots,
\begin{bmatrix}
\Xi^{11}_d&0\\
\Xi_d^{21}&\Xi^{22}_d\\
\end{bmatrix}\Big)$$
are independent of (the entries of matrices in) 
$\ul{\Xi}^{21}$, 
therefore 
$\ul{X}^1\oplus \ul{X}^2
\in edom_{n_1+n_2}(\fR)$ 
implies that
$$\Big(
\begin{bmatrix}
X^1_1&0\\
Z_1&X^2_1\\
\end{bmatrix},\ldots,
\begin{bmatrix}
X^1_d&0\\
Z_d&X^2_d\\
\end{bmatrix}
\Big)=
\begin{bmatrix}
\ul{X}^1&\ul{0}\\
\ul{Z}&\ul{X}^2\\
\end{bmatrix}
\in edom_{n_1+n_2}(\fR)$$
for every 
$\ul{Z}=(Z_1,\ldots,Z_d)\in 
(\KK^{n_2\times n_1})^d$.

Finally, for every 
$\ell\ge1$, 
it is easily seen that
$I_{\ell}\otimes \ul{X}^1\in 
edom_{n_1\ell}^{st}(\fR)$ 
and
$I_{\ell}\otimes\ul{X}^2\in 
edom^{st}_{n_2\ell}(\fR)$, 
these imply by the first part of the proof that
$$\begin{bmatrix}
I_{\ell}\otimes\ul{X}^1&\ul{0}\\
E(n_2,\ell)^T(\ul{Z}\otimes 
I_{\ell})E(n_1,\ell)
&I_{\ell}\otimes\ul{X}^2\\
\end{bmatrix}\in 
edom_{(n_1+n_2)\ell}(\fR),$$
while on the other hand
\begin{multline*}
I_{\ell}\otimes
\begin{bmatrix}
\ul{X}^1&\ul{0}\\
\ul{Z}&\ul{X}^2\\
\end{bmatrix}=E(\ell,n_1+n_2)^T  
\begin{bmatrix}
E(n_1,\ell)&0\\
0&E(n_2,\ell)\\
\end{bmatrix}
\\
\begin{bmatrix}
I_{\ell}\otimes\ul{X}^1&\ul{0}\\
E(n_2,\ell)^T(Z\otimes I_{\ell})
E(n_1,\ell)&I_{\ell}\otimes\ul{X}^2\\
\end{bmatrix}
\begin{bmatrix}
E(n_1,\ell)&0\\
0&E(n_2,\ell)\\
\end{bmatrix}^T
E(\ell,n_1+n_2).
\end{multline*}
Using the fact that 
$edom(\fR)$ 
is closed under simultaneous conjugation, 
we conclude that 
\begin{multline*}
I_{\ell}\otimes
\begin{bmatrix}
\ul{X}^1&\ul{0}\\
\ul{Z}&\ul{X}^2\\
\end{bmatrix}=\Big(
I_{\ell}\otimes
\begin{bmatrix}
X^1_1&0\\
Z_1&X^2_1\\
\end{bmatrix},\ldots, 
I_{\ell}\otimes
\begin{bmatrix}
X^1_d&0\\
Z_d&X^2_d\\
\end{bmatrix}
\Big)
\\ \in edom_{(n_1+n_2)\ell}(\fR),
\,\forall\ell \ge 1,
\end{multline*}
i.e., 
$\begin{bmatrix}
\ul{X}^1&\ul{0}\\
\ul{Z}&\ul{X}^2\\
\end{bmatrix}\in 
edom^{st}_{n_1+n_2}(\fR)$ 
for every 
$\ul{Z}\in(\KK^{n_2\times n_1})^d.$
Similar arguments for 
upper
block triangular 
matrices imply that
$edom^{st}(\fR)$ 
is also 
upper
admissible.
\end{proof}

\begin{remark}
\label{rem:5Nov19b}
The proof of Proposition 3.3 in
\cite{V1} 
uses results from
\cite{HMS} for descriptor realizations (cf. 
\cite[Lemma 3.2]{V1}), 
however if one wants to be self contained,
we can do that 
by using similar arguments regarding nc 
Fornasini--Marchesini realizations
(e.g., see Corollary 
\ref{cor:ExistAndUnique}
and
Theorem
\ref{thm:MainThmFirstPaper},
as the main use of 
realizations in the proof is the existence of 
such a realization with a good domination 
on the domain of the corresponding expression
which represents the function). 
\end{remark}
As
$edom^{st}(\fR)$ 
is an upper admissible nc set, we can now 
consider the nc difference-differential 
calculus of the nc function
$f$, defined on each level by
\begin{align}
\label{eq:11Nov19a}
f[n]=\fR[n]\restriction_{edom^{st}_n(\fR)},
\,\forall n\in\NN
\end{align}
on its domain that is
$edom^{st}(\fR)$.
\begin{lemma}
\label{lem:30Oct19a}
Let
$\fR\in\KK\plangle\ul{x}\prangle$
and let
$f$ 
be as in
(\ref{eq:11Nov19a}).
For every two integers
$k_1,k_2\ge0$ 
such that 
$k_1+k_2>0,\,
n\in\NN,
\,\ul{Y}^1,\ldots,
\ul{Y}^{k_1+k_2}\in 
dom_n(\fR)$,
$Z^1,\ldots,Z^{k_1+k_2}
\in\KK^{\ntn},$
and 
$\omega\in\cG_d$ 
of length 
$k_1+k_2$, 
we have that
\begin{align}
\label{eq:4Nov19a}
\Delta^{\omega} f
(\ul{Y}^1,\ldots,\ul{Y}^{k_1},
\ul{X},\ul{Y}^{k_1+1},
\ldots,\ul{Y}^{k_1+k_2})
(Z^1,\ldots,Z^{k_1+k_2})
\end{align}
is a matrix of rational functions which
are all regular on 
$edom^{st}_n(\fR)$.
\end{lemma}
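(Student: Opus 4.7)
The plan is to realize the quantity in \eqref{eq:4Nov19a} as a single block entry of $f$ evaluated at a block upper bidiagonal tuple of size $Nn$, with $N=k_1+k_2+1$, and then transfer regularity from this enlarged level down to the level of interest. Concretely, set $\omega = g_{j_1}\cdots g_{j_{k_1+k_2}}$ and form
\begin{equation*}
\ul{W} := \begin{pmatrix}
\ul{Y}^1 & \ul{Z}^{1,(j_1)} & & & & & \\
 & \ddots & \ddots & & & & \\
 & & \ul{Y}^{k_1} & \ul{Z}^{k_1,(j_{k_1})} & & & \\
 & & & \ul{X} & \ul{Z}^{k_1+1,(j_{k_1+1})} & & \\
 & & & & \ul{Y}^{k_1+1} & \ul{Z}^{k_1+2,(j_{k_1+2})} & \\
 & & & & & \ddots & \ddots \\
 & & & & & & \ul{Y}^{k_1+k_2}
\end{pmatrix}.
\end{equation*}
By the block-upper-triangular evaluation formula for nc functions (cf.\ \cite[Theorem 3.11]{KV1}, i.e.\ the iterated version of $f\!\begin{pmatrix}\ul{X}^1 & \ul{Z}^{(j)} \\ 0 & \ul{X}^2\end{pmatrix} = \begin{pmatrix} f(\ul{X}^1) & \Delta_j f(\ul{X}^1,\ul{X}^2)(Z) \\ 0 & f(\ul{X}^2) \end{pmatrix}$ used already in the proof of Lemma~\ref{lem:12Ap17a}), the $(1,N)$-block of $f(\ul{W})$ equals the expression in \eqref{eq:4Nov19a}.

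For this identification to apply, I need $\ul{W}$ to sit inside the domain $edom^{st}(\fR)$ of $f$. Every $\ul{Y}^i$ lies in $dom_n(\fR) \subseteq edom^{st}_n(\fR)$ by \eqref{eq:11Nov19e}, while $\ul{X} \in edom^{st}_n(\fR)$ by hypothesis, so iterating the upper admissibility just established in Lemma~\ref{lem:26Sep19a} places $\ul{W}$ in $edom^{st}_{Nn}(\fR)$, and in particular in $edom_{Nn}(\fR)$. Hence for each $\ul{X}_0 \in edom^{st}_n(\fR)$ the corresponding enlarged point $\ul{W}_0$ is a regular point of every entry of $\fR[Nn]$.

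To conclude, the entries of $\fR[Nn]$ are commutative rational functions in the $d(Nn)^2$ entries of a generic $d$-tuple of $Nn\times Nn$ matrices. Specializing the entries of the $\ul{Y}^i$'s and the $Z^i$'s to their fixed scalar values turns the $(1,N)$-block into a matrix of rational functions purely in the $dn^2$ entries of $\ul{X}$. By the previous paragraph these rational functions have a well-defined pointwise value at every $\ul{X}_0 \in edom^{st}_n(\fR)$, so — after choosing reduced representatives — their denominators do not vanish on $edom^{st}_n(\fR)$, and the claim follows.

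The main obstacle I anticipate is the very first step: precisely verifying that the iterated derivative $\Delta^\omega f$, with one of its middle arguments pinned to $\ul{X}$ and the others to the $\ul{Y}^i$'s, is extracted exactly as the far corner block of $f(\ul{W})$ with diagonal entries in the stated order. One handles this by peeling off one $\Delta_{j_i}$ at a time via the two-block formula, checking at each step that the intermediate block upper triangular tuple remains in $edom^{st}$ (again by Lemma~\ref{lem:26Sep19a}) so that $f$ can legitimately be evaluated there; and by matching the combinatorics of the resulting block pattern to the definition of $\Delta^\omega$ given recursively through the $\Delta_j$'s. Everything else is a direct consequence of the nc function machinery already set up, together with the admissibility result in Lemma~\ref{lem:26Sep19a}.
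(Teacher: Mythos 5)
Your proposal is correct and follows essentially the same route as the paper: embed the data into a block upper-bidiagonal tuple of size $n(k_1+k_2+1)$ with the $\ul{Y}^i$'s and $\ul{X}$ on the diagonal, use Lemma \ref{lem:26Sep19a} to place it in $edom^{st}_{n(k_1+k_2+1)}(\fR)$, specialize all generic blocks except the middle slot, and identify the top-right corner block with $\Delta^{\omega}f$ via the block-triangular evaluation theorem (the paper cites \cite[Theorem 3.11]{KV3} for this last identification, which is exactly your "peel off one $\Delta_{j_i}$ at a time" step). The only cosmetic difference is that the paper works directly with a generic $d$-tuple $\ul{\Xi}'$ in the middle slot, which makes the identification as rational functions of $\ul{X}$ immediate.
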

\begin{proof}
Let
$\ul{W}\in edom^{st}_{n}(\fR)$.
By setting
$\ul{Z}^j=\ul{e}_j^T\otimes Z^j$ 
(for $1\le j\le k_1+k_2$) 
and applying Lemma
\ref{lem:26Sep19a},
we have
\begin{align*}
P_{\ul{W}}:=\begin{pmatrix}
\ul{Y}^1&\ul{Z}^1&\ul{0}&\ldots
&\ldots&\ldots&\ul{0}\\
\ul{0}&\ddots&\ddots&
&&&\vdots\\
\vdots&\ddots&\ul{Y}^{k_1}&\ul{Z}^{k_1}
&\ul{0}&&\vdots\\
\vdots&&\ul{0}&\ul{W}
&\ul{Z}^{k_1+1}&\ddots&\vdots\\
\vdots&&&\ul{0}
&\ul{Y}^{k_1+1}&\ddots&\ul{0}\\
\vdots&&&
&\ddots&\ddots&\ul{Z}^{k_1+k_2}\\
\ul{0}&\ldots&\ldots&\ldots
&\ldots&\ul{0}&\ul{Y}^{k_1+k_2}\\
\end{pmatrix}
\in edom^{st}_{n(k_1+k_2+1)}(\fR),
\end{align*}
which means that
$f(\ul{\Xi})=\fR[n(k_1+k_2+1)](\ul{\Xi})$
is a matrix of rational functions (in
$$dn^2(k_1+k_2+1)^2$$ 
commuting variables) which are all regular at 
$P_{\ul{W}}$, 
where
$\ul{\Xi}$ 
is a 
$d-$tuple of generic matrices of size 
$n(k_1+k_2+1)\times n(k_1+k_2+1)$.
In particular, we can fix all of them, 
except for the ones which correspond 
to the location of 
$\ul{W}$
in the block matrix, to obtain that also 
\begin{align}
\label{eq:12Apr20b}
f[n(k_1+k_2+1)]
\begin{pmatrix}
\ul{Y}^1&\ul{Z}^1&\ul{0}&\ldots
&\ldots&\ldots&\ul{0}\\
\ul{0}&\ddots&\ddots&
&&&\vdots\\
\vdots&\ddots&\ul{Y}^{k_1}&\ul{Z}^{k_1}
&\ul{0}&&\vdots\\
\vdots&&\ul{0}&\ul{\Xi}^\prime
&\ul{Z}^{k_1+1}&\ddots&\vdots\\
\vdots&&&\ul{0}
&\ul{Y}^{k_1+1}&\ddots&\ul{0}\\
\vdots&&&
&\ddots&\ddots&\ul{Z}^{k_1+k_2}\\
\ul{0}&\ldots&\ldots&\ldots
&\ldots&\ul{0}&\ul{Y}^{k_1+k_2}\\
\end{pmatrix}
\end{align}
is a matrix of rational functions (in 
$dn^2$ 
commuting variables) which are all regular at 
$\ul{W}$,
where
$\ul{\Xi}^\prime$
is a 
$d-$tuple of generic matrices of size 
$\ntn$.
However, due to
\cite[Theorem 3.11]{KV3},
we know that the upper most right block matrix of
the matrix in
(\ref{eq:12Apr20b})
is equal to
\begin{align*}
\Delta^{\omega} f
(\ul{Y}^1,\ldots,\ul{Y}^{k_1},
\ul{\Xi}^\prime,\ul{Y}^{k_1+1},
\ldots,\ul{Y}^{k_1+k_2})
(Z^1,\ldots,Z^{k_1+k_2}),
\end{align*}
hence
the matrix in
(\ref{eq:4Nov19a}) 
consists of rational functions, all regular at
$\ul{W}$, 
as needed. 
\end{proof}
Now we are ready to prove that the stable 
extended domain of a nc rational function 
coincides with its domain of regularity, 
as well as with the invertibility set of 
any of its minimal realizations centred 
at a point in 
$dom_s(\fR)$, \textbf{first 
on all levels which are multiples of 
$s$.}
\begin{theorem}
\label{thm:31Jan18a}
Let
$\fR\in\KK\plangle\ul{x}\prangle$
and let 
$\cR$
be a minimal nc Fornasini--Marchesini realization
of 
$\fR$, 
centred at
$\ul{Y}\in dom_s(\fR)$. 
Then 
\begin{align}
\label{eq:11No19c}
edom^{st}_{sm}(\fR)= 
\Omega_{sm}(\cR)
=dom_{sm}(\fR),\,
\forall m\in\NN.
\end{align}
\end{theorem}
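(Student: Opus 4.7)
The plan is to deduce the chain of equalities from the two main ingredients already available. First, Theorem \ref{thm:3Jun19a} gives directly the equality $\Omega_{sm}(\cR) = dom_{sm}(\fR)$, and (\ref{eq:11Nov19e}) gives the inclusion $dom_{sm}(\fR) \subseteq edom^{st}_{sm}(\fR)$. Hence the entire content of the theorem reduces to the non-trivial inclusion
\begin{equation*}
edom^{st}_{sm}(\fR) \subseteq \Omega_{sm}(\cR).
\end{equation*}
To prove this, I would follow the strategy used in \cite[Theorem 3.10]{V1} for the scalar-centre case, but recast entirely in terms of the nc Fornasini--Marchesini realization machinery of Sections \ref{sec:la}--\ref{subsec:cal}.

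The setup would be as follows. Let $f$ denote $\fR$ restricted to $edom^{st}(\fR)$, which by Lemma \ref{lem:26Sep19a} is a nc function on an upper admissible nc set containing the point $I_m \otimes \ul{Y} \in dom_{sm}(\fR)$. By Lemma \ref{lem:30Oct19a}, for every $k \ge 0$ and every $\omega \in \cG_d$ of length $k$, the higher difference-differential $\Delta^{\omega} f$ evaluated at tuples whose first entry is a variable $\ul{X}$ and whose remaining entries are $I_m \otimes \ul{Y}$ (with fixed $Z^1,\ldots,Z^k \in \cE_{sm}$) is a matrix of rational functions in the entries of $\ul{X}$, regular on all of $edom^{st}_{sm}(\fR)$. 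The same holds for the mixed second-order expression $\Delta_{\Delta_f}^{(\ell)}(\ul{X})$, obtained by further differentiating in the $I_m\otimes \ul{Y}$-slots. In particular, the rational matrices $\Delta_{\cR}^{(\ell)}(\ul{X})$ and $\Delta_{\Delta_{\cR}}^{(\ell)}(\ul{X})$ of equations (\ref{eq:3Mar19a}) and (\ref{eq:3Mar19c}), originally defined via the realization $\cR$ on $\Omega_{sm}(\cR)$, coincide on $\Omega_{sm}(\cR)$ with the derivative matrices of $f$ by Theorem \ref{thm:3Jun19a}, and hence extend to rational matrices regular on all of $edom^{st}_{sm}(\fR)$.

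The core of the proof is then the identity
\begin{equation*}
\fo^{(L)} \Delta_{\Delta_{\cR}}^{(\ell)}(\ul{X}) \fc^{(R)} \Lambda_{\ul{\bA},\ul{Y}}(\ul{X}) = I_{Lm},
\end{equation*}
which was derived in the proof of Theorem \ref{thm:6Aug19a} for $\ul{X} \in \Omega_{sm}(\cR)$, with $\ell$ chosen large enough so that $\fc_\ell$ is right invertible and $\fo_\ell$ is left invertible. Both sides are matrix-valued rational functions in the entries of $\ul{X}$. Since $\Omega_{sm}(\cR)$ is a nonempty Zariski open subset of $(\KK^{sm \times sm})^d$, the identity is actually a rational identity on the affine space $(\KK^{sm \times sm})^d$, and therefore holds pointwise at every $\ul{X}$ at which both sides are defined. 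For $\ul{X} \in edom^{st}_{sm}(\fR)$, the left-hand side is regular by Lemma \ref{lem:30Oct19a} (after identifying the derivatives of $\cR$ with those of $f$ as above), so the identity holds numerically there. But then $\fo^{(L)} \Delta_{\Delta_{\cR}}^{(\ell)}(\ul{X}) \fc^{(R)}$ is a right inverse of $\Lambda_{\ul{\bA},\ul{Y}}(\ul{X})$; since this is a square $Lm \times Lm$ matrix over the field $\KK$, it is in fact invertible, and $\ul{X} \in \Omega_{sm}(\cR)$, as required.

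The main obstacle I anticipate is the careful bookkeeping needed to justify that the derivative matrices computed via the nc function $\cR$ on $\Omega(\cR)$ and via the nc function $f$ on $edom^{st}(\fR)$ truly agree wherever both are defined, so that the identity of rational functions established on $\Omega_{sm}(\cR)$ can be evaluated, via the extension provided by Lemma \ref{lem:30Oct19a}, at any $\ul{X} \in edom^{st}_{sm}(\fR)$. This hinges on the fact that both $\Omega(\cR)$ and $edom^{st}(\fR)$ are upper admissible nc sets whose intersection $\Omega_{sm}(\cR)$ is where $\cR = \fR = f$, combined with the block-matrix formula (\ref{eq:12Apr20b}) that expresses $\Delta^\omega$ as an upper-right block of a nc function evaluation, so that regularity of the block-matrix evaluation on $edom^{st}$ transfers directly to regularity of the derivative matrix. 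Once this identification is in place, the rational-identity extension and the right-inverse conclusion are routine.
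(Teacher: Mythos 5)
Your proposal is correct and follows essentially the same route as the paper: both reduce the theorem to the inclusion $edom^{st}_{sm}(\fR)\subseteq\Omega_{sm}(\cR)$, use the identity $\fo^{(L)}\Delta_{\Delta_\cR}^{(\ell)}(\ul{X})\fc^{(R)}\Lambda_{\ul{\bA},\ul{Y}}(\ul{X})=I_{Lm}$ from the proof of Theorem \ref{thm:6Aug19a}, identify the derivative matrices of $\cR$ with those of $f=\fR\restriction_{edom^{st}(\fR)}$ on $\Omega_{sm}(\cR)=dom_{sm}(\fR)$ via Theorem \ref{thm:3Jun19a}, invoke Lemma \ref{lem:30Oct19a} for regularity on $edom^{st}_{sm}(\fR)$, and extend the identity by Zariski density to conclude invertibility of the pencil. (Your phrase ``right inverse'' should read ``left inverse,'' but this is immaterial since the matrix is square over $\KK$.)
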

\begin{proof}
As 
$\cR$ 
is a minimal nc Fornasini--Marchesini realization of 
$\fR$, 
it follows from Theorem
\ref{thm:5Nov19a}
that the coefficients of 
$\cR$ 
satisfy the 
$\cL-\cL\cA$ 
conditions (cf. equations
$(\ref{eq:12Jun17a})-(\ref{eq:12Jun17d})$), 
while Theorem
\ref{thm:19Oct17a} 
implies that 
$\cR:\Omega(\cR)
\rightarrow\KK_{nc}$ 
is a nc function. In the proof of Theorem
\ref{thm:6Aug19a},
(cf. equation 
(\ref{eq:3Mar19d})),
we showed
that
$\varphi(\ul{W})=0$ 
for every 
$m\in\NN$
and
$\ul{W}\in\Omega_{sm}(\cR)$, 
where
\begin{align}
\label{eq:26Sep19d}
\varphi(\ul{X}):=\fo^{(L)}
\Delta_{\Delta_\cR}^{(\ell)}(\ul{X})
\fc^{(R)}
\Lambda_{\ul{\bA},\ul{Y}}
(\ul{X})-I_{Lm},
\end{align} 
$\fo^{(L)}$ 
and 
$\fc^{(R)}$
are constant matrices, 
$\Lambda_{\ul{\bA},\ul{Y}}$ 
is the generalized linear pencil centred at
$\ul{Y}$, 
and 
$\Delta_{\Delta_{\cR}}^{(\ell)}$ 
is a block matrix with entries of the form
$$\Delta^{\omega}\cR(I_m\otimes\ul{Y},
\ldots,I_m\otimes\ul{Y},
\ul{X},I_m\otimes\ul{Y},
\ldots,I_m\otimes\ul{Y})
(Z^1,\ldots,Z^{k_1+k_2}),$$
where
$Z^1,\ldots,Z^{k_1+k_2}\in\cE_{sm}$. 
However, from Theorem
\ref{thm:3Jun19a} we know that
$\cR(\ul{X})=\fR(\ul{X})$ 
for every 
$\ul{X}\in\Omega_{sm}(\cR)$ 
and also that 
$\Omega_{sm}(\cR)=
dom_{sm}(\fR)\subseteq 
edom^{st}_{sm}(\fR)$, 
thus 
$\cR(\ul{X})=
\fR\restriction_{edom^{st}(\fR)}
(\ul{X})=f(\ul{X})$ 
for every
$\ul{X}\in\Omega_{sm}(\cR)$ 
and
$m\in\NN$,  
therefore 
\begin{align}
\label{eq:12Nov19a}
\wt{\varphi}(\ul{X})=0
\end{align}
for every
$\ul{X}\in\Omega_{sm}(\cR)$, 
where
$$\wt{\varphi}(\ul{X})
=\fo^{(L)}
\Delta_{\Delta_f}^{(\ell)}(\ul{X})
\fc^{(R)}
\Lambda_{\ul{\bA},\ul{Y}}
(\ul{X})-I_{Lm}=
\begin{bmatrix}
\wt{\varphi}_{ij}(\ul{X})
\end{bmatrix}_{1\le i,j\le Lm}.$$
By applying Lemma
\ref{lem:30Oct19a}
with
$n=sm$
and
$\ul{Y}^1=\ldots
=\ul{Y}^{k_1+k_2}
=I_m\otimes\ul{Y}$, 
we know that
\begin{align*}
\Delta^{\omega} f
(I_m\otimes\ul{Y},
\ldots,I_m\otimes\ul{Y},
\ul{X},I_m\otimes\ul{Y},
\ldots,I_m\otimes\ul{Y})
(Z^1,\ldots,Z^{k_1+k_2})
\end{align*}
are all matrices of rational functions which 
are regular on
$edom^{st}_{sm}(\fR)$,
therefore
all 
$\wt{\varphi}_{ij}$ 
are
rational functions, regular on
$edom^{st}_{sm}(\fR)$, 
for every 
$1\le i,j\le Lm$.

 As seen in Theorem
\ref{thm:3Jun19a},
$\Omega_{sm}(\cR)=dom_{sm}(\fR)$ 
together with the trivial inclusion 
$dom_{sm}(\fR)\subseteq 
edom^{st}_{sm}(\fR)$, 
we know that 
$\Omega_{sm}(\cR)$
is a non-empty Zariski open set and hence  
Zariski dense in 
$edom_{sm}^{st}(\fR)$. 
Therefore, as all the entries 
$\wt{\varphi}_{ij}$
of 
$\wt{\varphi}$
are rational functions which are regular on
$edom^{st}_{sm}(\fR)$ 
and the equality in 
(\ref{eq:12Nov19a})
holds in 
$\Omega_{sm}(\cR)$ ---
which is Zariski dense subset of 
$edom^{st}_{sm}(\fR)$ --- 
it follows that 
(\ref{eq:12Nov19a}) 
holds for every
$\ul{X}\in edom^{st}_{sm}(\fR)$. 
To conclude, we showed that if 
$\ul{X}\in edom^{st}_{sm}(\fR)$, 
then 
$\Lambda_{\ul{\bA},\ul{Y}}(\ul{X})$ 
is invertible, i.e., that 
$edom^{st}_{sm}
(\fR)\subseteq 
\Omega_{sm}(\cR)$.

On the other hand, we know from
Theorem
\ref{thm:3Jun19a} 
that 
$$\Omega_{sm}(\cR)
=dom_{sm}(\fR)\subseteq 
edom^{st}_{sm}(\fR),$$
hence the equality in 
(\ref{eq:11No19c}).
\end{proof}

Finally, we show that the stable extended domain of a nc rational function
coincides with its domain of  regularity \textbf{on all levels}, hence coincide. Moreover,
at each level they can be identified with the domain of the realization,
i.e., 
the invertibility set 
$\Omega(\cR)$, up to some tensoring with the identity matrix.
\begin{cor}
\label{cor:10Nov19b}
Let
$\fR\in\KK\plangle\ul{x}\prangle$
and let 
$\cR$
be a minimal nc Fornasini--Marchesini realization
of $\fR$, centred at
$\ul{Y}\in dom_s(\fR)$. 
Then 
\begin{align}
\label{eq:11No19d}
edom^{st}_{n}(\fR)
=dom_{n}(\fR)=
\big\{ \ul{X}\in(\KK^{\ntn})^d:
I_s\otimes\ul{X}\in\Omega_{sn}(\cR)\big\}
\end{align}
for every $n\in\NN$. Moreover, we get the equality
\begin{align}
\label{eq:31Jan18b}
dom(\fR)
=edom^{st}(\fR).
\end{align}
\end{cor}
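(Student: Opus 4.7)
The plan is to reduce the general-$n$ statement to the case of levels divisible by $s$, which is already established in Theorem \ref{thm:31Jan18a}, by exploiting the built-in stability of $edom^{st}(\fR)$ together with the characterization of $dom_n(\fR)$ via $\Omega_{sn}(\cR)$ supplied by Theorem \ref{thm:3Jun19a}. Concretely, the rightmost equality
\[
dom_n(\fR)=\{\ul{X}\in(\KK^{\ntn})^d:I_s\otimes\ul{X}\in\Omega_{sn}(\cR)\}
\]
is precisely item 2 of Theorem \ref{thm:3Jun19a}, so nothing new needs to be proved there. The inclusion $dom_n(\fR)\subseteq edom^{st}_n(\fR)$ is immediate from (\ref{eq:11Nov19e}).

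The one substantive step is the reverse inclusion $edom^{st}_n(\fR)\subseteq dom_n(\fR)$. I would take $\ul{X}\in edom^{st}_n(\fR)$ and first promote this to stability of the inflated tuple $I_s\otimes\ul{X}$ at level $sn$: for every $m\in\NN$,
\[
I_m\otimes(I_s\otimes\ul{X})=I_{ms}\otimes\ul{X}\in edom_{msn}(\fR),
\]
because $\ul{X}\in edom^{st}_n(\fR)$ applied with $ms$ in place of $m$ gives exactly this membership; hence $I_s\otimes\ul{X}\in edom^{st}_{sn}(\fR)$. Since $sn$ is a multiple of $s$, Theorem \ref{thm:31Jan18a} applies and yields $I_s\otimes\ul{X}\in\Omega_{sn}(\cR)$. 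One final invocation of Theorem \ref{thm:3Jun19a}(2) then translates this back to $\ul{X}\in dom_n(\fR)$, closing the chain of inclusions.

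The "moreover" statement (\ref{eq:31Jan18b}) follows by taking the disjoint union over $n\in\NN$ of the equalities just obtained.

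I do not expect a genuine obstacle here: the real work has already been done in Section \ref{sec:la} and Section \ref{subsec:cal} (which feed Theorem \ref{thm:31Jan18a}) and in Theorem \ref{thm:3Jun19a}. The only point requiring a moment's care is the definitional step showing that stability of $\ul{X}$ at level $n$ implies stability of $I_s\otimes\ul{X}$ at level $sn$, and this is simply a rewriting of $I_{ms}\otimes\ul{X}$ as $I_m\otimes(I_s\otimes\ul{X})$.
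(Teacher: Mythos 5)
Your proposal is correct and follows essentially the same route as the paper: both reduce to Theorem \ref{thm:31Jan18a} at level $sn$ via Theorem \ref{thm:3Jun19a}(2) and the trivial inclusion $dom_n(\fR)\subseteq edom^{st}_n(\fR)$. The only cosmetic difference is that you obtain $I_s\otimes\ul{X}\in edom^{st}_{sn}(\fR)$ directly from the definition of the stable extended domain (rewriting $I_m\otimes(I_s\otimes\ul{X})=I_{ms}\otimes\ul{X}$), whereas the paper invokes closure of $edom^{st}(\fR)$ under direct sums; both are valid and equivalent in effect.
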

\begin{proof}
From Theorem
\ref{thm:3Jun19a} and the relations in
(\ref{eq:11Nov19e}), 
we know that
$$\big\{
\ul{X}\in(\KK^{\ntn})^d:I_s
\otimes\ul{X}\in\Omega_{sn}(\cR)\big\}
=dom_n(\fR)\subseteq edom^{st}_n(\fR).$$
On the other hand, if 
$\ul{X}\in edom^{st}_n(\fR)$, 
then as $edom^{st}(\fR)$ 
is closed under direct sums
$I_s\otimes\ul{X}\in edom^{st}_{sn}(\fR)$, 
while Theorem
\ref{thm:31Jan18a} 
implies 
$I_s\otimes\ul{X}\in\Omega_{sn}(\cR)$, as needed.
Finally, it follows that
$$edom^{st}(\fR)=\coprod_{n\in\NN}edom^{st}_n(\fR)
=\coprod_{n\in\NN}
dom_n(\fR)=dom(\fR).$$
\end{proof}
\begin{remark}
\label{rem:12Apr20a}
The definition of the extended domain in
\cite{KV1}
is allegedly different than the one in
\cite{V1}. By the discussion in
\cite{KV1}, the only suspected case in which the extended domains might be
different is when 
$\KK$ is a finite field, $dom_n(\fR)=\emptyset$ but $\fR$ can be evaluated
on $d-$tuples of
$\ntn$ generic matrices. 
This issue might be solved when moving to the stable extended domain, by
taking amplifications 
as described in the proof of
Corollary 
\ref{cor:10Nov19b}.
The idea is 
that if  one follows the definition of the extended domain from
\cite{KV1}, 
then the corresponding stable extended domain contains the domain of regularity
and
is closed under amplifications, so similarly to Corollary 
\ref{cor:10Nov19b} 
we know that it coincides with the domain of regularity of the function.
\end{remark}

\end{document}